\documentclass{amsart}
\usepackage{amssymb}
\usepackage{mathrsfs}
\usepackage{cases}
\usepackage{amsmath}
\usepackage{amsfonts}
\usepackage{pifont}
\usepackage{graphicx,amssymb,mathrsfs,amsmath}
\usepackage{tocvsec2}
\baselineskip 18pt

\vfuzz2pt 
\hfuzz2pt 
\newtheorem{thm}{Theorem}[section]

\newtheorem{prop}[thm]{Proposition}
\theoremstyle{definition}
\newtheorem{defn}[thm]{Definition}
\newtheorem{example}[thm]{Example}
\theoremstyle{remark}
\newtheorem{rem}[thm]{Remark}
\numberwithin{equation}{section}

\begin{document}
\title[Metrical approximations of functions]{Metrical approximations of functions}

\author{Belkacem Chaouchi}
\address{Lab. de l'Energie et des Syst\`{e}mes Intelligents,
Khemis Miliana University, 44225, Algeria}
\email{chaouchicukm@gmail.com}

\author{Marko Kosti\' c}
\address{Faculty of Technical Sciences,
University of Novi Sad,
Trg D. Obradovi\' ca 6, 21125 Novi Sad, Serbia}
\email{marco.s@verat.net}

\author{Daniel Velinov}
\address{Department for Mathematics, Faculty of Civil Engineering, Ss. Cyril and Methodius University, Skopje,
Partizanski Odredi
24, P.O. box 560, 1000 Skopje, N. Macedonia}
\email{velinovd@gf.ukim.edu.mk}

{\renewcommand{\thefootnote}{} \footnote{2010 {\it Mathematics
Subject Classification.} 42A75, 43A60, 47D99.
\\ \text{  }  \ \    {\it Key words and phrases.} 
Metrical approximations of functions by trigonometric polynomials, metrical approximations of functions by $\rho$-periodic type functions, abstract Volterra integro-differential equations.
\\  \text{  }  
This research is partially supported by grant 174024 of
Ministry of Science and Technological Development, Republic of Serbia and
Bilateral project between MANU and SANU.}}

\begin{abstract}
In this paper, we analyze metrical approximations of functions $F :\Lambda \times X \rightarrow Y$ by trigonometric polynomials and $\rho$-periodic type functions, where $\emptyset \neq \Lambda \subseteq {\mathbb R}^{n}$, $X$ and $Y$ are complex Banach spaces, and $\rho$ is a general binary relation on $Y$. 
Besides the classical concept, we analyze Stepanov, Weyl, Besicovitch and Doss generalized approaches to metrical approximations.
We clarify many structural properties of introduced spaces of functions and provide several applications of our theoretical results to
the abstract Volterra integro-differential equations and the partial differential equations.
\end{abstract}
\maketitle

\section{Introduction and preliminaries}

The notion of almost periodicity was introduced by the Danish mathematician H. Bohr around 1924-1926 and later generalized by many others (see the research monographs \cite{besik}, \cite{diagana}, \cite{fink}, \cite{gaston}, \cite{nova-mono}, \cite{nova-selected}, \cite{188}, \cite{levitan}, \cite{pankov} and \cite{30} for further information concerning almost periodic functions and their applications).
Suppose that $(X,\| \cdot \|)$ is a complex Banach space, and $F : {\mathbb R}^{n} \rightarrow X$ is a continuous function, where $n\in {\mathbb N}$. Then we say that $F(\cdot)$ is almost periodic if and only if for each $\epsilon>0$
there exists a finite real number $l>0$ such that for each ${\bf t}_{0} \in {\mathbb R}^{n}$ there exists ${\bf \tau} \in B({\bf t}_{0},l)\equiv \{ {\bf t} \in {\mathbb R}^{n} : |{\bf t}-{\bf t}_{0}|\leq l\}$ such that
\begin{align*}
\bigl\|F({\bf t}+{\bf \tau})-F({\bf t})\bigr\| \leq \epsilon,\quad {\bf t}\in {\mathbb R}^{n};
\end{align*}
here, $|\cdot -\cdot|$ denotes the Euclidean distance in ${\mathbb R}^{n}.$
Equivalently, $F(\cdot)$ is almost periodic if and only if for any sequence $({\bf b}_k)$ in ${\mathbb R}^{n}$ there exists a subsequence $({\bf a}_{k})$ of $({\bf b}_k)$
such that the sequence of translations $(F(\cdot+{\bf a}_{k}))$ converges in $C_{b}({\mathbb R}^{n}: X),$ the Banach space of all bounded continuous functions on ${\mathbb R}^{n},$ equipped with the sup-norm. Let us recall that any trigonometric polynomial in ${\mathbb R}^{n}$ is almost periodic, as well as that a continuous function $F(\cdot)$ is almost periodic if and only if there exists a sequence of trigonometric polynomials in ${\mathbb R}^{n}$ which converges uniformly to $F(\cdot)$; see also \cite{marko-manuel-ap,rho,ejmaa-2022} for some recent results about the multi-dimensional almost periodic type functions.

The first systematic study of metrical almost periodicity was conducted by the second named author in 2021 (\cite{metrical}). The Stepanov, Weyl and Besicovitch classes of metrical almost periodic functions were considered in \cite{metrical-stepanov}, \cite{metrical-weyl} and \cite{multi-besik}, respectively. As already mentioned in the abstract, the main aim of this research study is to consider the metrical approximations of functions $F :\Lambda \times X \rightarrow Y$ by trigonometric polynomials and $\rho$-periodic type functions, where $\emptyset \neq \Lambda \subseteq {\mathbb R}^{n}$, $X$ and $Y$ are complex Banach spaces and $\rho$ is a general binary relation on $Y$ (it would be very difficult to summarize here all relevant results concerning function spaces obtained by the closures of the set of trigonometric polynomials in certain norms; see, e.g., the investigations of A. Oliaro, L. Rodino, P. Wahlberg \cite {oliaro} and M. A. Shubin \cite{shubin32} for some non-trivial results established in this direction). 
We also investigate the generalized Stepanov, Weyl, Besicovitch and Doss approaches to the metrical approximations of functions, and provide certain applications to
the abstract Volterra integro-differential equations. Besides many other novelties of this work, we would like to emphasize here that this is probably the first research article which examines the notion of metrical semi-periodicity, even in the one-dimensional framework.

The organization and main ideas of this research article can be briefly described as follows. After collecting some preliminary material, we introduce the notion of strong $(\phi,{\mathbb F},{\mathcal B},{\mathcal P})$-almost periodicity (semi-$(\phi,\rho,{\mathbb F},{\mathcal B},{\mathcal P})$-periodicity, semi-$(\phi,\rho_{j},{\mathbb F},{\mathcal B},{\mathcal P})_{j\in {\mathbb N}_{n}}$-periodicity)  in Definition \ref{strong-app}. The main purpose of Proposition \ref{kimih} is to consider the compositions of functions introduced here with the Lipschitz type functions. After that, in Example \ref{ferplay}, we provide many engaging examples justifying the introduction of  function spaces under our consideration (although the main aim of this paper is to create the abstract theory of metrical approximations of functions by trigonometric polynomials and $\rho$-periodic type functions, there are many places where we consider some special pseudometric spaces and specify our general notion; see also Example \ref{kakad321} and Example \ref{kakad} below).

Subsection \ref{zeljo} is devoted to the study of metrical normality and metrical Bohr type definitions. The notion of $(\phi,{\mathrm R}, {\mathcal B},{\mathbb F},{\mathcal P})$-normality is introduced in Definition \ref{dfggg-met} and later analyzed in Proposition \ref{asad}. 
It is worth noting that the notion of $(\phi,{\mathrm R}, {\mathcal B},{\mathbb F},{\mathcal P})$-normality generalizes many other notions of normality known in the existing literature (see, e.g., \cite[Definition 3.3, 4.2, 4.5, 5.17]{deda} for the one-dimensional setting, and \cite[Subsection 6.3.1]{nova-selected} for the multi-dimensional setting).
Definition \ref{nafaks-met}
introduces the notions of the Bohr $(\phi,{\mathbb F},{\mathcal B},\Lambda',\rho,{\mathcal P})$-almost periodicity and the $(\phi,{\mathbb F},{\mathcal B},\Lambda',\rho,{\mathcal P})$-uniform recurrence. 
The main aim of Proposition \ref{rep} is to indicate that the notion of $(\phi,{\mathrm R}, {\mathcal B},\phi,{\mathbb F},{\mathcal P})$-normality is very general as well as that any function belonging to the introduced function space 
$e-({\mathcal B},\phi,{\mathbb F})-B^{{\mathcal P}_{\cdot}}(\Lambda \times X : Y)$ [$e-({\mathcal B},\phi,{\mathbb F})_{j\in {\mathbb N}_{n}}-B^{{\mathcal P}_{\cdot}}_{{\rm I}}(\Lambda \times X : Y)$] is
$(\phi,{\mathrm R}, {\mathcal B},\phi,{\mathbb F},{\mathcal P})$-normal under certain logical assumptions (cf. also Proposition \ref{nbm}, where we reconsider the statements of \cite[Proposition 3.7, Corollary 3.8]{metrical} in our new framework). 

Without going into full description of introduced definitions and established results in Section \ref{general-stepa}, we will only emphasize that we investigate the generalized 
Stepanov, Weyl, Besicovitch and Doss concepts
to the metrical approximations of functions here. The Stepanov and Weyl concepts are investigated in Subsection \ref{anadol}, while the Besicovitch and Doss concepts are investigated in Subsection \ref{jet}. Several theoretical results 
about the convolution invariance of function spaces introduced in this paper, 
and many new
applications to the abstract Volterra integro-differential equations are given in Section \ref{maref} (the actions of infinite convolution products are specifically analyzed in Subsection \ref{seka}).
The final section of paper is reserved for the concluding remarks and observations about the introduced classes of functions.
\vspace{1.6pt}

\noindent {\bf Notation and terminology.} Suppose that $X,\ Y,\ Z$ and $ T$ are given non-empty sets. Let us recall that a binary relation between $X$ into $Y$
is any subset
$\rho \subseteq X \times Y.$ 
If $\rho \subseteq X\times Y$ and $\sigma \subseteq Z\times T$ with $Y \cap Z \neq \emptyset,$ then
we define
$\sigma \cdot  \rho =\sigma \circ \rho \subseteq X\times T$ by
$$
\sigma \circ \rho :=\bigl\{(x,t) \in X\times T : \exists y\in Y \cap Z\mbox{ such that }(x,y)\in \rho\mbox{ and }
(y,t)\in \sigma \bigr\}.
$$
As is well known, the domain and range of $\rho$ are defined by $D(\rho):=\{x\in X :
\exists y\in Y\mbox{ such that }(x,y)\in X\times Y \}$ and $R(\rho):=\{y\in Y :
\exists x\in X\mbox{ such that }(x,y)\in X\times Y\},$ respectively; $\rho (x):=\{y\in Y : (x,y)\in \rho\}$ ($x\in X$), $ x\ \rho \ y \Leftrightarrow (x,y)\in \rho .$
If $\rho$ is a binary relation on $X$ and $n\in {\mathbb N},$ then we define $\rho^{n}
$ inductively. Set $\rho (X'):=\{y : y\in \rho(x)\mbox{ for some }x\in X'\}$ ($X'\subseteq X$).

We will always assume henceforth that $(X,\| \cdot \|)$, $(Y, \|\cdot\|_Y)$ and $(Z, \|\cdot\|_Z)$ are three complex Banach spaces, $n\in {\mathbb N},$ $\emptyset  \neq \Lambda \subseteq {\mathbb R}^{n},$ and
${\mathcal B}$ is a non-empty collection of non-empty subsets of $X$ satisfying
that
for each $x\in X$ there exists $B\in {\mathcal B}$ such that $x\in B.$ For the sequel, we set
$$
\Lambda'':=\bigl\{ \tau \in {\mathbb R}^{n} : \tau +\Lambda \subseteq \Lambda \bigr\}.
$$
By
$L(X,Y)$ we denote the Banach space of all bounded linear operators from $X$ into
$Y,$ $L(X,X)\equiv L(X);$ ${\mathrm I}$ denotes the identity operator on $Y.$ The Lebesgue measure in ${\mathbb R}^{n}$ is denoted by $m(\cdot),$ and
the Wright function of order $\gamma \in (0,1)$ is denoted by
$\Phi_{\gamma}(\cdot);$ see, e.g., \cite{nova-mono} for the notion. 
Define 
${\mathbb N}_{n}:=\{1,..., n\};$ the standard basis of ${\mathbb R}^{n}$ is denoted by $(e_{1},...,e_{n}).$ If ${\mathrm A}$ and ${\mathrm B}$ are non-empty sets, then we define ${\mathrm B}^{{\mathrm A}}:=\{ f | f : {\mathrm A} \rightarrow {\mathrm B}\}.$ By $\| \cdot \|_{\infty}$ we denote the sup-norm; set also $\lceil s\rceil:=\inf\{ k\in {\mathbb Z} : s\leq k\}$ and 
$\lfloor s\rfloor:=\sup\{ k\in {\mathbb Z} : s\geq k\}$
for any $s\in {\mathbb R}.$

In this paper, we use the weighted function spaces (for further information concerning the Lebesgue spaces with variable exponents
$L^{p(x)},$ we refer the reader to \cite{variable}, \cite{nova-selected} and references cited therein).
Suppose first that the set $\Lambda$ is Lebesgue measurable and
$\nu : \Lambda \rightarrow (0,\infty)$ is a Lebesgue measurable function. By $ {\mathcal P}(\Lambda)$  we denote the space of all Lebesgue measurable functions from $\Lambda$ into $[1,+\infty]$.  In our analysis, we employ the following Banach space
$$
L^{p({\bf t})}_{\nu}(\Lambda: Y):=\bigl\{ u : \Lambda \rightarrow Y \ ; \ u(\cdot) \mbox{ is measurable and } ||u||_{p({\bf t})} <\infty \bigr\},
$$
where $p\in {\mathcal P}(\Lambda)$ and 
$$
\bigl\|u\bigr\|_{p({\bf t})}:=\bigl\| u({\bf t})\nu({\bf t}) \bigr\|_{L^{p({\bf t})}(\Lambda:Y)}.
$$
Suppose now that $\nu : \Lambda\rightarrow (0,\infty)$ is an arbitrary function satisfying that the function $1/\nu(\cdot)$ is locally bounded. Then the vector space $C_{0,\nu}(\Lambda : Y)$ [$C_{b,\nu}(\Lambda : Y)$] consists of all continuous functions $u : \Lambda \rightarrow
Y$ such that $\lim_{|{\bf t}|\rightarrow \infty , {\bf t}\in \Lambda}
\|u({\bf t})\|_{Y}\nu({\bf t})=0$ [$\sup_{{\bf t}\in \Lambda}\|u({\bf t})\|_{Y}\nu({\bf t})<+\infty$]. Equipped with the norm
$\|\cdot\|:=\sup _{{\bf t}\in \Lambda}\|\cdot({\bf t})\nu({\bf t})\|_{Y},$ $C_{0,\nu}(\Lambda : Y)$ [$C_{b,\nu}(\Lambda : Y)$] is a Banach space. Albeit it may look a little bit redundant, we have decided to repeat certain conditions on the function $\phi(\cdot)$ sometimes for the sake of better readability.

Finally, we need to recall the following notion:

\begin{defn}\label{drasko-presing}
\begin{itemize}
\item[(i)]
Let ${\bf \omega}\in {\mathbb R}^{n} \setminus \{0\},$ $\rho$ be a binary relation on $Y$ 
and 
${\bf \omega}\in \Lambda''$. A continuous
function $F:\Lambda \times X\rightarrow Y$ is said to be $({\bf \omega},\rho)$-periodic [$\rho$-periodic] if and only if 
$
F({\bf t}+{\bf \omega};x)\in \rho(F({\bf t};x)),$ ${\bf t}\in \Lambda,$ $x\in X$ [there exists ${\bf \omega}\in ({\mathbb R}^{n} \setminus \{0\}) \cap \Lambda''$ such that $F(\cdot;\cdot)$ is $({\bf \omega},\rho)$-periodic]. 
\item[(ii)]
Let ${\bf \omega}_{j}\in {\mathbb R} \setminus \{0\},$ $\rho_{j}\in {\mathbb C} \setminus \{0\}$ be a binary relation on $Y$
and 
${\bf \omega}_{j}e_{j}+\Lambda \subseteq \Lambda$ ($1\leq j\leq n$). A continuous
function $F:\Lambda \times X \rightarrow Y$ is said to be $({\bf \omega}_{j},\rho_{j})_{j\in {\mathbb N}_{n}}$-periodic [$(\rho_{j})_{j\in {\mathbb N}_{n}}$-periodic] if and only if 
$
F({\bf t}+{\bf \omega}_{j}e_{j};x)\in \rho_{j}(F({\bf t};x)),$ ${\bf t}\in \Lambda,
$ $x\in X,$ $j\in {\mathbb N}_{n}$ [there exist non-zero real numbers $\omega_{j}$ such that ${\bf \omega}_{j}e_{j}\in \Lambda''$ for all $j\in {\mathbb N}_{n}$ and $F(\cdot;\cdot)$ is $({\bf \omega}_{j},\rho_{j})_{j\in {\mathbb N}_{n}}$-periodic].
\item[(iii)] Let ${\bf \omega}_{j}\in {\mathbb R} \setminus \{0\},$ 
and 
${\bf \omega}_{j}e_{j}+\Lambda \subseteq \Lambda$ ($1\leq j\leq n$). A continuous
function $F:\Lambda \times X \rightarrow Y$ is said to be periodic if and only if $F(\cdot;\cdot)$ is $(\rho_{j})_{j\in {\mathbb N}_{n}}$-periodic with $\rho_{j}={\rm I}$ for all $j\in {\mathbb N}_{n}.$
\end{itemize}
\end{defn} \index{function!$({\bf \omega}_{j},\rho_{j})_{j\in {\mathbb N}_{n}}$-periodic}

By a trigonometric polynomial $P : \Lambda \times X \rightarrow Y$ we mean any linear combination of functions like
\begin{align*}
e^{i[\lambda_{1}t_{1}+\lambda_{2}t_{2}+\cdot \cdot \cdot +\lambda_{n}t_{n}]}c(x),
\end{align*}
where $\lambda_{i}$ are real numbers ($1\leq i \leq n$) and $c: X \rightarrow Y$ is a continuous mapping.

\section{Metrical approximations: the main concept}\label{docolord}

In this section, we assume that $\emptyset \neq \Lambda \subseteq {\mathbb R}^{n},$ 
$\phi : [0,\infty) \rightarrow [0,\infty),$ ${\mathbb F} : \Lambda \rightarrow (0,\infty)$,
$P \subseteq [0,\infty)^{\Lambda},$ the space of all functions from $\Lambda$ into $[0,\infty),$  the zero function belongs to $P$, and 
${\mathcal P}=(P,d)$ is a pseudometric space.

We start by introducing the following notion:

\begin{defn}\label{strong-app} 
Suppose that $\emptyset  \neq \Lambda \subseteq {\mathbb R}^{n}$ and $F :\Lambda  \times X \rightarrow Y.$
Then we say that $F(\cdot;\cdot)$ is strongly $(\phi,{\mathbb F},{\mathcal B},{\mathcal P})$-almost periodic (semi-$(\phi,\rho,{\mathbb F},{\mathcal B},{\mathcal P})$-periodic, semi-$(\phi,\rho_{j},{\mathbb F},{\mathcal B},{\mathcal P})_{j\in {\mathbb N}_{n}}$-periodic) if and only if for each $B\in {\mathcal B}$ there exists a sequence $(P_{k}^{B}({\bf t};x))$ of trigonometric polynomials ($\rho$-periodic functions, $(\rho_{j})_{j\in {\mathbb N}_{n}}$-periodic functions)
such that 
\begin{align}\label{gnarbi}
\lim_{k\rightarrow +\infty}\sup_{x\in B} \Bigl\|{\mathbb F}(\cdot) \phi\Bigl(\bigl\|P_{k}^{B}(\cdot;x)-F(\cdot;x)\bigr\|_{Y}\Bigr)\Bigr\|_{P}=0.
\end{align}
\end{defn}\index{function!strongly $(\phi,{\mathbb F},{\mathcal B},{\mathcal P})$-almost periodic} \index{function!semi-$(\phi,\rho,{\mathbb F},{\mathcal B},{\mathcal P})$-periodic}\index{function!semi-$(\phi,\rho_{j},{\mathbb F},{\mathcal B},{\mathcal P})_{j\in {\mathbb N}_{n}}$-periodic}

The usual notion of a strongly ${\mathcal B}$-almost periodic function $F : \Lambda \rightarrow Y$ (semi-$(\rho_{j})_{j\in {\mathbb N}_{n}}$-periodic function $F : \Lambda \rightarrow Y$) is obtained by plugging ${\mathbb F}(\cdot)\equiv 1,$ $\phi(x) \equiv x$ and $P=C_{b}(\Lambda :Y).$ 

Our first result reads as follows:

\begin{prop}\label{kimih}
Suppose that $\emptyset  \neq \Lambda \subseteq {\mathbb R}^{n}$, $F :\Lambda  \times X \rightarrow Y$, $h : Y \rightarrow Z$ is Lipschitz continuous, $\phi(\cdot)$ is monotonically increasing and there exists a function $\varphi : [0,\infty) \rightarrow [0,\infty)$ such that $\phi(xy)\leq \varphi(x)\phi(y)$ for all $x,\ y\geq 0.$ Let the assumption \emph{(C1)} hold, where:\index{condition!(C1)}\index{condition!(C2)}
\begin{itemize}
\item[(C1)]
If
$f\in P,$ then $d'f\in P$ for all reals $d'\geq 0,$ and there exists a finite real constant $d>0$ such that $\|d'f\|_{P}\leq d(1+d')\|f\|_{P}$ for all reals $d'\geq 0$ and all functions $f\in P.$
\end{itemize}
Then we have the following:
\begin{itemize}
\item[(i)]
Suppose that $F(\cdot;\cdot)$ is semi-$(\phi,\rho,{\mathbb F},{\mathcal B},{\mathcal P})$-periodic (semi-$(\phi,\rho_{j},{\mathbb F},{\mathcal B},{\mathcal P})_{j\in {\mathbb N}_{n}}$-periodic), and
$h\circ \rho \subseteq \rho \circ h$ ($h\circ \rho_{j} \subseteq \rho_{j} \circ h$ for $1\leq j\leq n$).
Then the function $h\circ F : \Lambda \times X \rightarrow Y$ is likewise semi-$(\phi,\rho,{\mathbb F},{\mathcal B},{\mathcal P})$-periodic (semi-$(\phi,\rho_{j},{\mathbb F},{\mathcal B},{\mathcal P})_{j\in {\mathbb N}_{n}}$-periodic). 
\item[(ii)] Suppose that $X=\{0\}$,  there exists a finite real constant $c>0$ such that $\phi(x+y)\leq c[\varphi(x)+\varphi(y)]$ for all $x,\ y\geq 0,$ $\phi(\cdot)$ is continuous at the point zero, 
\begin{align}\label{mitko}
{\mathbb F}\in P\ \ \mbox{ and }\lim_{\epsilon \rightarrow 0+}\|\epsilon {\mathbb F}(\cdot)\|_{P}=0,
\end{align}
the function $F(\cdot)$ is strongly $(\phi,{\mathbb F},{\mathcal B},{\mathcal P})$-almost periodic and 
the assumption \emph{(C2)} holds, where:
\begin{itemize}
\item[(C2)]
There exists a finite real constant $e>0$ such that the assumptions $f,\ g\in P$ and $0\leq w\leq d'[f+g]$ for some finite real constant $d'>0$ imply $w\in P$ and $\| w\|_{P}\leq e(1+d')[\|f\|_{P}+\|g\|_{P}].$ 
\end{itemize}
Then the function $(h\circ F)(\cdot)$ is likewise strongly $(\phi,{\mathbb F},{\mathcal B},{\mathcal P})$-almost periodic.
\end{itemize}
\end{prop}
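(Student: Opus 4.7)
The strategy is to take $Q_k^B := h \circ P_k^B$ as the approximants in part (i), and in part (ii) to reapproximate the composition $h \circ P_k$ (which is typically no longer a trigonometric polynomial) by genuine trigonometric polynomials via Bohr's approximation theorem.

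For (i), fix $B \in \mathcal{B}$ and let $(P_k^B)$ be the given $\rho$-periodic approximants of $F$. Setting $Q_k^B := h \circ P_k^B$, the inclusion $h \circ \rho \subseteq \rho \circ h$ transfers $\rho$-periodicity directly: from $P_k^B({\bf t}+{\bf \omega};x)\in \rho(P_k^B({\bf t};x))$, applying $h$ yields $h(P_k^B({\bf t}+{\bf \omega};x))\in (h\circ \rho)(P_k^B({\bf t};x)) \subseteq \rho(h(P_k^B({\bf t};x)))$, and the coordinate-wise case $(\rho_j)_{j\in \mathbb{N}_n}$ is identical. For the metrical estimate, Lipschitz continuity of $h$ (with constant $L$), monotonicity of $\phi$, and $\phi(Lx)\leq \varphi(L)\phi(x)$ yield the pointwise bound
$$\mathbb{F}({\bf t})\phi\bigl(\|h(P_k^B({\bf t};x))-h(F({\bf t};x))\|_Z\bigr)\leq \varphi(L)\,\mathbb{F}({\bf t})\phi\bigl(\|P_k^B({\bf t};x)-F({\bf t};x)\|_Y\bigr).$$
Condition (C1) then bounds the $P$-norm of the left side by $d(1+\varphi(L))$ times the $P$-norm of the right, which tends to zero uniformly in $x \in B$ by the given approximation property.

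For (ii), since $X=\{0\}$, fix trigonometric polynomials $(P_k)$ approximating $F$ in the strong $(\phi,\mathbb{F},\mathcal{B},\mathcal{P})$-almost periodic sense. The key step is that each $h \circ P_k : \mathbb{R}^n \to Z$ is classically almost periodic: trigonometric polynomials are almost periodic, and post-composition with a uniformly continuous (Lipschitz) map preserves almost periodicity. Bohr's approximation theorem then supplies a trigonometric polynomial $Q_k: \mathbb{R}^n \to Z$ with $\sup_{{\bf t}\in \mathbb{R}^n}\|h(P_k({\bf t}))-Q_k({\bf t})\|_Z \leq 1/k$; these serve as the candidate approximants for $h \circ F$. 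Combining the triangle inequality $\|h(F({\bf t}))-Q_k({\bf t})\|_Z \leq L\|F({\bf t})-P_k({\bf t})\|_Y + 1/k$ with monotonicity of $\phi$ and the two functional inequalities $\phi(x+y)\leq c[\varphi(x)+\varphi(y)]$ and $\phi(xy)\leq \varphi(x)\phi(y)$ produces a pointwise majorant of the form
$$\mathbb{F}({\bf t})\phi\bigl(\|h(F({\bf t}))-Q_k({\bf t})\|_Z\bigr)\leq A\,\mathbb{F}({\bf t})\phi\bigl(\|F({\bf t})-P_k({\bf t})\|_Y\bigr)+B_k\,\mathbb{F}({\bf t}),$$
with $A$ a finite constant and $B_k \to 0$ as $k\to \infty$. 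Condition (C2) splits the corresponding $P$-norm into two summands: the first tends to zero by strong almost periodicity of $F$ together with (C1) absorbing $A$, and the second by (\ref{mitko}) once $B_k \to 0$ (which follows from continuity of $\phi$ at zero together with the fact that $\phi(0)=0$ whenever $\|\mathbb{F}\|_P>0$, a consequence of the definition of strong almost periodicity).

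The main obstacle is the careful coordination of the inequalities $\phi(x+y)\leq c[\varphi(x)+\varphi(y)]$ and $\phi(xy)\leq \varphi(x)\phi(y)$ so that every pointwise majorant is expressible in terms of quantities already known to lie in $P$, namely $\mathbb{F}\phi(\|F-P_k\|_Y)$ and $\mathbb{F}$ itself. A clean shortcut is to exploit $a+b\leq 2\max(a,b)$ together with the multiplicative bound to derive $\phi(a+b)\leq \varphi(2)[\phi(a)+\phi(b)]$, which keeps $\phi$ (rather than $\varphi$) on the right-hand side and meshes directly with the hypothesis on $F$; one then uses (C1) to absorb the multiplicative constants generated by $\varphi(L)$ and $\varphi(2)$ and (\ref{mitko}) to handle the residual scaling of $\mathbb{F}$.
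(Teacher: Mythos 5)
Your proposal is correct and follows essentially the same route as the paper's proof: part (i) via $Q_k^B:=h\circ P_k^B$ with the Lipschitz/monotonicity/$\varphi(L)$ pointwise bound absorbed by (C1), and part (ii) by observing that $h$ composed with a trigonometric polynomial is classically almost periodic, re-approximating it by genuine trigonometric polynomials, and then splitting via (C2), with the residual term handled by \eqref{mitko} and the continuity of $\phi$ at zero. The only differences are cosmetic (your diagonal indexing of $Q_k$ versus the paper's fixed $P$ and $\epsilon'$, and your $\varphi(2)$ shortcut for subadditivity), so no further commentary is needed.
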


\begin{proof}
In order to prove (i), observe first that, if $P:\Lambda \times X\rightarrow Y$ is $({\bf \omega},\rho)$-periodic ($({\bf \omega}_{j},\rho_{j})_{j\in {\mathbb N}_{n}}$-periodic), then the function $h\circ P : \Lambda \times X\rightarrow Z$ is likewise $({\bf \omega},\rho)$-periodic ($({\bf \omega}_{j},\rho_{j})_{j\in {\mathbb N}_{n}}$-periodic) since we have assumed that $h\circ \rho \subseteq \rho \circ h$ ($h\circ \rho_{j} \subseteq \rho_{j} \circ h$ for $1\leq j\leq n$). Let $L>0$ be the Lipschitzian constant of mapping $h(\cdot).$ Then the required statement simply follows from our assumptions on the function $\phi(\cdot),$ the pseudometric space $P$ and the subsequent computation (the set $B\in {\mathcal B}$ is given in advance):
\begin{align*}
 \Bigl\|{\mathbb F}(\cdot)& \phi\Bigl(\bigl\|\bigl(h\circ P_{k}^{B}\bigr)(\cdot;x)-\bigl( h\circ F(\cdot;x)\bigr)\bigr\|_{Y}\Bigr)\Bigr\|_{P}
\\& \leq  \Bigl\|{\mathbb F}(\cdot) \phi\Bigl(L\bigl\|P_{k}^{B}(\cdot;x)-F(\cdot;x)\bigr\|_{Y}\Bigr)\Bigr\|_{P}
\\& \leq \Bigl\|{\mathbb F}(\cdot) \varphi(L)\phi\Bigl(\bigl\|P_{k}^{B}(\cdot;x)-F(\cdot;x)\bigr\|_{Y}\Bigr)\Bigr\|_{P}
\\& \leq d(1+\varphi(L))\Bigl\|{\mathbb F}(\cdot) \phi\Bigl(\bigl\|P_{k}^{B}(\cdot;x)-F(\cdot;x)\bigr\|_{Y}\Bigr)\Bigr\|_{P}.
\end{align*}
To prove (ii), observe first that the function $(h\circ P)(\cdot)$ is strongly almost periodic for every trigonometric polynomial $P(\cdot);$ hence, there exists a sequence $(P_{k})$ of trigonometric polynomials such that $\lim_{k\rightarrow \infty}\|P_{k}-(h\circ P)\|_{\infty}=0$ (\cite{nova-selected}). After that, we can apply the argumentation used for proving (i), the additional assumptions given and the next computation:{\small
\begin{align*}
& \Bigl\|{\mathbb F}(\cdot) \phi\Bigl(\bigl\|P_{k}(\cdot)-\bigl( h\circ F\bigr)(\cdot)\bigr\|_{Y}\Bigr)\Bigr\|_{P}
\\& \leq e(1+c)\Biggl[\Bigl\|{\mathbb F}(\cdot) \phi\Bigl(\bigl\|P_{k}(\cdot)-\bigl( h\circ P\bigr)(\cdot)\bigr\|_{Y}\Bigr)\Bigr\|_{P}+
\Bigl\|{\mathbb F}(\cdot) \phi\Bigl(\bigl\|\bigl( h\circ P\bigr)(\cdot)-\bigl( h\circ F\bigr)(\cdot)\bigr\|_{Y}\Bigr)\Bigr\|_{P}\Biggr]
\\& \leq e(1+c)\Biggl[\Bigl\|{\mathbb F}(\cdot) \phi\bigl(\epsilon'\bigr)\Bigr\|_{P}+d(1+\varphi(L))
\Bigl\|{\mathbb F}(\cdot) \phi\Bigl(\bigl\|P(\cdot)- F(\cdot)\bigr\|_{Y}\Bigr)\Bigr\|_{P}\Biggr];
\end{align*}}
here, we first choose a finite real number $\epsilon_{0}>0$ such that $\|x {\mathbb F}(\cdot)\|_{P}<\epsilon/(2e(1+c))$ for $0<x<\epsilon_{0}$ (cf. \eqref{mitko}),
and after that we choose a finite real number
$\epsilon'>0$ such that $ \phi(\epsilon')<\epsilon_{0}$ due to the continuity of function $\phi(\cdot)$ at the point zero.
\end{proof}

Without going into further details, we will only note here that Proposition \ref{kimih} can be reformulated for all other classes of functions analyzed in this paper. Condition (C1) holds if $P$ is a Banach space or $P$ is a Fr\' echet space and $d(\cdot;\cdot)$ is a metric induced by the fundamental system of increasing seminorms which defines the topology of $P;$ note also that condition \eqref{mitko} does not hold in general metric spaces $P$: for example, if $d(\cdot;\cdot)$ is a
discrete unit metric on $P,$ defined by $d(f,g):=0$ if and only if $f=g,$ and $d(f,g):=1,$ otherwise, then \eqref{mitko} does not hold. With this choice of metric space $P$, we have that a function $F(\cdot;\cdot)$ is strongly $(x,1,{\mathcal B},{\mathcal P})$-almost periodic (semi-$(x,\rho,1,{\mathcal B},{\mathcal P})$-periodic, semi-$(x,\rho_{j},1,{\mathcal B},{\mathcal P})_{j\in {\mathbb N}_{n}}$-periodic) if and only if $F(\cdot;\cdot)$ is a trigonometric polynomial ($\rho$-periodic function, $(\rho_{j})_{j\in {\mathbb N}_{n}}$-periodic function).

We continue by providing several illustrative examples:

\begin{example}\label{ferplay}
\begin{itemize}
\item[(i)] The spaces of Bohr almost periodic functions (semi-$(c_{j})_{j\in {\mathbb N}_{n}}$-periodic functions; cf. \cite[Subsection 2.1]{brno} and \cite[Section 2]{chaouchi}) $F: {\mathbb R}^{n} \rightarrow Y$ can be constricted if we use the pseudometric spaces ${\mathcal P}$ such that ${\mathcal P}$ is continuously embedded into the space $C_{b}({\mathbb R}^{n} : Y)$. The obvious choice is the space $C_{b,\nu}({\mathbb R}^{n} : Y)$, where the function $1/\nu(\cdot)$ is locally bounded
and
$\nu(x)\geq c,$ $x\in {\mathbb R}^{n}$ for some positive real number $c>0$. 

Concerning the choice of this space, we will present the following example appearing in \cite{oliaro}. If $\emptyset \neq \Omega \subseteq {\mathbb R}^{n}$, then the space of all Gevrey functions of order $s\geq 1,$ denoted by $G^{s}(\Omega),$ is defined as a collection of all infinitely differentiable functions $F : {\mathbb R}^{n}\rightarrow {\mathbb C}$ such that for each compact set $K\subseteq {\mathbb R}^{n}$ there exists a finite real constant $C_{K}>0$ such that $$|D^{\alpha}F({\bf t})| \leq C_{K}^{1+|\alpha|}\alpha!^{s}$$ for all ${\bf t}\in K$ and $\alpha \in {\mathbb N}_{0}^{n}.$ It is natural to ask whether an almost periodic function $F : {\mathbb R}^{n}\rightarrow {\mathbb C}$ which belongs to the space $G^{s}(\Omega)$ obeys the property of the existence of a global real constant $C>0$ such that  $$|D^{\alpha}F({\bf t})| \leq C^{1+|\alpha|}\alpha!^{s}$$ for all ${\bf t}\in {\mathbb R}^{n}$ and $\alpha \in {\mathbb N}_{0}^{n}?$
An instructive counterexample in the one-dimensional setting, with $s>1,$ is given in \cite[Example 2.1]{oliaro}, showing that this is not true in general. We will reexamine this example for our purposes. 

Set $g_{s}(x):=\exp(-x^{1/(1-s)}),$ $x>0,$ $g_{s}(x):=0,$ $x\leq 0,$
$\psi_{s}(x):=g_{s}(x)g_{s}(1-x),$ $x\in {\mathbb R},$ $\psi_{s,n}(x):=\psi_{s}(nx),$ $x\in {\mathbb R}$ and $\varphi_{s,n}(x):=\sum_{k\in {\mathbb Z}}\psi_{s}(x-2^{n}(2k+1)), $ $x\in {\mathbb R}$ ($n\in {\mathbb N}$). It has been shown that the function
$$
F_{s}(x):=\sum_{k=1}^{\infty}k^{-1/4}\varphi_{s,k}(x),\quad x\in {\mathbb R}
$$
is well defined, as well as that the above series is uniformly convergent in the variable $x\in {\mathbb R},$ so that the function $F_{s}(\cdot)$ is actually, semi-periodic, since the function $\varphi_{s,n}(\cdot)$ is of period $2^{n+1}$ ($n\in {\mathbb N}$). Moreover, it has been shown that for each $x\in {\mathbb R}$ and $N\in {\mathbb N}$ we have the existence of an integer $n_{0}>N$ such that 
\begin{align}\label{tacan}
\sum_{k>N}n^{-1/4}\varphi_{s,k}(x)=n_{0}^{-1/4}\varphi_{s,n_{0}}(x)\leq n_{0}^{-1/4}\leq (1+N)^{-1/4}.
\end{align}
Let $(x_{n_{0}})$ be any strictly increasing sequence tending to plus infinity and satisfying that $x_{n_{0}}\in \bigcup_{k\in {\mathbb Z}}([0,n_{0}^{-1}] +2^{n_{0}}(1+2k)).$ 
Define the function $\nu : {\mathbb R}\rightarrow (0,\infty)$ by $\nu(x):=1,$ if $x\notin \{x_{n_{0}} : n_{0}\in {\mathbb N}\}$ and $\nu(x_{n_{0}}):=n_{0}^{1/8}.$ The use of \eqref{tacan} implies that the function $F_{s}(\cdot)$ is semi-$(x,{\rm I},1,{\mathcal P})$-periodic with $P=C_{b,\nu}({\mathbb R} : [0,\infty)).$

It is worth noting that we can also use the metric spaces ${\mathcal P}$ equipped with the distance of the form 
\begin{align}\label{1258}
d(f,g)=\|f-g\|_{\infty}+d_{1}(f,g),\quad f,\ g\in P,
\end{align}
where $P$ is a certain subspace of the space $C_{b}({\mathbb R}^{n} : [0,\infty))$ and $d_{1}(\cdot;\cdot)$ is a pseudometric on $P.$ In the one-dimensional setting, this approach has been used in many research articles of S. Sto\' inski and his followers (see, e.g., \cite{stoinski2}-\cite{fasc}). 
Concerning this problematic, we will first present here an example of a $p$-semi-anti-periodic function in variation ($1\leq p<+\infty$) based on our analysis from \cite[Example 4.2.9]{nova-selected} and the investigations carried out in \cite{stoja} and \cite{fasc}.
We already know that the function
$$
f(x):=\sum_{m=1}^{\infty}\frac{e^{ix/(2m+1)}}{m^{2}},\quad x\in {\mathbb R}
$$
is not periodic and $f(\cdot)$ is semi-anti-periodic ($n=1,$ $c_{1}=-1$) because it is a uniform limit of $[\pi \cdot (2N+1)!!]$-anti-periodic functions
$$
f_{N}(x):=\sum_{m=1}^{N}\frac{e^{ix/(2m+1)}}{m^{2}},\quad x\in {\mathbb R} \ \ (N\in {\mathbb N});
$$
cf. also \cite[Example 2.11]{brno} for the multi-dimensional analogue of this example.
Now we will prove that $f(\cdot)$ is $p$-semi-anti-periodic function in variation ($1\leq p<+\infty$), i.e., semi-$(x,-{\rm I},1,{\mathcal P})$-periodic with $P$ being the subspace $BV_{p}({\mathbb R} : [0,\infty))$ of $C_{b}({\mathbb R} : [0,\infty))$ consisting of those functions $h(\cdot)$ for which the $p$-variation of $h(\cdot)$ on the interval $[t-1,t+1],$ defined by\index{function!$p$-semi-anti-periodic function in variation}
$$
V_{p}(h;t):=\sup_{\Phi}\Biggl( \sum_{i=0}^{s-1}\bigl| h(u_{i+1})-h(u_{i})  \bigr|^{p} \Biggr)^{1/p},
$$
is finite for any $t\in {\mathbb R}$ (the supremum is taken over all finite partitions $\Phi=\{u_{0},...,u_{s}\},$ $t-1=u_{0}<u_{1}<...<u_{s}=t+1,$ of the interval $[t-1,t+1]$)
and $\sup_{t\in {\mathbb R}}V_{p}(f;t)<+\infty.$ We equip the space $BV_{p}({\mathbb R} : [0,\infty))$ with the metric
\begin{align}\label{12589}
d(f,g):=\sup_{t\in {\mathbb R}}\Bigl(|f(t)-g(t)|+V_{p}(f-g;t)\Bigr),\quad f,\ g\in BV_{p}({\mathbb R} : [0,\infty));
\end{align}
cf. also \eqref{1258}.
Using the partial sums $f_{N}(\cdot),$ it suffices to show that
\begin{align}\label{varir}
\lim_{N\rightarrow +\infty}\sup_{t\in {\mathbb R}}V_{p}\Biggl( \Biggl| \sum_{m=N+1}^{\infty}\frac{e^{i\cdot/(2m+1)}}{m^{2}} \Biggr| ;t\Biggr)=0.
\end{align}
Let $t\in {\mathbb R}$ be fixed, and let $\Phi=\{u_{0},...,u_{s}\},$ $t-1=u_{0}<u_{1}<...<u_{s}=t+1,$ be any finite partition of the interval $[t-1,t+1].$ Then we have:
\begin{align*}
\sup_{\Phi}&\Biggl( \sum_{i=0}^{s-1}\Biggl| \Biggl| \sum_{m=N+1}^{\infty}\frac{e^{iu_{i+1}/(2m+1)}}{m^{2}} \Biggr| -  \Biggl| \sum_{m=N+1}^{\infty}\frac{e^{iu_{i}/(2m+1)}}{m^{2}} \Biggr|\Biggr|^{p} \Biggr)^{1/p}
\\& \leq \sup_{\Phi}\Biggl( \sum_{i=0}^{s-1}\Biggl|  \sum_{m=N+1}^{\infty}\frac{e^{iu_{i+1}/(2m+1)}}{m^{2}} - \sum_{m=N+1}^{\infty}\frac{e^{iu_{i}/(2m+1)}}{m^{2}} \Biggr|^{p} \Biggr)^{1/p}
\\& \leq \sup_{\Phi}\Biggl( \sum_{i=0}^{s-1}\Biggl|  \sum_{m=N+1}^{\infty}\frac{\bigl|e^{iu_{i+1}/(2m+1)}-e^{iu_{i}/(2m+1)}\bigr|}{m^{2}} \Biggr|^{p} \Biggr)^{1/p}
\\& \leq \sup_{\Phi}\Biggl( \sum_{i=0}^{s-1}\Biggl|  \sum_{m=N+1}^{\infty}\frac{\bigl|u_{i+1}-u_{i}\bigr|}{m^{2}(2m+1)} \Biggr|^{p} \Biggr)^{1/p}
\\& \leq \sup_{\Phi}\Biggl( \sum_{i=0}^{s-1}\bigl| u_{i+1}-u_{i}  \bigr|^{p} \Biggr)^{1/p}\sum_{m=N+1}^{\infty}\frac{1}{m^{2}(2m+1)}
\\& \leq \sup_{\Phi} \sum_{i=0}^{s-1}\bigl| u_{i+1}-u_{i}  \bigr|\sum_{m=N+1}^{\infty}\frac{1}{m^{2}(2m+1)}
\\&=2\sum_{m=N+1}^{\infty}\frac{1}{m^{2}(2m+1)}
\rightarrow 0,\quad N\rightarrow +\infty.
\end{align*}
\item[(ii)]
If we remove the term $|f(t)-g(t)|$ in \eqref{12589}, then we obtain the concept of slow $p$-semi-periodicity in variation ($1\leq p<+\infty$); see also \cite[pp. 563-564, Theorem 6]{stoj-nov}. For the sequel, let us recall that A. Haraux and P. Souplet have proved, in \cite[Theorem 1.1]{haraux}, that the function
$$
f(t):=\sum_{m=1}^{\infty}\frac{1}{m}\sin^{2}\Bigl( \frac{t}{2^{m}} \Bigr),\quad t\in {\mathbb R},
$$\index{function!slowly $p$-semi-periodic in variation}
is not Besicovitch-$p$-almost periodic for any finite exponent $p\geq 1$, as well as that $f(\cdot)$ is uniformly recurrent and uniformly continuous;  see \cite{nova-mono} for the notion and more details.
Now we will prove that the function $f(\cdot)$ is slowly $p$-semi-periodic in variation ($1\leq p<+\infty$). In our approach,
we consider the pseudometric space ${\mathcal P}:=(P,d_{1}),$ where $P=BV_{p}({\mathbb R} : [0,\infty))$ 
and $d_{1}(f,g):=\sup_{t\in {\mathbb R}}V_{p}(f-g;t),$ $f,\ g\in P;$ then we actually want to show that the function  
$f(\cdot)$ is $(x,1,{\mathcal P})$-semi-periodic; but, this simply follows from the equality\index{space!$BV_{p}({\mathbb R} : [0,\infty))$}
\begin{align*}
\lim_{N\rightarrow +\infty}\sup_{t\in {\mathbb R}}V_{p}\Biggl( \Biggl| \sum_{m=N+1}^{\infty}\frac{\sin^{2}(\cdot/2^{m})}{m} \Biggr| ;t\Biggr)=0,
\end{align*}
which can be proved as in part (i), for the equality \eqref{varir}.
\item[(iii)] The spaces of Bohr almost periodic functions  (semi-$(c_{j})_{j\in {\mathbb N}_{n}}$-periodic functions) $F: {\mathbb R}^{n} \rightarrow Y$ can be extended if we use the pseudometric spaces ${\mathcal P}$ such that the space $C_{b}({\mathbb R}^{n} : Y)$ is continuously embedded into ${\mathcal P}.$ The obvious choice is the space $C_{b,\nu}({\mathbb R}^{n} : Y)$, where the function $1/\nu(\cdot)$ is locally bounded
and
$\nu(x)\leq c,$ $x\in {\mathbb R}^{n}$ for some finite real number $c>0.$ But, here we can also use the metric space $P$ consisting of all continuous functions from ${\mathbb R}^{n}$ into $[0,\infty),$ equipped with the distance
$$
d(f,g):=\sup_{x\in {\mathbb R}^{n}}\bigl| \arctan (f(x))-\arctan(g(x)) \bigr|,\quad f,\ g\in P.
$$
Concerning the use of this metric space, we would like to note first that ${\mathcal P}$ is not complete since the sequence 
$$
\Bigl(f_{n}(\cdot):=\tan \bigl((\pi/2)-[\cos^{2}\cdot]-(1/n)\bigr)\Bigr)_{n\in {\mathbb N}}
$$ 
of periodic functions is a Cauchy sequence in ${\mathcal P}$ but not convergent.  
Denote by $\overline{{\mathcal P}}=(\overline{P},\overline{d})$
the completion of ${\mathcal P}=(P,d).$ Then the limit function $f(\cdot)$ of sequence $(f_{n}(\cdot))$ has the form $f(x)=\tan((\pi/2)-[\cos^{2}x])=\cot (\cos^{2}x),$ $x\notin (\pi/2)+{\mathbb Z}\pi$ and $f(x)=+\infty,$ if $x\in (\pi/2)+{\mathbb Z}\pi$. Therefore, the function $f(\cdot)$ is not locally integrable since
$$
f(x)\sim \frac{1}{\cos^{2}x} \sim \frac{1}{((\pi/2)+k\pi-x)^{2}},\quad x\rightarrow (\pi/2)+k\pi \ \ (k\in {\mathbb Z}),
$$
which implies that we cannot expect that the function $f(\cdot)$ is Besicovitch (Weyl, Stepanov) almost periodic in the usual sense (\cite{nova-selected}). On the other hand, it is clear that we can extend the notion introduced in Definition \ref{strong-app}
by replacing the limit equality \eqref{gnarbi} by 
\begin{align}\label{pehar}
\lim_{k\rightarrow +\infty}\sup_{x\in B} \Bigl\|{\mathbb F}(\cdot) \phi\Bigl(\bigl\|P_{k}^{B}(\cdot;x)-F(\cdot;x)\bigr\|_{Y}\Bigr)\Bigr\|_{\overline{P}}=0.
\end{align}
Then the function $f(\cdot)$ considered above will be $(x,1,\overline{{\mathcal P}})$-semi-periodic, with the meaning clear. We will further analyze the  approximation equality \eqref{pehar} in our forthcoming research studies; this concept seems to be very important because it allows one to consider, maybe for the first time in existing literature, the generalized almost periodicity of functions 
 $f(\cdot)$ which are not locally integrable.\index{metric space!completion}
\item[(iv)] The use of a complete metric space $P=C({\mathbb R}^{n} )$ is completely irrelevant in our analysis (the use of function spaces $P=C^{k}({\mathbb R}^{n} )$, where $k\in {\mathbb N},$ $P=C^{\infty}({\mathbb R}^{n} )$ and some ultradistributional analogues of these spaces is much more important but we will not consider here this topic). More precisely, let $P$ be equipped with the metric
$$
d(f,g):=\sum_{k=1}^{\infty}2^{-k}\frac{\sup_{x\in [-k,k]^{n}}| f(x)-g(x)|}{1+\sup_{x\in [-k,k]^{n}}| f(x)-g(x)|},\quad f,\ g\in P,
$$
and let $\phi(x)\equiv x,$ ${\mathbb F}\equiv 1.$ Then a continuous function $F : {\mathbb R}^{n} \rightarrow Y$ is $(x,1,{\mathcal P})$-almost periodic if and only if there exists a sequence $(P_{k})$ of trigonometric polynomials which converges uniformly to $F(\cdot)$ on any compact subset of ${\mathbb R}^{n}.$ 
Using the vector-valued version of the Weierstrass approximation theorem, we can simply prove that any continuous function $F : {\mathbb R}^{n} \rightarrow Y$ is $(x,1,{\mathcal P})$-almost periodic; in actual fact, for every integer $k\in {\mathbb N},$ we can find a trigonometric polynomial $P_{k} : {\mathbb R}^{n} \rightarrow Y$ such that $\sup_{|{\bf t}|\leq k}\| P_{k}({\bf t})-F({\bf t})\|_{Y} \leq 1/k.$ Then $(P_{k})$ converges uniformly to $F(\cdot)$ on any compact subset of ${\mathbb R}^{n}.$ 
\end{itemize}
\end{example}

\subsection{Metrical normality and metrical Bohr type definitions}\label{zeljo}

We refer the reader to the survey article \cite{deda} by J. Andres, A. M. Bersani, R. F. Grande and the monograph \cite{nova-selected} for more details about various classes of normal type functions in the theory of almost periodic functions.
We start this subsection by introducing the following notion, which generalizes the usual notion of Bohr normality (see, e.g., \cite[Definition 2.6]{deda} for the one-dimensional setting):

\begin{defn}\label{dfggg-met}
Suppose that ${\mathrm R}$ is any collection of sequences in $\Lambda'',$ $F: \Lambda \times X \rightarrow Y,$ $\phi : [0,\infty) \rightarrow [0,\infty)$ and ${\mathbb F} : \Lambda \rightarrow (0,\infty).$ Then we say that the function $F(\cdot;\cdot)$ is
$(\phi,{\mathrm R}, {\mathcal B},{\mathbb F},{\mathcal P})$-normal
if and only if for every set $B\in {\mathcal B}$ and for every sequence $({\bf b}_{k})_{k\in {\mathbb N}}$ in ${\mathrm R}$ there exists a subsequence $({\bf b}_{k_{m}})_{m\in {\mathbb N}}$ of $({\bf b}_{k})_{k\in {\mathbb N}}$ such that, for every $\epsilon>0,$ there exists an integer $m_{0}\in {\mathbb N}$ such that, for every integers $m,\ m'\geq m_{0},$ we have 
\begin{align}\label{nd}
\sup_{x\in B}\Biggl\|{\mathbb F}(\cdot)\phi\Bigl( \bigl\| F(\cdot+{\bf b}_{k_{m}};x)-F(\cdot+{\bf b}_{k_{m'}};x)\bigr\|_{Y} \Bigr)\Biggr\|_{P} <\epsilon.
\end{align}
\end{defn}\index{function!$(\phi,{\mathrm R}, {\mathcal B},{\mathbb F},{\mathcal P})$-normal}

The usual notion of $({\mathrm R}, {\mathcal B},{\mathcal P})$-normality is obtained by plugging ${\mathbb F}(\cdot)\equiv 1,$ $\phi(x)\equiv x,$ 
${\mathrm R}$ being a collection of sequences in $\Lambda'',$ and $P=L^{\infty}(\Lambda);$ see also \cite[Definition 2.1]{metrical}, where we have assumed that \eqref{nd} holds for every set $B'$ of a collection $L(B; {\bf b})$
of certain subsets of $B.$ The notion introduced in \cite[Definition 2.1, Definition 2.2]{metrical} can be further strengthened by using the general functions ${\mathbb F}(\cdot)$ and $\phi(x)$ therein. 

The uniformly convergent sequences of almost periodic type functions and the behaviour of its limit function have been examined in many structural results established so far (see, e.g., \cite[Proposition 2.7]{metrical} for such a result regarding metrically almost periodic type functions). In this paper, we will clarify only one result concerning this issue:

\begin{prop}\label{asad}
Suppose that ${\mathrm R}$ is any collection of sequences in $\Lambda'',$ $F_{j}: \Lambda \times X \rightarrow Y,$ $\phi : [0,\infty) \rightarrow [0,\infty),$ ${\mathbb F} : \Lambda \rightarrow (0,\infty),$ and the function $F_{j}(\cdot;\cdot)$ is
$(\phi,{\mathrm R}, {\mathcal B},{\mathbb F},{\mathcal P})$-normal for all $j\in {\mathbb N}.$ If $F: \Lambda \times X \rightarrow Y$ and, for every set $B\in {\mathcal B}$ and for every sequence  $({\bf b}_{k})_{k\in {\mathbb N}}$ in ${\mathrm R},$ we have
\begin{align}\label{zrenjanin}
\lim_{(j,k)\rightarrow +\infty}\sup_{x\in B}\Biggl\|{\mathbb F}(\cdot)\phi\Bigl(\bigl\| F_{j}(\cdot +{\bf b}_{j};x)-F(\cdot +{\bf b}_{k};x)\bigr\|_{Y}\Bigr) \Biggr\|_{P}=0,
\end{align}
then the function $F(\cdot;\cdot)$ is likewise $(\phi,{\mathrm R}, {\mathcal B},{\mathbb F},{\mathcal P})$-normal, provided that:
\begin{itemize}
\item[(i)] The function $\phi(\cdot)$ is monotonically increasing and there exists a finite real constant $c>0$ such that $\phi(x+y)\leq c[\phi(x)+\phi(y)]$ for all $x,\ y\geq 0.$
\item[(ii)] Condition \emph{(C3)} holds, where:
\begin{itemize}
\item[(C3)]
There exists a finite real constant $f>0$ such that the assumptions $f,\ g,\ h\in P$ and $0\leq w\leq d'[f+g+h]$ for some finite real constant $d'>0$ imply $w\in P$ and $\| w\|_{P}\leq f(1+d')[\|f\|_{P}+\|g\|_{P}+\|h\|_{P}].$
\end{itemize}
\end{itemize}
\end{prop}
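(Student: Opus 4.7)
The plan is to run a diagonal extraction on the subsequences supplied by the normality of each $F_{j}$, and then to close a three-term triangle estimate in the pseudometric space ${\mathcal P}$, using hypotheses (i) and (C3) to pass from pointwise control on $Y$ to control on the $P$-norm.

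First I would fix $B\in{\mathcal B}$ and a sequence $({\bf b}_{k})_{k\in{\mathbb N}}$ in ${\mathrm R}$. By $(\phi,{\mathrm R},{\mathcal B},{\mathbb F},{\mathcal P})$-normality of $F_{1}$, there is a subsequence $({\bf b}^{(1)}_{k})$ of $({\bf b}_{k})$ witnessing \eqref{nd} for $F_{1}$; extracting successively from $F_{2}, F_{3}, \dots$ produces nested subsequences $({\bf b}^{(j)}_{k})$, and the diagonal ${\bf b}_{k_{m}}:={\bf b}^{(m)}_{m}$ is, from the index $m\geq j$ on, a subsequence of $({\bf b}^{(j)}_{k})$ and therefore inherits the Cauchy-type condition \eqref{nd} for every $F_{j}$ simultaneously.

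The core estimate comes from the pointwise bound
\[
\bigl\|F(\cdot+{\bf b}_{k_{m}};x)-F(\cdot+{\bf b}_{k_{m'}};x)\bigr\|_{Y} \leq U_{m,j}(\cdot,x)+V_{m,m',j}(\cdot,x)+U_{m',j}(\cdot,x),
\]
where $U_{m,j}(\cdot,x):=\|F(\cdot+{\bf b}_{k_{m}};x)-F_{j}(\cdot+{\bf b}_{k_{m}};x)\|_{Y}$ and $V_{m,m',j}(\cdot,x):=\|F_{j}(\cdot+{\bf b}_{k_{m}};x)-F_{j}(\cdot+{\bf b}_{k_{m'}};x)\|_{Y}$. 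The monotonicity and $c$-subadditivity of $\phi$ in (i), applied twice, yield a constant $c'>0$ depending only on $c$ with $\phi(U_{m,j}+V_{m,m',j}+U_{m',j}) \leq c'\bigl[\phi(U_{m,j})+\phi(V_{m,m',j})+\phi(U_{m',j})\bigr]$. Multiplying by ${\mathbb F}(\cdot)$ and invoking (C3) then upgrades this to a $P$-norm inequality whose right-hand side is a fixed multiple of $\|{\mathbb F}(\cdot)\phi(U_{m,j})\|_{P}+\|{\mathbb F}(\cdot)\phi(V_{m,m',j})\|_{P}+\|{\mathbb F}(\cdot)\phi(U_{m',j})\|_{P}$, with a constant independent of $j, m, m'$ and $x\in B$.

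Given $\epsilon>0$, I would use \eqref{zrenjanin}, applied with the diagonal sequence $({\bf b}_{k_{m}})$ playing the role of the test sequence in ${\mathrm R}$, to fix $j$ large and an integer $m_{1}$ so that the two $U$-terms are each below $\epsilon$ uniformly in $x\in B$ for $m, m'\geq m_{1}$. For this fixed $j$, the diagonal construction yields $m_{2}\geq j$ such that the $V$-term is below $\epsilon$ for $m, m'\geq m_{2}$. Setting $m_{0}:=\max(m_{1},m_{2})$ then delivers \eqref{nd} for $F$ up to a controllable multiple of $\epsilon$, which is all that is needed. The main delicacy is the asymmetric appearance of $j$ and $k$ in \eqref{zrenjanin} (one attached to $F_{j}$, the other to the translate ${\bf b}_{k}$): reading it as a genuine joint limit in $(j,k)\to\infty$ is exactly what licenses the specialization to the diagonal sequence, and the remaining bookkeeping of the constants $c$, $c'$, and $f$ is routine.
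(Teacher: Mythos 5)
Your core estimate is exactly the paper's: a three-term triangle decomposition of $\|F(\cdot+{\bf b}_{k_m};x)-F(\cdot+{\bf b}_{k_{m'}};x)\|_Y$ through translates of an approximant $F_j$, upgraded to a $P$-norm inequality via the monotonicity and $c$-subadditivity of $\phi$ together with (C3), with the two outer terms controlled by \eqref{zrenjanin} and the middle term by normality. Where you diverge is the extraction step, and that is where the gap sits. Your nested/diagonal extraction requires, at stage $j+1$, applying the $(\phi,{\mathrm R},{\mathcal B},{\mathbb F},{\mathcal P})$-normality of $F_{j+1}$ to the sequence $({\bf b}^{(j)}_k)$ produced at stage $j$. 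But Definition \ref{dfggg-met} only quantifies over sequences belonging to ${\mathrm R}$, and ${\mathrm R}$ is an arbitrary collection of sequences in $\Lambda''$, not assumed closed under passing to subsequences; nothing licenses this second extraction. The same objection applies, more mildly, to invoking \eqref{zrenjanin} ``with the diagonal sequence playing the role of the test sequence in ${\mathrm R}$'': the diagonal sequence need not lie in ${\mathrm R}$. That particular step is repairable (apply \eqref{zrenjanin} to the original $({\bf b}_k)\in{\mathrm R}$ and then restrict to the indices $k_m\geq N$, since the bound holds for all $\min(j,k)\geq N$), but the nested extraction is not.

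The paper avoids diagonalization entirely: it fixes $\epsilon$, uses \eqref{zrenjanin} to choose a single index $N$, and extracts one subsequence from the normality of the single function $F_N$ applied to the original sequence $({\bf b}_k)\in{\mathrm R}$. To your credit, your diagonalization targets a genuine subtlety that the paper glosses over, namely that Definition \ref{dfggg-met} asks for one subsequence that works for every $\epsilon$, whereas the paper's subsequence depends on $\epsilon$ through $N=N(\epsilon)$; but as executed your fix is not justified by the stated hypotheses (it would go through if ${\mathrm R}$ were additionally assumed closed under subsequences). One further caution: your $U$-terms carry the same translate ${\bf b}_{k_m}$ on both $F$ and $F_j$, whereas \eqref{zrenjanin} as printed pairs $F_j$ with ${\bf b}_j$ and $F$ with ${\bf b}_k$; the paper's own decomposition uses the same-translate reading, so you are in good company, but under the literal reading your reduction of the $U$-terms to \eqref{zrenjanin} does not match the indices either, and this point deserves an explicit sentence rather than the remark that the joint limit ``licenses'' the specialization.
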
\index{condition!(C3)}

\begin{proof}
Without loss of generality, we may assume that $c=f=1.$
Let a real number $\epsilon>0$, a set $B\in {\mathcal B}$ and a sequence $({\bf b}_{k})_{k\in {\mathbb N}}$ in ${\mathrm R}$ be given. 
Then there exists a natural number $N\in {\mathbb N}$
such that the assumption $\min(j,k)\geq N$ implies 
$$
\sup_{x\in B}\Biggl\|{\mathbb F}(\cdot)\phi\Bigl(\bigl\| F_{j}(\cdot +{\bf b}_{j};x)-F(\cdot +{\bf b}_{k};x)\bigr\|_{Y}\Bigr) \Biggr\|_{P}<\epsilon/3.
$$
After that, we find a subsequence $({\bf b}_{k_{m}})_{m\in {\mathbb N}}$ of $({\bf b}_{k})_{k\in {\mathbb N}}$ and a natural number $m_{0}\in {\mathbb N}$ such that, for every integers $m,\ m'\geq m_{0},$ we have:
\begin{align*}
\sup_{x\in B}\Biggl\|{\mathbb F}(\cdot)\phi\Bigl( \bigl\| F_{N}(\cdot+{\bf b}_{k_{m}};x)-F_{N}(\cdot+{\bf b}_{k_{m'}};x)\bigr\|_{Y} \Bigr)\Biggr\|_{P} <\epsilon/3.
\end{align*}
Then the final conclusion simply follows from the last two estimates, conditions (i)-(ii) and the next decomposition ($m,\ m'\geq m_{0}$):
\begin{align*}
\sup_{x\in B}&\Biggl\|{\mathbb F}(\cdot)\phi\Bigl(\bigl\| F(\cdot +{\bf b}_{k_{m}};x)-F(\cdot +{\bf b}_{k_{m'}};x)\bigr\|_{Y}\Bigr) \Biggr\|_{P}
\\& \leq \sup_{x\in B}\Biggl\|{\mathbb F}(\cdot)\phi\Bigl(\bigl\| F(\cdot +{\bf b}_{k_{m}};x)-F_{N}(\cdot +{\bf b}_{k_{m}};x)\bigr\|_{Y}\Bigr) \Biggr\|_{P}
\\& +\sup_{x\in B}\Biggl\|{\mathbb F}(\cdot)\phi\Bigl(\bigl\| F_{N}(\cdot +{\bf b}_{k_{m}};x)-F_{N}(\cdot +{\bf b}_{k_{m'}};x)\bigr\|_{Y}\Bigr) \Biggr\|_{P}
\\& +\sup_{x\in B}\Biggl\|{\mathbb F}(\cdot)\phi\Bigl(\bigl\| F_{N}(\cdot +{\bf b}_{k_{m'}};x)-F(\cdot +{\bf b}_{k_{m'}};x)\bigr\|_{Y}\Bigr) \Biggr\|_{P}.
\end{align*}
\end{proof}

\begin{rem}\label{smjeh}
It is clear that condition (ii) from the formulations of Proposition \ref{kimih} and Proposition \ref{asad} is a little bit inappropriate because it cannot detect a continuity or a measurability of function $w(\cdot).$ Despite of this, the proofs of  Proposition \ref{kimih}, Proposition \ref{asad} and many other statements clarified below still work in concrete situations, so that our results are applicable if $P$ is a solid Banach space, for example.\index{solid Banach space}
\end{rem}

We need the following extension of \cite[Definition 3.1]{metrical}, where we have ${\mathbb F}(\cdot)\equiv 1$ and $\phi(x)\equiv x:$

\begin{defn}\label{nafaks-met}
Suppose that $\emptyset  \neq \Lambda' \subseteq {\mathbb R}^{n},$ $\emptyset  \neq \Lambda \subseteq {\mathbb R}^{n},$ $F : \Lambda \times X \rightarrow Y$ is a given function, $\rho$ is a binary relation on $Y,$ ${\mathbb F} : \Lambda \rightarrow (0,\infty)$ and $\Lambda' \subseteq \Lambda''.$ Then we say that:
\begin{itemize}
\item[(i)]\index{function!Bohr $(\phi,{\mathbb F},{\mathcal B},\Lambda',\rho,{\mathcal P})$-almost periodic}
$F(\cdot;\cdot)$ is Bohr $(\phi,{\mathbb F},{\mathcal B},\Lambda',\rho,{\mathcal P})$-almost periodic if and only if for every $B\in {\mathcal B}$ and $\epsilon>0$
there exists $l>0$ such that for each ${\bf t}_{0} \in \Lambda'$ there exists ${\bf \tau} \in B({\bf t}_{0},l) \cap \Lambda'$ such that, for every ${\bf t}\in \Lambda$ and $x\in B,$ there exists an element $y_{{\bf t};x}\in \rho (F({\bf t};x))$ such that
\begin{align*}
\sup_{x\in B} \Biggl\| {\mathbb F}(\cdot) \phi\Bigl(\bigl\| F(\cdot+{\bf \tau};x)-y_{\cdot;x}\bigr\|_{Y}\Bigr)\Biggr\|_{P} \leq \epsilon .
\end{align*}
\item[(ii)] \index{function!$(\phi,{\mathbb F},{\mathcal B},\Lambda',\rho,{\mathcal P})$-uniformly recurrent}
$F(\cdot;\cdot)$ is $(\phi,{\mathbb F},{\mathcal B},\Lambda',\rho,{\mathcal P})$-uniformly recurrent if and only if for every $B\in {\mathcal B}$ 
there exists a sequence $({\bf \tau}_{k})$ in $\Lambda'$ such that $\lim_{k\rightarrow +\infty} |{\bf \tau}_{k}|=+\infty$ and that, for every ${\bf t}\in \Lambda$ and $x\in B,$ there exists an element $y_{{\bf t};x}\in \rho (F({\bf t};x))$ such that 
\begin{align*}
\lim_{k\rightarrow +\infty}\sup_{x\in B} \Biggl\| {\mathbb F}(\cdot)\phi\Bigl(\bigl\|F(\cdot+{\bf \tau}_{k};x)-y_{\cdot;x}\bigr\|_{Y}\Bigr)\Biggr\|_{P}=0.
\end{align*}
\end{itemize}
\end{defn}

For our purposes, the situation in which $\rho={\mathrm I}$ will be the most important (the consideration of notion of strong $(\phi,\rho,{\mathbb F},{\mathcal B},{\mathcal P})$-almost periodicity with a general binary relation $\rho$ on $Y$ is a bit misleading; cf. Definition \ref{strong-app}). 

We continue by clarifying the following result:

\begin{prop}\label{rep}
\begin{itemize}
\item[(i)] Suppose that ${\mathrm R}$ is any collection of sequences in $\Lambda'',$ $F: \Lambda \times X \rightarrow Y,$ $\phi : [0,\infty) \rightarrow [0,\infty)$ and ${\mathbb F} : \Lambda \rightarrow (0,\infty).$ Let the following conditions hold:
\begin{itemize}
\item[(a)] The function $\phi(\cdot)$ is monotonically increasing, continuous at the point zero, and there exists a finite real constant $c>0$ such that $\phi(x+y)\leq c[\phi(x)+\phi(y)]$ for all $x,\ y\geq 0.$
\item[(b)] Condition \emph{(C3)} holds.
\item[(c)] ${\mathbb F}(\cdot)\phi( \| P(\cdot;x) \|_{Y})\in P$ for any trigonometric polynomial (periodic function) $P(\cdot;\cdot)$ and $x\in X.$
\item[(d)]  There exists a finite real constant $g>0$ such that 
$$
\Bigl\|{\mathbb F}(\cdot)\phi \bigl( \| P(\cdot;x) \|_{Y}\bigr)\Bigr\|_{P} \leq g \Bigl\| \phi \bigl( \| P(\cdot;x) \|_{Y}\bigr)\Bigr\|_{\infty},
$$ 
for any any trigonometric polynomial (periodic function) $P(\cdot;\cdot)$ and $x\in X.$
\item[(e)] There exists
a finite real constant $h>0$ such that, for every $x\in X$ and $\tau \in \Lambda'',$ the assumption ${\mathbb F}(\cdot)\phi(\| H(\cdot ; x)\|_{Y}) \in P$ implies
${\mathbb F}(\cdot)\phi(\| H(\cdot+\tau ;x)\|_{Y}) \in P$ and 
$$
\Bigl\| {\mathbb F}(\cdot)\phi \bigl(\| H(\cdot+\tau ;x)\|_{Y}\bigr) \Bigr\|_{P}\leq h\Bigl\|{\mathbb F}(\cdot)\phi\bigl(\| H(\cdot ;x)\|_{Y}\bigr) \Bigr\|_{P}.
$$
\item[(f)] Any set $B$ of collection ${\mathcal B}$ is bounded.
\end{itemize}
If $F\in e-({\mathcal B},\phi,{\mathbb F})-B^{{\mathcal P}_{\cdot}}(\Lambda \times X : Y)$ [$e-({\mathcal B},\phi,{\mathbb F})_{j\in {\mathbb N}_{n}}-B^{{\mathcal P}_{\cdot}}_{{\rm I}}(\Lambda \times X : Y)$], then  the function $F(\cdot;\cdot)$ is
$(\phi,{\mathrm R}, {\mathcal B},\phi,{\mathbb F},{\mathcal P})$-normal.
\item[(ii)] Suppose that $\emptyset  \neq \Lambda' \subseteq {\mathbb R}^{n},$ $\emptyset  \neq \Lambda \subseteq {\mathbb R}^{n},$ $F : \Lambda \times X \rightarrow Y$ is a given function, and $\Lambda' \subseteq \Lambda''.$ If $F\in e-({\mathcal B},\phi,{\mathbb F})-B^{{\mathcal P}_{\cdot}}(\Lambda \times X : Y)$ [$e-({\mathcal B},\phi,{\mathbb F})_{j\in {\mathbb N}_{n}}-B^{{\mathcal P}_{\cdot}}_{{\rm I}}(\Lambda \times X : Y)$] and the assumptions \emph{(a)-(f)} given in the formulation of \emph{(i)} hold, then the function $F(\cdot;\cdot)$ is Bohr $(\phi,{\mathbb F},{\mathcal B},\Lambda',{\rm I},{\mathcal P})$-almost periodic.
\end{itemize}
\end{prop}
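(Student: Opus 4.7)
The natural approach is to lift the classical Bohr normality, and in part (ii) the classical Bohr almost periodicity, of the approximating trigonometric polynomials (respectively, periodic functions in the bracketed case) up to the metrical setting via the three-term decomposition
\begin{align*}
F(\cdot + {\bf b}_{k_{m}}; x) - F(\cdot + {\bf b}_{k_{m'}}; x) &= \bigl[F(\cdot + {\bf b}_{k_{m}}; x) - P_{n}(\cdot + {\bf b}_{k_{m}}; x)\bigr] \\
&\quad + \bigl[P_{n}(\cdot + {\bf b}_{k_{m}}; x) - P_{n}(\cdot + {\bf b}_{k_{m'}}; x)\bigr] \\
&\quad + \bigl[P_{n}(\cdot + {\bf b}_{k_{m'}}; x) - F(\cdot + {\bf b}_{k_{m'}}; x)\bigr],
\end{align*}
combined with the sub-additivity of $\phi$ in condition (a) (applied twice so as to control $\phi$ of a three-term sum by $c^{2}$ times the sum of $\phi$'s) and the three-term $P$-norm inequality furnished by (C3).

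For part (i), I would fix $B \in {\mathcal B}$ and a sequence $({\bf b}_{k}) \subseteq {\mathrm R}$. Since $F$ belongs to the metrical approximation space, pick a sequence $(P_{n})$ of trigonometric polynomials (respectively, periodic functions) adapted to $B$ with $\sup_{x\in B} \|{\mathbb F}(\cdot)\phi(\|P_{n}(\cdot;x)-F(\cdot;x)\|_{Y})\|_{P} \to 0$. Each $P_{n}$ is classically Bohr almost periodic jointly in $({\bf t};x) \in \Lambda \times B$, using (f) (boundedness of $B$) together with continuity of the coefficient maps of $P_{n}$; in the periodic bracketed case this is even more immediate. Hence any sequence of translations admits a subsequence converging uniformly on $\Lambda \times B$ in the sup-norm. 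A standard diagonal extraction over $n\in {\mathbb N}$ then yields a single subsequence $({\bf b}_{k_{m}})$ that is Bochner-extractable for every $P_{n}$. Given $\epsilon>0$, choose $n$ so that the approximation error is suitably small, then $m_{0}$ so that the middle term is uniformly small on $\Lambda \times B$; insert the decomposition into $\|{\mathbb F}(\cdot)\phi(\cdot)\|_{P}$, bound via (C3) by the sum of three $P$-norms, estimate the outer two by $h$ times the approximation error via (e), and bound the middle by $g$ times a sup-norm via (d), the latter tending to $0$ by continuity of $\phi$ at $0$ and the uniform convergence of the translates of $P_{n}$.

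Part (ii) runs along identical lines, except one uses the classical Bohr almost periodicity of $P_{n}$ in place of normality: given the scaled tolerance, choose $l>0$ so that every ball $B({\bf t}_{0},l)$ with ${\bf t}_{0} \in \Lambda'$ meets $\Lambda'$ at a point $\tau$ (well-defined as a shift of $\Lambda$ because $\Lambda' \subseteq \Lambda''$) with $\sup_{x\in B}\|P_{n}(\cdot+\tau;x)-P_{n}(\cdot;x)\|_{\infty,\Lambda}<\epsilon$. Taking $y_{{\bf t};x}:= F({\bf t};x)$, which is admissible since $\rho={\mathrm I}$, and plugging $P_{n}(\cdot+\tau;x)-P_{n}(\cdot;x)$ into the middle slot of the same three-term decomposition yields the Bohr $(\phi,{\mathbb F},{\mathcal B},\Lambda',{\rm I},{\mathcal P})$-almost periodicity after applying (a), (C3), (d) and (e) exactly as above.

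I expect the main obstacle to be the careful bookkeeping of the multiplicative constants $c$ (from (a)), $g$ (from (d)), $h$ (from (e)) and the constant in (C3), together with the two nested applications of (a), so that the final estimate closes as a prescribed $\epsilon$ independently of $n$ and $m$. A secondary, essentially cosmetic, point is the measurability/continuity defect in (C3) flagged in Remark \ref{smjeh}, which becomes harmless once $P$ is a solid Banach space. Finally, in the bracketed (periodic) case one must ensure that a $(\rho_{j})_{j \in {\mathbb N}_{n}}$-periodic function with $\rho_{j}={\mathrm I}$ is genuinely Bochner-extractable along $({\bf b}_{k})$; this follows from reducing $({\bf b}_{k})$ modulo the period lattice and extracting a convergent subsequence, but is worth stating explicitly to close the argument.
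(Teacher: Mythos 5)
Your proposal is correct and follows essentially the same route as the paper's proof: the three-term decomposition around translates of an approximating trigonometric polynomial (or periodic function), the Bochner criterion applied on the bounded set $B$, and conditions (a)--(e) used in exactly the roles you assign them, with (ii) reduced to the same scheme via the classical Bohr property of the approximant and the choice $y_{{\bf t};x}=F({\bf t};x)$. The one point where you are more careful than the paper is the diagonal extraction over the approximating sequence $(P_{n})$, which is genuinely needed because Definition \ref{dfggg-met} requires a single subsequence that works for every $\epsilon>0$, whereas the paper's written proof fixes $\epsilon$ first and extracts a subsequence depending on the polynomial chosen for that $\epsilon$.
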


\begin{proof}
We will prove (i) for the function $F\in e-({\mathcal B},\phi,{\mathbb F})-B^{{\mathcal P}_{\cdot}}(\Lambda \times X : Y)$. Let a real number $\epsilon>0,$ a set $B\in {\mathcal B},$ and a sequence $({\bf b}_{k})_{k\in {\mathbb N}}$ in ${\mathrm R}$ be fixed. Then we can find a trigonometric polynomial $P(\cdot;\cdot)$ such that
$$
\sup_{x\in B}\Bigl\|{\mathbb F}(\cdot) \phi\Bigl(\bigl\|P(\cdot;x)-F(\cdot;x)\bigr\|_{Y}\Bigr)\Bigr\|_{P}<\epsilon/3.
$$
After that we can use condition (e) in order to see that 
\begin{align}\label{crisis}
\sup_{x\in B}\Bigl\|{\mathbb F}(\cdot) \phi\Bigl(\bigl\|P(\cdot+\tau ;x)-F(\cdot+\tau;x)\bigr\|_{Y}\Bigr)\Bigr\|_{P}<h\epsilon/3
\end{align}
for all $\tau \in \Lambda''.$ Since $B$ is bounded, the Bochner criterion for almost periodic functions in ${\mathbb R}^{n}$ ensures that there exist a subsequence $({\bf b}_{k_{m}})_{m\in {\mathbb N}}$ of $({\bf b}_{k})_{k\in {\mathbb N}}$ and a natural number $m_{0}\in {\mathbb N}$ such that, for every positive integers $m',\ m''\geq m_{0},$ we have
$$
\sup_{x\in B}\Bigl\| P\bigl({\bf t}+{\bf b}_{k_{m'}};x\bigr)-P\bigl({\bf t}+{\bf b}_{k_{m''}};x\bigr)\Bigr\|_{Y}<\epsilon/3,\quad {\bf t}\in {\mathbb R}^{n}.
$$
Employing (c)-(d), we get:
$$
\sup_{x\in B}\Biggl\| {\mathbb F}(\cdot)\phi\Bigl( \Bigl\| P({\bf t}+{\bf b}_{k_{m'}};x)-P({\bf t}+{\bf b}_{k_{m''}};x)\Bigr\|_{Y}\Bigr) \Biggr\|_{P}\leq  g \phi(\epsilon/3),\quad m',\ m''\geq m_{0}.
$$
Then the final conclusion simply follows from conditions (a)-(b), the estimate \eqref{crisis} and the next decomposition
\begin{align*}
\Biggl\|{\mathbb F}(\cdot)&\phi\Bigl( \bigl\| F(\cdot+{\bf b}_{k_{m'}};x)-F(\cdot+{\bf b}_{k_{m''}};x)\bigr\|_{Y} \Bigr)\Biggr\|_{P} 
\\& \leq c'\Biggl[\Biggl\|{\mathbb F}(\cdot)\phi\Bigl( \bigl\| F(\cdot+{\bf b}_{k_{m'}};x)-P(\cdot+{\bf b}_{k_{m'}};x)\bigr\|_{Y} \Bigr)\Biggr\|_{P} 
\\&+\Biggl\|{\mathbb F}(\cdot)\phi\Bigl( \bigl\| P(\cdot+{\bf b}_{k_{m'}};x)-P(\cdot+{\bf b}_{k_{m''}};x)\bigr\|_{Y} \Bigr)\Biggr\|_{P} 
\\&+\Biggl\|{\mathbb F}(\cdot)\phi\Bigl( \bigl\| P(\cdot+{\bf b}_{k_{m''}};x)-F(\cdot+{\bf b}_{k_{m''}};x)\bigr\|_{Y} \Bigr)\Biggr\|_{P} \Biggr],
\end{align*}
with a certain positive real constant $c'>0.$
The proof of (ii) can be deduced similarly and therefore omitted.
\end{proof}

The conclusions established in \cite[Example 3.9]{metrical} can be simply formulated for Bohr $(\phi,{\mathbb F},{\mathcal B},\Lambda',\rho,{\mathcal P})$-almost periodic functions and $(\phi,{\mathbb F},{\mathcal B},\Lambda',\rho,{\mathcal P})$-uniformly recurrent functions. Furthermore, the statements of \cite[Proposition 3.7, Corollary 3.8]{metrical} can be extended in the following way:

\begin{prop}\label{nbm}
Suppose that $\emptyset  \neq \Lambda' \subseteq {\mathbb R}^{n},$ $\emptyset  \neq \Lambda \subseteq {\mathbb R}^{n},$  $\Lambda +\Lambda' \subseteq \Lambda$, and the function $F : \Lambda \times X \rightarrow Y$ is Bohr $(\phi,{\mathbb F},{\mathcal B},\Lambda',\rho,{\mathcal P})$-almost periodic ($(\phi,{\mathbb F},{\mathcal B},\Lambda',\rho,{\mathcal P})$-uniformly recurrent), where $\rho$ is a binary relation on $Y$ satisfying $R(F)\subseteq D(\rho)$ and $\rho(y)$ is a singleton for any $y\in R(F).$ Suppose that for each ${\bf \tau}\in \Lambda'$ we have $\tau +\Lambda=\Lambda$, the function $\phi(\cdot)$ is monotonically increasing, there exists a finite real constant $c>0$ such that $\phi(x+y)\leq c[\phi(x)+\phi(y)]$ for all $x,\ y\geq 0,$  
and the following conditions hold:
\begin{itemize}\label{ijk}
\item[(P)] ${\mathcal P}_{1}=(P_{1},d_{1})$ is a pseudometric space, $c'\in (0,\infty)$ and for every $f\in P$ and $\tau \in \Lambda'$ we have $f(\cdot-\tau)\in P_{1}$ and $\| f(\cdot-\tau)\|_{P_{1}}\leq c'\|f\|_{P}.$ 
\item[(i)] We have ${\mathbb G} : \Lambda \rightarrow (0,\infty)$ and ${\mathbb G}(\cdot) \leq \inf_{\tau \in \Lambda'}{\mathbb F}(\cdot-\tau).$
\item[(ii)] Condition \emph{(C2)} holds.
\item[(iii)] Condition \emph{(C0)} holds, where:
\begin{itemize}
\item[(C0)]
There exists a finite real constant $e>0$ such that the assumptions $g\in P_{1}$ and $0\leq f\leq g$ imply $\|f\|_{P_{1}}\leq e\|g\|_{P_{1}}.$
\end{itemize}
\end{itemize}
Then $\Lambda+(\Lambda'-\Lambda')\subseteq \Lambda$ and the function $F(\cdot;\cdot)$ is Bohr $(\phi,{\mathbb F},{\mathcal B},\Lambda'-\Lambda',{\rm I},{\mathcal P})$-almost periodic ($(\phi,{\mathbb F},{\mathcal B},\Lambda'-\Lambda',{\rm I},{\mathcal P})$-uniformly recurrent).
\end{prop}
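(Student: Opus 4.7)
The plan is to combine the strategy of the analogous \cite[Proposition 3.7]{metrical} with the metric-space bookkeeping dictated by conditions (P) and (i)--(iii). The crucial observation is that, since $\rho(F({\bf t};x))$ is a singleton for every ${\bf t}\in\Lambda$ and every $x\in X$, any two Bohr almost periods $\tau_{1},\tau_{2}\in\Lambda'$ in the $\rho$-sense necessarily aim at the same target $y_{{\bf t};x}$, and the triangle inequality in $Y$ converts the two separate $\rho$-type bounds into a genuine identity-type bound for the difference $\tau_{1}-\tau_{2}$.

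First I would verify $\Lambda+(\Lambda'-\Lambda')\subseteq\Lambda$. The hypothesis $\tau+\Lambda=\Lambda$ for $\tau\in\Lambda'$ gives in particular $\Lambda-\tau\subseteq\Lambda$, so for any ${\bf t}\in\Lambda$ and any $\tau_{1},\tau_{2}\in\Lambda'$ one has ${\bf t}-\tau_{2}\in\Lambda$, hence ${\bf t}+(\tau_{1}-\tau_{2})=({\bf t}-\tau_{2})+\tau_{1}\in\Lambda$.

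Next I would produce the Bohr estimate for differences. Fix $B\in{\mathcal B}$ and $\epsilon>0$, and let $l>0$ be furnished by the Bohr $(\phi,{\mathbb F},{\mathcal B},\Lambda',\rho,{\mathcal P})$-almost periodicity. For an arbitrary ${\bf t}_{0}\in\Lambda'-\Lambda'$, write ${\bf t}_{0}=s_{1}-s_{2}$ with $s_{i}\in\Lambda'$ and apply the Bohr property to each $s_{i}$ to obtain $\tau_{i}\in B(s_{i},l)\cap\Lambda'$ that is an $\epsilon$-almost period; then $\tau_{1}-\tau_{2}\in B({\bf t}_{0},2l)\cap(\Lambda'-\Lambda')$. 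Writing $y_{{\bf t};x}$ for the unique element of $\rho(F({\bf t};x))$, each $\tau_{i}$ gives
\begin{align*}
\sup_{x\in B}\Bigl\|{\mathbb F}(\cdot)\phi\Bigl(\bigl\|F(\cdot+\tau_{i};x)-y_{\cdot;x}\bigr\|_{Y}\Bigr)\Bigr\|_{P}\leq\epsilon,\qquad i=1,2.
\end{align*}
The triangle inequality in $Y$, the monotonicity and subadditivity of $\phi$, and condition (C2) applied in $P$ then deliver a constant $K>0$ (depending on $c$ and on the constant from (C2)) such that
\begin{align*}
\sup_{x\in B}\Bigl\|{\mathbb F}(\cdot)\phi\Bigl(\bigl\|F(\cdot+\tau_{1};x)-F(\cdot+\tau_{2};x)\bigr\|_{Y}\Bigr)\Bigr\|_{P}\leq K\epsilon.
\end{align*}

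The final step is the change of variable ${\bf u}={\bf t}+\tau_{2}$, which rewrites the integrand as ${\mathbb F}({\bf u}-\tau_{2})\phi(\|F({\bf u}+(\tau_{1}-\tau_{2});x)-F({\bf u};x)\|_{Y})$; condition (P) transports the resulting function from $P$ into $P_{1}$ with norm inflated by at most $c'$, and the pointwise bound ${\mathbb G}({\bf u})\leq{\mathbb F}({\bf u}-\tau_{2})$ supplied by (i), together with (C0), finally dominates ${\mathbb G}({\bf u})\phi(\|F({\bf u}+(\tau_{1}-\tau_{2});x)-F({\bf u};x)\|_{Y})$ in $P_{1}$-norm by a multiple of $\epsilon$. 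This is precisely the desired Bohr estimate for the almost period $\tau_{1}-\tau_{2}$, which is most naturally phrased with the pair $({\mathbb G},{\mathcal P}_{1})$ in the conclusion. For the uniformly recurrent case I would extract, by a diagonal selection, a subsequence $(\tau_{k_{m}})$ of the given sequence in $\Lambda'$ with $|\tau_{k_{m+1}}-\tau_{k_{m}}|\rightarrow+\infty$ (achievable because $|\tau_{k}|\to+\infty$ allows one to choose $|\tau_{k_{m+1}}|>2|\tau_{k_{m}}|+m$) and with the errors attached to $\tau_{k_{m+1}}$ and $\tau_{k_{m}}$ simultaneously tending to zero, and then feed these into the same chain of estimates.

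The main obstacle I anticipate is keeping the bookkeeping between the two pseudometric spaces ${\mathcal P}$ and ${\mathcal P}_{1}$, and the two weights ${\mathbb F}$ and ${\mathbb G}$, completely straight: the triangle inequality must be executed inside $P$ (where the hypotheses live), while the variable shift only preserves membership in $P_{1}$, so (C0) is indispensable in order to absorb the shifted weight ${\mathbb F}(\cdot-\tau_{2})$ into the prescribed lower-bound weight ${\mathbb G}$ at the very last moment.
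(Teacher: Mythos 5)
Your proof is correct and follows essentially the same route as the paper's: apply the Bohr property to two points of $\Lambda'$, use the singleton hypothesis on $\rho$ to cancel the common target $y_{\cdot;x}$ via the triangle inequality, the monotonicity/subadditivity of $\phi$ and (C2), and then translate by $-\tau_{2}$ using (P), (i) and (C0). Your remark that the final estimate naturally lives in the pair $({\mathbb G},{\mathcal P}_{1})$ rather than $({\mathbb F},{\mathcal P})$ matches what the paper's own proof actually delivers, and your sketch of the uniformly recurrent case supplies a detail the paper omits.
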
\index{condition!(C0)}

\begin{proof}
We will consider only Bohr $(\phi,{\mathbb F},{\mathcal B},\Lambda'-\Lambda',{\rm I},{\mathcal P})$-almost periodic functions. The inclusion $\Lambda+(\Lambda'-\Lambda')\subseteq \Lambda$ can be simply verified.
Let a real number $\epsilon>0$ and a set $B\in {\mathcal B}$ be given. Then there exists $l>0$ such that for each ${\bf t}_{0}^{1},\  {\bf t}_{0}^{2} \in \Lambda'$ there exist two points ${\bf \tau}_{1} \in B({\bf t}_{0}^{1},l) \cap \Lambda'$ and
${\bf \tau}_{2} \in B({\bf t}_{0}^{2},l) \cap \Lambda'$
such that, for every $x\in B,$ we have
\begin{align*}
\Biggl\|{\mathbb F}(\cdot) \phi\Bigl(\bigl\|F\bigl(\cdot+{\bf \tau}_{1};x\bigr)-\rho(F(\cdot;x))\bigr\|_{Y}\Bigr)\Biggr\|_{P} \leq \epsilon/2
\end{align*}
and
\begin{align*}
\Biggl\|{\mathbb F}(\cdot) \phi\Bigl(\bigl\| F\bigl(\cdot+{\bf \tau}_{2};x\bigr)-\rho(F(\cdot;x))\bigr\|_{Y}\Bigr)\Biggr\|_{P} \leq \epsilon/2.
\end{align*}
Our assumptions on the function $\phi(\cdot)$ and condition (ii) simply imply:
\begin{align*}
\Biggl\| {\mathbb F}(\cdot) \phi\Bigl(\bigl\| F\bigl(\cdot+{\bf \tau}_{1};x\bigr)-F\bigl(\cdot+{\bf \tau}_{2};x\bigr)\bigr\|_{Y}\Bigr)\Biggr\|_{P} \leq c'\epsilon,\ x\in B,
\end{align*}
with a certain positive real constant $c'>0.$
Using (P) and translation for the vector $-\tau_{2},$ we get 
\begin{align*}
\Biggl\|{\mathbb F}\bigl(\cdot -\tau_{2}\bigr) \phi\Bigl(\bigl\|F\bigl(\cdot+\bigl[\tau_{2}-{\bf \tau}_{1}\bigr];x\bigr)-F\bigl(\cdot;x\bigr)\bigr\|_{Y}\Bigr)\Biggr\|_{P_{1}} \leq c''\epsilon,\ x\in B,
\end{align*}
with a certain positive real constant $c''>0.$
Clearly, $\tau_{2}-\tau_{1}\in B({\bf t}_{0}^{2}-{\bf t}_{0}^{1},2l) \cap (\Lambda'-\Lambda')$; keeping in mind the assumptions (i) and (iii), the last inclusion simply implies the required.
\end{proof}

\subsection{Metrical semi-$(c_{j},{\mathcal B})_{j\in {\mathbb N}_{n}}$-periodic functions}\label{metr-21}

Let us recall that
the notion of semi-periodicity (sometimes also called limit-periodicity) has been thoroughly analyzed by
J. Andres and D. Pennequin in \cite{andres}. 
The main aim of this subsection is to continue our recent analysis of 
semi-$(c_{j},{\mathcal B})_{j\in {\mathbb N}_{n}}$-periodic functions carried out in \cite[Subsection 2.1]{brno}; for simplicity, we will not consider general binary relations here, and we will assume that $c_{j}\in \{z\in {\mathbb C} : |z|=1\}$ for all $j\in {\mathbb N}_{n}$.

We start by introducing the following metrical analogue of \cite[Definition 2.9]{brno}:

\begin{defn}\label{brno-semi}
Suppose that $\emptyset  \neq \Lambda \subseteq {\mathbb R}^{n}$ and $F :\Lambda  \times X \rightarrow Y.$
Then we say that $F(\cdot;\cdot)$ is semi-$(\phi,c_{j},{\mathbb F},{\mathcal B},{\mathcal P})_{j\in {\mathbb N}_{n}}$-periodic of type $1$ if and only if for each $B\in {\mathcal B}$ and $\epsilon>0$ there exist non-zero real numbers $\omega_{j}$ such that $\omega_{j}e_{j}\in \Lambda''$ for all $j\in {\mathbb N}_{n}$ and
\begin{align}\label{gnarbis}
\sup_{x\in B} \Bigl\|{\mathbb F}(\cdot) \phi\Bigl(\bigl\|F(\cdot +m\omega_{j}e_{j};x)-c_{j}^{m}F(\cdot;x)\bigr\|_{Y}\Bigr)\Bigr\|_{P}<\epsilon,\quad m\in {\mathbb N},\ j\in {\mathbb N}_{n}.
\end{align}
\end{defn}\index{function!semi-$(\phi,c_{j},{\mathbb F},{\mathcal B},{\mathcal P})_{j\in {\mathbb N}_{n}}$-periodic of type $1$}

In the metrical framework, the use of \eqref{gnarbis} with $m\in {\mathbb N}$
is not satisfactorily enough if we use the (pseudo-)metric spaces ${\mathcal P}$ different from $l^{\infty}(\Lambda).$ The following definition suggests the use of \eqref{gnarbis} with $m\in {\mathbb Z}$ and the region $\Lambda={\mathbb R}^{n}:$

\begin{defn}\label{brno-semi2}
Suppose that $F : {\mathbb R}^{n}  \times X \rightarrow Y.$
Then we say that $F(\cdot;\cdot)$ is semi-$(\phi,c_{j},{\mathbb F},{\mathcal B},{\mathcal P})_{j\in {\mathbb N}_{n}}$-periodic of type $2$ if and only if for each $B\in {\mathcal B}$ and $\epsilon>0$ there exist non-zero real numbers $\omega_{j}$ such that \eqref{gnarbis} holds for all $m\in {\mathbb Z}$ and $j\in {\mathbb N}_{n}.$
\end{defn}\index{function!semi-$(\phi,c_{j},{\mathbb F},{\mathcal B},{\mathcal P})_{j\in {\mathbb N}_{n}}$-periodic of type $2$}

In what follows, we investigate the relationship between the notions of semi-$(\phi,c_{j},{\mathbb F},{\mathcal B},{\mathcal P})_{j\in {\mathbb N}_{n}}$-periodicity and the semi-$(\phi,c_{j},{\mathbb F},{\mathcal B},{\mathcal P})_{j\in {\mathbb N}_{n}}$-periodicity of type $1$ ($2$); cf. also \cite[Theorem 2.10]{brno}:

\begin{prop}\label{naiv12}
Suppose that condition \emph{(C3)} holds, $\phi(0)=0$ and there exists a finite real constant $c>0$ such that $\phi(x+y)\leq c[\phi(x)+\phi(y)]$ for all $x,\ y\geq 0.$ If, for every $x\in X$ and $ \tau \in \Lambda'',$ the assumption ${\mathbb F}(\cdot) \phi(\|G(\cdot ;x)\|_{Y})\in P$ implies ${\mathbb F}(\cdot) \phi(\|G(\cdot +\tau;x)\|_{Y})\in P$ and
\begin{align}\label{abso}
\Bigl\|{\mathbb F}(\cdot) \phi \bigl(\|G(\cdot +\tau;x)\|_{Y}\bigr) \Bigr\|_{P} \leq \Bigl\|{\mathbb F}(\cdot) \phi \bigl(\|G(\cdot ;x)\|_{Y}\bigr) \Bigr\|_{P},
\end{align}
and the function $F :\Lambda  \times X \rightarrow Y$
is semi-$(\phi,c_{j},{\mathbb F},{\mathcal B},{\mathcal P})_{j\in {\mathbb N}_{n}}$-periodic ($\Lambda={\mathbb R}^{n}$), then $F(\cdot;\cdot)$ is semi-$(\phi,c_{j},{\mathbb F},{\mathcal B},{\mathcal P})_{j\in {\mathbb N}_{n}}$-periodic of type $1$ (type $2$).
\end{prop}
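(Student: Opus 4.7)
The plan is to import the sequence of $(c_j)_{j\in{\mathbb N}_n}$-periodic approximants furnished by the semi-$(\phi,c_j,{\mathbb F},{\mathcal B},{\mathcal P})_{j\in{\mathbb N}_n}$-periodicity hypothesis (cf. Definition \ref{strong-app}), iterate their defining periodicity relation, and transfer the resulting pointwise triangle estimate through $\phi$, condition (C3) and the shift-invariance \eqref{abso}, so as to put the outcome in the form required by \eqref{gnarbis}.

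Fix $B\in{\mathcal B}$ and $\epsilon>0$. By hypothesis, there exist a sequence $(P_k^B)$ of $(c_j)_{j\in{\mathbb N}_n}$-periodic functions with associated non-zero periods $\omega_{j,k}$ satisfying $\omega_{j,k}e_j\in\Lambda''$, such that
$$\sup_{x\in B}\Bigl\|{\mathbb F}(\cdot)\phi\bigl(\|P_k^B(\cdot;x)-F(\cdot;x)\|_Y\bigr)\Bigr\|_P\longrightarrow 0\quad (k\to\infty).$$
Pick $k$ so large that this quantity is at most $\epsilon/(2f(1+c))$ and set $\omega_j:=\omega_{j,k}$. Iterating the relation $P_k^B({\bf t}+\omega_j e_j;x)=c_j P_k^B({\bf t};x)$ yields $P_k^B({\bf t}+m\omega_j e_j;x)=c_j^m P_k^B({\bf t};x)$ for every $m\in{\mathbb N}$, and, in the case $\Lambda={\mathbb R}^n$, for every $m\in{\mathbb Z}$, since $c_j\neq 0$ allows inversion and $\Lambda''={\mathbb R}^n$ accommodates both signs of $m$.

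Using $|c_j^m|=1$ together with the triangle inequality and the cancellation of the middle term $P_k^B({\bf t}+m\omega_j e_j;x)-c_j^m P_k^B({\bf t};x)=0$, we obtain
$$\|F({\bf t}+m\omega_j e_j;x)-c_j^m F({\bf t};x)\|_Y \leq \|F({\bf t}+m\omega_j e_j;x)-P_k^B({\bf t}+m\omega_j e_j;x)\|_Y+\|P_k^B({\bf t};x)-F({\bf t};x)\|_Y.$$
Applying the (implicitly monotone) $\phi$, then the subadditivity bound $\phi(u+v)\leq c[\phi(u)+\phi(v)]$, multiplying by ${\mathbb F}({\bf t})$, and finally invoking (C3) with the third entry taken to be the zero function of $P$, produces
$$\Bigl\|{\mathbb F}(\cdot)\phi\bigl(\|F(\cdot+m\omega_j e_j;x)-c_j^m F(\cdot;x)\|_Y\bigr)\Bigr\|_P \leq f(1+c)\bigl[A_k(m,x)+B_k(x)\bigr],$$
where $A_k(m,x)$ and $B_k(x)$ denote the $P$-norms of ${\mathbb F}(\cdot)\phi(\cdot)$ applied to the first and second right-hand summands. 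Applying \eqref{abso} with $G(\cdot;x):=F(\cdot;x)-P_k^B(\cdot;x)$ and $\tau=m\omega_j e_j\in\Lambda''$ then bounds $A_k(m,x)\leq B_k(x)$, whence the whole expression is at most $2f(1+c)B_k(x)<\epsilon$, uniformly in $x\in B$ and in $m$. This is exactly \eqref{gnarbis}.

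The only mildly delicate point, and the place where the distinction between type~$1$ and type~$2$ materially enters, is whether $m$ is allowed to range over ${\mathbb Z}$ rather than only ${\mathbb N}$. For negative $m$ one needs both $m\omega_j e_j\in\Lambda''$ (automatic when $\Lambda={\mathbb R}^n$, since then $\Lambda''={\mathbb R}^n$) and the extension of the iterated periodicity identity to negative exponents (automatic from $|c_j|=1\neq 0$). Apart from this, the argument is a routine three-term inequality chase of exactly the same flavour as those used in Proposition \ref{kimih}(ii) and Proposition \ref{asad}.
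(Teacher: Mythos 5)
Your argument is correct and follows essentially the same route as the paper's proof: approximate $F$ by a $(c_j)_{j\in{\mathbb N}_n}$-periodic function, split $F(\cdot+m\omega_je_j;x)-c_j^mF(\cdot;x)$ so that the exact periodicity cancels the middle term and $|c_j|=1$ identifies the last term with the approximation error, then control the shifted term via \eqref{abso} and combine using the subadditivity of $\phi$ and condition (C3). The only (harmless) differences are that you perform the cancellation before applying $\phi$, you explicitly record the tacitly used monotonicity of $\phi$, and you spell out the $m\in{\mathbb Z}$ case for type $2$, which the paper treats only implicitly.
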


\begin{proof}
We will consider the semi-$(\phi,c_{j},{\mathbb F},{\mathcal B},{\mathcal P})_{j\in {\mathbb N}_{n}}$-periodic functions of type $1$, only.
Let $\epsilon>0$ and $B\in {\mathcal B}$ be fixed. Then there exists a $(c_{j})_{j\in {\mathbb N}_{n}}$-periodic function $P(\cdot;\cdot)$ and non-zero real numbers $\omega_{j}$ such that $\omega_{j}e_{j}\in \Lambda''$ for all $j\in {\mathbb N}_{n}$ and
\begin{align}\label{gnarbis1}
\sup_{x\in B} \Bigl\|{\mathbb F}(\cdot) \phi\Bigl(\bigl\|P(\cdot ;x)-F(\cdot;x)\bigr\|_{Y}\Bigr)\Bigr\|_{P}<\epsilon/2.
\end{align}
Using the prescribed assumptions, we have the existence of a finite real constant $c'>0$ such that:
\begin{align*}
\sup_{x\in B}& \Bigl\|{\mathbb F}(\cdot) \phi\Bigl(\bigl\|F(\cdot +m\omega_{j}e_{j};x)-c_{j}^{m}F(\cdot;x)\bigr\|_{Y}\Bigr)\Bigr\|_{P}
\\& \leq c'\Biggl[\sup_{x\in B} \Bigl\|{\mathbb F}(\cdot) \phi\Bigl(\bigl\|F(\cdot +m\omega_{j}e_{j};x)-P(\cdot +m\omega_{j}e_{j};x)\bigr\|_{Y}\Bigr)\Bigr\|_{P}
\\&+\sup_{x\in B} \Bigl\|{\mathbb F}(\cdot) \phi\Bigl(\bigl\|P(\cdot +m\omega_{j}e_{j};x)-c_{j}^{m}P(\cdot;x)\bigr\|_{Y}\Bigr)\Bigr\|_{P}
\\& +\sup_{x\in B} \Bigl\|{\mathbb F}(\cdot) \phi\Bigl(\bigl\|c_{j}^{m}P(\cdot;x)-c_{j}^{m}F(\cdot;x)\bigr\|_{Y}\Bigr)\Bigr\|_{P}
\Biggr]
\\& =c'\Biggl[\sup_{x\in B} \Bigl\|{\mathbb F}(\cdot) \phi\Bigl(\bigl\|F(\cdot +m\omega_{j}e_{j};x)-P(\cdot +m\omega_{j}e_{j};x)\bigr\|_{Y}\Bigr)\Bigr\|_{P}
\\& +\sup_{x\in B} \Bigl\|{\mathbb F}(\cdot) \phi\Bigl(\bigl\|P(\cdot;x)-F(\cdot;x)\bigr\|_{Y}\Bigr)\Bigr\|_{P}
\Biggr]
,\quad m\in {\mathbb N},\ j\in {\mathbb N}_{n}.    
\end{align*}
Then the estimate \eqref{gnarbis} simply follows using the assumption \eqref{abso} and the estimate \eqref{gnarbis1}.
\end{proof}

\begin{rem}\label{raqlaq}
It is worth noting that, in the usually considered situation when $\phi(x)\equiv x$ and ${\mathbb F}(\cdot)\equiv 1,$ the assumptions of Proposition \ref{naiv12} are satisfied for the $p$-semi-$c$-periodic functions
$F : {\mathbb R}\rightarrow Y$ in variation ($1\leq p<+\infty$) since the metric $d(\cdot;\cdot)$ introduced in \eqref{12589} is translation invariant, i.e., for every $\tau \in {\mathbb R}$ we have $d(f,g)=d(f(\cdot +\tau),g(\cdot +\tau)),$ with the meaning clear.
\end{rem}

The converse of Proposition \ref{naiv12} cannot be so simply formulate in the metrical framework because the authors of \cite{andres} has used, in the proof of \cite[Lemma 1]{andres}, the linearizations of periodic functions which approximates $F(\cdot).$ Concerning this issue, we will only formulate one research result without proof, because it is very similar to the proof of \cite[Theorem 1]{andres}:

\begin{prop}\label{naiv122}
Suppose that $P=C_{b,\nu}({\mathbb R}^{n} : [0,\infty))$, the function $F : {\mathbb R}^{n} \rightarrow Y$ is continuous and semi-$(\phi,c_{j},1,{\mathcal P})_{j\in {\mathbb N}_{n}}$-periodic of type $2$, where the function $\nu(\cdot)$ is bounded from above, the function $\phi(\cdot)$ is monotonically increasing, and there exists a finite real constant $c>0$ such that $\phi(x+y)\leq c[\phi(x)+\phi(y)]$ for all $x,\ y\geq 0.$
Then the function $F (\cdot)$ is semi-$(\phi,c_{j},1,{\mathcal P})_{j\in {\mathbb N}_{n}}$-periodic.
\end{prop}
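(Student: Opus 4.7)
The plan is to mirror Andres and Pennequin's argument from \cite[Lemma 1, Theorem 1]{andres}, adapted to the multi-dimensional $c_j$-twisted setting and to the pseudometric space $\mathcal{P}=(C_{b,\nu}(\mathbb{R}^{n}:[0,\infty)),\|\cdot\|_{P})$.

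First, fix $\epsilon>0$ and invoke the type-$2$ hypothesis to obtain non-zero reals $\omega_{1},\ldots,\omega_{n}$ such that
$$
\sup_{t\in\mathbb{R}^{n}}\nu(t)\,\phi\bigl(\|F(t+m\omega_{j}e_{j})-c_{j}^{m}F(t)\|_{Y}\bigr)<\epsilon,\quad m\in\mathbb{Z},\ j\in\mathbb{N}_{n}.
$$
Next I would construct, by induction on the coordinate index, a genuinely $(c_{j},\omega_{j}e_{j})_{j\in\mathbb{N}_{n}}$-periodic continuous function $P:\mathbb{R}^{n}\to Y$ approximating $F$ in $\mathcal{P}$. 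Set $P_{0}:=F$. Given $P_{j-1}$ already $(c_{k},\omega_{k}e_{k})$-periodic in directions $k<j$ and close to $F$, define $P_{j}$ on the slab $\{t:t_{j}\in[0,\omega_{j}]\}$ by the $c_{j}$-twisted affine interpolation
$$
P_{j}(t):=(1-t_{j}/\omega_{j})\,P_{j-1}(t)+(t_{j}/\omega_{j})\,c_{j}P_{j-1}(t-\omega_{j}e_{j}),
$$
so that $P_{j}(t)|_{t_{j}=\omega_{j}}=c_{j}P_{j}(t)|_{t_{j}=0}$, and extend globally by the rule $P_{j}(t+\omega_{j}e_{j})=c_{j}P_{j}(t)$. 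Since the interpolation weights are scalars depending only on $t_{j}$ and $|c_{j}|=1$, the $(c_{k},\omega_{k}e_{k})$-periodicity already achieved for $k<j$ is preserved, and continuity across the slab faces holds by design. Set $P:=P_{n}$.

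For the error estimate, write any $t\in\mathbb{R}^{n}$ as $t=s+\sum_{j}k_{j}\omega_{j}e_{j}$ with $s$ in the fundamental box $\prod_{j}[0,\omega_{j}]$; using the twist rule and $|c_{1}^{k_{1}}\cdots c_{n}^{k_{n}}|=1$, the difference $F(t)-P(t)$ telescopes along the $n$ coordinate directions, each summand having the form $F(\cdot+k_{j}\omega_{j}e_{j})-c_{j}^{k_{j}}F(\cdot)$ controlled by Step 1. On each slab, the interpolation increment $P_{j}-P_{j-1}$ equals $(t_{j}/\omega_{j})[c_{j}P_{j-1}(\cdot-\omega_{j}e_{j})-P_{j-1}(\cdot)]$, which is estimated inductively by the Step-1 bound at the cost of an additional subadditivity constant $c$. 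Applying $\phi$, using the subadditivity $\phi(x+y)\le c[\phi(x)+\phi(y)]$ a bounded number of times, and multiplying by $\nu(t)$ (uniformly bounded from above) yields $\|\mathbb{F}(\cdot)\phi(\|F(\cdot)-P(\cdot)\|_{Y})\|_{P}\le C_{n}\epsilon$ for a constant $C_{n}$ depending only on $n$ and $c$. Since $P$ is $(c_{j},\omega_{j}e_{j})_{j\in\mathbb{N}_{n}}$-periodic, Definition \ref{strong-app} yields the conclusion.

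The main obstacle is the inductive bookkeeping in the construction: one must verify that each interpolation step (i) preserves the already-achieved twisted periodicity in the preceding coordinates, and (ii) does not inflate the approximation error beyond a uniform multiplicative constant. In the one-dimensional pure-periodic case of \cite[Lemma 1]{andres}, this is precisely where the hard work is done; the multi-dimensional $c_{j}$-twisted variant proceeds along the same lines, exploiting at each step the Step-1 bound on $\|c_{j}F(\cdot-\omega_{j}e_{j})-F(\cdot)\|_{Y}$ together with $|c_{j}|=1$ to keep interpolants bounded, and the hypothesis $\sup\nu<\infty$ to translate a uniform estimate into one in $\|\cdot\|_{P}$.
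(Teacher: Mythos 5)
Your overall strategy is the one the paper itself points to (the proof of Proposition \ref{naiv122} is omitted there precisely because it is meant to follow \cite[Lemma 1, Theorem 1]{andres}), and your twisted interpolation $P_{j}(t)=(1-t_{j}/\omega_{j})P_{j-1}(t)+(t_{j}/\omega_{j})c_{j}P_{j-1}(t-\omega_{j}e_{j})$, extended by $P_{j}(t+\omega_{j}e_{j})=c_{j}P_{j}(t)$, is a correct construction: it is continuous, it preserves the twisted periodicity already achieved in the earlier coordinates, and for $n=1$ your error estimate goes through completely. Indeed, writing $t=s+k\omega_{1}e_{1}$ with $s_{1}\in[0,\omega_{1})$ one has $F(t)-P_{1}(t)=(1-\theta)\bigl[F(t)-c_{1}^{k}F(t-k\omega_{1}e_{1})\bigr]+\theta\bigl[F(t)-c_{1}^{k+1}F(t-(k+1)\omega_{1}e_{1})\bigr]$, and both brackets are exactly of the form covered by the type-$2$ hypothesis \emph{with base point $t$} (take $m=-k$ and $m=-(k+1)$ and use $|c_{1}|=1$), so the weight $\nu(t)$ sits at the right place and $\nu(t)\phi(\|F(t)-P_{1}(t)\|_{Y})\leq 2c\epsilon$.

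The gap is in the inductive step for $n\geq 2$. To pass from $P_{1}$ to $P_{2}$ you need the analogue of your Step-1 bound for $P_{1}$ in direction $2$, namely $\sup_{t}\nu(t)\phi(\|P_{1}(t+m\omega_{2}e_{2})-c_{2}^{m}P_{1}(t)\|_{Y})\lesssim\epsilon$ for all $m\in{\mathbb Z}$. Expanding $P_{1}$, this difference is a convex combination of terms $F(u+m\omega_{2}e_{2})-c_{2}^{m}F(u)$ with $u=t-k_{1}\omega_{1}e_{1}$ or $u=t-(k_{1}+1)\omega_{1}e_{1}$; the hypothesis controls these with the weight $\nu(u)$, not $\nu(t)$, and no choice of the sign of $m$ moves the base point back to $t$, because $u$ and $t$ differ in the \emph{first} coordinate while the shift is in the second. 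The assumption that $\nu$ is bounded from above does not repair this: from $\nu(u)\phi(\cdot)<\epsilon$ you only get $\phi(\cdot)<\epsilon/\nu(u)$, which is useless where $\nu$ is small, so you cannot first obtain a uniform estimate and then multiply by the bounded weight, as your closing paragraph suggests. The same obstruction appears if you interpolate in all coordinates simultaneously and telescope $F(t)-\prod_{j}c_{j}^{m_{j}}F(t-\sum_{j}m_{j}\omega_{j}e_{j})$: every intermediate base point except the first differs from $t$. To close the induction you need either a translation-compatibility assumption on the weighted norm (something in the spirit of \eqref{abso} in Proposition \ref{naiv12}, or $\nu(t)\leq C\,\nu(t+\tau)$ for the relevant shifts, or simply $\inf\nu>0$), or a strengthening of the type-$2$ condition to arbitrary lattice shifts $\sum_{j}m_{j}\omega_{j}e_{j}$; as written, the argument is complete only for $n=1$.
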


\section{Metrical approximations: Stepanov, Weyl, Besicovitch and Doss concepts}\label{general-stepa}

In this section, we will consider the generalized Stepanov, Weyl, Besicovitch and Doss metrical approximations by trigonometric polynomials and $\rho$-almost periodic type functions.

\subsection{Stepanov and Weyl metrical approximations}\label{anadol}
In this part, we will assume that condition (S) holds, where:\index{condition!(S)}
\begin{itemize}
\item[(S)] Let $\Omega$ be any compact subset of ${\mathbb R}^{n}$ with positive Lebesgue measure such that $\Lambda +\Omega \subseteq \Lambda.$ 
We assume that
$P_{\Omega} \subseteq [0,\infty)^{\Omega},$  the zero function belongs to $P_{\Omega}$, and 
${\mathcal P}_{\Omega}=(P_{\Omega},d_{\Omega})$ is a pseudometric space. Let $P \subseteq [0,\infty)^{\Lambda}$, let the zero function belong to $P$, and let 
${\mathcal P}=(P,d)$ be a pseudometric space. 
\end{itemize}

By $\cdot$ and $\cdot \cdot$ we denote the arguments from $\Omega$ and $\Lambda,$ respectively. We will use a similar notation in the analysis of Weyl metrical approximations.

We will first 
introduce the following notion:

\begin{defn}\label{stepanov-approximation}
Suppose that (S) holds, $\phi : [0,\infty) \rightarrow [0,\infty)$ and ${\mathbb F}: \Omega \times \Lambda \rightarrow (0,\infty).$ 
Then we say that the function $F(\cdot;\cdot)$ belongs to the class
$e-({\mathbb F},{\mathcal B})-S^{{\mathcal P}_{\Omega}}(\Lambda \times X : Y)$ [$e-({\mathbb F},{\mathcal B})-S^{{\mathcal P}_{\Omega}}_{\rho}(\Lambda \times X : Y);$ $e-({\mathbb F},{\mathcal B})_{j\in {\mathbb N}_{n}}-S^{{\mathcal P}_{\Omega}}_{\rho_{j}}(\Lambda \times X : Y)$] if and only if
for every $B\in {\mathcal B}$ and for every $\epsilon>0$ there exists a trigonometric polynomial [$\rho$-periodic function; $(\rho_{j})_{j\in {\mathbb N}_{n}}$-periodic function] $P(\cdot;\cdot)$ such that
\begin{align*}
\sup_{x\in B}\Biggl\| \Bigl\| {\mathbb F}(\cdot,\cdot \cdot ) \phi\Bigl( \bigl\|P(\cdot \cdot +\cdot ;x)-F(\cdot \cdot+\cdot ;x)\bigr\|_{Y}\Bigr) \Bigr\|_{P_{\Omega}}\Biggr\|_{P}<\epsilon .
\end{align*}
\end{defn}\index{space!$e-({\mathbb F},{\mathcal B})-S^{{\mathcal P}_{\Omega}}(\Lambda \times X : Y)$}\index{space!$e-({\mathbb F},{\mathcal B})-S^{{\mathcal P}_{\Omega}}_{\rho}(\Lambda \times X : Y)$}\index{space!$e-({\mathbb F},{\mathcal B})_{j\in {\mathbb N}_{n}}-S^{{\mathcal P}_{\Omega}}_{\rho_{j}}(\Lambda \times X : Y)$}

The most important subclass of the general class introduced above is obtained by plugging
${\mathbb F}(\cdot,\cdot \cdot ) \equiv 1,$ 
$P=L^{\infty}(\Lambda)$ and $P_{\Omega}=L^{p}(\Omega),$ where $1\leq p<\infty.$

Besides the notion introduced in Definition \ref{stepanov-approximation}, we will consider the following ones:

\begin{defn}\label{stepanov-approximation1}
Suppose that (S) holds, $\phi : [0,\infty) \rightarrow [0,\infty),$ ${\mathbb F}: \Omega \times \Lambda \rightarrow (0,\infty),$ and $F: \Lambda \times X \rightarrow Y.$ 
\begin{itemize}
\item[(i)]\index{function!Stepanov
$(\phi,{\mathrm R}, {\mathcal B},\phi,{\mathbb F},{\mathcal P})$-normal}
Suppose that ${\mathrm R}$ is any collection of sequences in $\Lambda''.$ Then we say that the function $F(\cdot;\cdot)$ is Stepanov
$(\phi,{\mathrm R}, {\mathcal B},\phi,{\mathbb F},{\mathcal P})$-normal
if and only if for every set $B\in {\mathcal B}$ and for every sequence $({\bf b}_{k})_{k\in {\mathbb N}}$ in ${\mathrm R}$ there exists a subsequence $({\bf b}_{k_{m}})_{m\in {\mathbb N}}$ of $({\bf b}_{k})_{k\in {\mathbb N}}$ such that, for every $\epsilon>0,$ there exists an integer $m_{0}\in {\mathbb N}$ such that, for every integers $m,\ m'\geq m_{0},$ we have 
\begin{align*}
\sup_{x\in B}\Biggl\| \Bigl\| {\mathbb F}(\cdot,\cdot \cdot ) \phi\Bigl( \bigl\| F(\cdot+\cdot \cdot+{\bf b}_{k_{m}};x)-F(\cdot+\cdot \cdot+{\bf b}_{k_{m'}};x)\bigr\|_{Y}\Bigr) \Bigr\|_{P_{\Omega}}\Biggr\|_{P}<\epsilon .
\end{align*}\index{space!$S^{(\phi,{\mathbb F},\rho,{\mathcal P}_{\Omega},{\mathcal P})}_{\Omega,\Lambda',{\mathcal B}}(\Lambda\times X :Y)$}
\item[(ii)] By $S^{(\phi,{\mathbb F},\rho,{\mathcal P}_{\Omega},{\mathcal P})}_{\Omega,\Lambda',{\mathcal B}}(\Lambda\times X :Y)$ we denote the set consisting of all functions $F : \Lambda \times X \rightarrow Y$ such that, for every $\epsilon>0$ and $B\in {\mathcal B},$ there exists a finite real number
$L>0$ such that for each ${\bf t}_{0}\in \Lambda'$ there exists $\tau \in B({\bf t}_{0},L)\cap \Lambda'$ such that, for every $x\in B,$ the mapping ${\bf u}\mapsto G_{x}({\bf u})\in \rho( F({\bf u};x)),$ ${\bf u}\in \Omega +\Lambda$ is well defined, and
\begin{align*}
\sup_{x\in B}\Biggl\|{\Bigl\| \mathbb F}(\cdot;\cdot \cdot)\phi\Bigl( \bigl\| F({\bf \tau}+\cdot+\cdot \cdot;x)-G_{x}(\cdot+\cdot \cdot)\bigr\|_{Y}\Bigr) \Bigr\|_{P_{\Omega}} \Biggr\|_{P}<\epsilon.
\end{align*}
\end{itemize}
\end{defn}

The notion introduced in Definition \ref{stepanov-approximation} is stronger than the notion introduced in Definition \ref{stepanov-approximation1};
the interested reader may simply clarify some sufficient conditions under which a function $F\in e-({\mathbb F},{\mathcal B})-S^{{\mathcal P}_{\Omega}}(\Lambda \times X : Y)$ [$F\in e-({\mathbb F},{\mathcal B})_{j\in {\mathbb N}_{n}}-S^{{\mathcal P}_{\Omega}}_{{\rm I}}(\Lambda \times X : Y)$] is 
Stepanov
$(\phi,{\mathrm R}, {\mathcal B},\phi,{\mathbb F},{\mathcal P}_{\Omega},{\mathcal P})$-normal or belongs to the class 
$S^{(\phi,{\mathbb F},\rho,{\mathcal P}_{\Omega},{\mathcal P})}_{\Omega,\Lambda',{\mathcal B}}(\Lambda\times X :Y).$ In the usual context, we have already discussed this question in Proposition \ref{rep} (a uniformly recurrent analogue of the notion introduced in Definition \ref{stepanov-approximation1}(ii) can be also analyzed).

In Definition \ref{stepanov-approximation1}(ii), we do not explicitly use the notion of multi-dimensional Bochner transform (\cite{nova-selected}). In the usual setting, this approach is 
commonly used and it is a very special case of the general approach obeyed for the introduction of function spaces in Definition \ref{stepanov-approximation} and Definition \ref{stepanov-approximation1}.

We have the following:

\begin{example}\label{kakad321}
Suppose that $1\leq p<\infty,$ $c\in \{  z\in {\mathbb C} : |z|=1\}$ and $f\in L_{loc}^{p}({\mathbb R} : Y).$ The one-dimensional Bochner transform $\hat{f} : {\mathbb R} \rightarrow L^{p}([0,1] : Y)$ is defined by $[\hat{f}(t)](s):=f(t+s),$ $t\in {\mathbb R},$ $s\in [0,1];$ a function $f(\cdot)$ is said to be Stepanov-$(p,c)$-semi-periodic if and only if the function $\hat{f}(\cdot)$ is semi-$c$-periodic (\cite{nova-mono}). 

Suppose now that $Y={\mathbb R},$ the function $f(\cdot)$ is semi-$c$-periodic and can be analytically extended to a strip around the real axis. Then the function sign$(f(\cdot))$
is Stepanov-$(p,c)$-semi-periodic. In actual fact, there exists a sequence $(f_{k})$ of $c$-periodic functions which converges uniformly to the function $f(\cdot)$ on the real line. Then $\hat{f_{k}}: {\mathbb R} \rightarrow L^{p}([0,1] : {\mathbb R})$ is $c$-periodic and the required conclusion simply follows if we prove that, for every $\epsilon>0$, there exists an integer $k_{0}\in {\mathbb N}$ such that, for every $k\geq k_{0},$ we have
\begin{align}\label{smuk}
\int^{t+1}_{t}\bigl| \mbox{sign}\bigl( f_{k}(s)\bigr)- \mbox{sign}\bigl( f(s)\bigr) \bigr|^{p}\, ds \leq \epsilon,\quad t\in {\mathbb R}.
\end{align}
By the proof of \cite[Theorem 5.3.1]{188}, we have that 
there exists a sufficiently small real number $\epsilon_{0}>0$ such that $m(\{ x\in [t,t+1] : |f(x)|\leq  \epsilon_{0}\})\leq 2^{-p}\epsilon$ for all $t\in {\mathbb R}.$ Let $k_{0}\in {\mathbb N}$ be such that $|f_{k}(t)-f(t)|\leq \epsilon_{0}/2$ for all $k\geq k_{0}$ and $t\in {\mathbb R}.$ If $|f(s)|\geq \epsilon_{0},$ then we have $|f_{k}(s)|\geq \epsilon_{0}/2$ and $\mbox{sign}( f_{k}(s))= \mbox{sign}( f(s)) $ for all $k\geq k_{0}$ ($s\in {\mathbb R}$). Therefore,
\begin{align*}
\int^{t+1}_{t}&\bigl| \mbox{sign}\bigl( f_{k}(s)\bigr)- \mbox{sign}\bigl( f(s)\bigr) \bigr|^{p}\, ds 
\\&=\int_{\{ x\in [t,t+1] : |f(x)|\leq  \epsilon_{0}\}}\bigl| \mbox{sign}\bigl( f_{k}(s)\bigr)- \mbox{sign}\bigl( f(s)\bigr) \bigr|^{p}\, ds 
\\& \leq 2^{p}m\bigl(\bigl\{ x\in [t,t+1] : |f(x)|\leq  \epsilon_{0}\bigr\}\bigr)\leq \epsilon,
\end{align*}
so that \eqref{smuk} holds.
\end{example}

We continue our exposition with the following example:

\begin{example}\label{kakad}
Suppose that $1\leq p<\infty.$
It is well known that the function 
$$
f(t):=\sin\Biggl( \frac{1}{2+\cos t +\cos (\sqrt{2}t)}\Biggr),\quad t\in {\mathbb R}
$$\index{function!Lipschitz $S^{p}$-almost periodic}
is Stepanov-$p$-almost periodic ($S^{p}$-almost periodic); see \cite{nova-mono}. Now we will prove that the function $f(\cdot)$ is not Lipschitz $S^{p}$-almost periodic, i.e., that the Bochner transform $\hat{f} : {\mathbb R} \rightarrow L^{p}([0,1] : {\mathbb C})$ is not Lipschitz almost periodic (see \cite{stoja1} for the notion and more details). Suppose the contrary; then for each $\epsilon>0$ there exists a relatively dense set $R\subseteq {\mathbb R}$ such that, for every $\tau \in R$ and $t\in {\mathbb R}$, we have{\scriptsize 
\begin{align}\label{lips}
 \lim_{\delta \rightarrow 0+} \sup_{x\neq y;x,y\in [t-\delta,t+\delta]}\Biggl( \int^{1}_{0}\Biggl| \frac{\bigl[\sin \frac{1}{\zeta (s+x)}-\sin \frac{1}{\zeta (s+y)}\bigr]- \bigl[\sin \frac{1}{\zeta (s+x+\tau)}-\sin \frac{1}{\zeta(s+y+\tau)}\bigr]}{x-y} \Biggr|^{p} \, ds \Biggr)^{1/p}\leq \epsilon,
\end{align}}
where we have put $\zeta (t):=2+\cos t +\cos (\sqrt{2}t),$ $t\in {\mathbb R}.$ If $\tau \in R$ and $t\in {\mathbb R}$, then we can put $x=t$ and $y=t+\delta$ in \eqref{lips} in order to see that
\begin{align}\label{lips1}
 \lim_{\delta \rightarrow 0+}\Biggl( \int^{1}_{0}\Biggl| \frac{\bigl[\sin \frac{1}{\zeta (s+t)}-\sin \frac{1}{\zeta (s+t+\delta)}\bigr]- \bigl[\sin \frac{1}{\zeta (s+t+\tau)}-\sin \frac{1}{\zeta(s+t+\delta+\tau)}\bigr]}{\delta} \Biggr|^{p} \, ds \Biggr)^{1/p}\leq \epsilon.
\end{align}
Applying the Lagrange mean value theorem, we have
$$
\lim_{\delta \rightarrow 0+}\frac{\sin \frac{1}{\zeta (s+t)}-\sin \frac{1}{\zeta (s+t+\delta)}}{\delta}=-\frac{\zeta^{\prime}(s+t) }{\zeta^{2}(s+t)}\cos \frac{1}{\zeta (s+t)} ,\quad s\in [0,1],\ t\in {\mathbb R}.
$$
Keeping this equality and \eqref{lips1} in mind, the dominated convergence theorem implies that
\begin{align*}
\sup_{t\in {\mathbb R}}\Biggl( \int^{1}_{0}\Biggl| \frac{\zeta^{\prime}(s+t+\tau) }{\zeta^{2}(s+t+\tau)}\cos \frac{1}{\zeta (s+t+\tau)}-\frac{\zeta^{\prime}(s+t) }{\zeta^{2}(s+t)}\cos \frac{1}{\zeta (s+t)}  \Biggr|^{p}\, ds \Biggr)^{1/p}\leq \epsilon,
\end{align*}
which implies that the function 
$$
g(t):=\frac{\zeta^{\prime}(t) }{\zeta^{2}(t)}\cos \frac{1}{\zeta (t)}=\frac{\sin t+\sqrt{2}\sin(\sqrt{2}t)}{(2+\cos t+\cos (\sqrt{2}t))^{2}}\cos \frac{1}{2+\cos t+\cos (\sqrt{2}t)},\quad t\in {\mathbb R}
$$
is $S^{p}$-almost periodic. This cannot be true because the function $g(\cdot)$ is not Stepanov ($p$-)bounded, i.e.,
$$
\sup_{t\in {\mathbb R}}\int^{t+2\pi}_{t}|g(s)|\, ds=+\infty.
$$
Let us prove this. Applying the substitutions $v=\zeta(t)$ and $u=1/v$ after that, it readily follows that for each $t\in {\mathbb R}$ we have:
$$
\int^{t+2\pi}_{t}|g(s)|\, ds=\Biggl| \int^{\frac{1}{2+\cos t+\cos (\sqrt{2}t)}}_{\frac{1}{2+\cos (t+2\pi)+\cos (\sqrt{2}(t+2\pi))}} |\cos u|\, du \Biggr|.
$$
Therefore, there exist two finite real constants $c>0$ and $c_{1}>0$ such that
\begin{align}
\notag \int^{t+2\pi}_{t}& |g(s)|\, ds \geq c\Biggl \lfloor \frac{\Bigl|\frac{1}{2+\cos t+\cos (\sqrt{2}t)}-\frac{1}{2+\cos (t+2\pi)+\cos (\sqrt{2}(t+2\pi))}\Bigr|}{\pi/2} \Biggr \rfloor
\\\notag& = c\Biggl \lfloor \frac{4|\sin (\sqrt{2}\pi)| \cdot |\sin(\sqrt{2}(t+\pi))|}{\pi \cdot [2+\cos t+\cos (\sqrt{2}t)]\cdot [2+\cos (t+2\pi)+\cos (\sqrt{2}(t+2\pi))]}  \Biggr \rfloor
\\\label{mafina}& \geq c\Biggl \lfloor \frac{|\sin (\sqrt{2}\pi)| \cdot |\sin(\sqrt{2}(t+\pi))|}{\pi \cdot [2+\cos t+\cos (\sqrt{2}t)]}  \Biggr \rfloor \geq c_{1}\frac{|\sin(\sqrt{2}(t+\pi))|}{2+\cos t+\cos (\sqrt{2}t)},\quad t\rightarrow +\infty.
\end{align}
We proceed by using some arguments given in the remarkable paper \cite{nawrocki} by A. Nawrocki. Let $Q_{2m}$ be the odd natural number from the proof of \cite[Theorem 4]{nawrocki}; then we have $\lim_{m\rightarrow +\infty}Q_{2m}=+\infty$ and
\begin{align}\label{mafin1}
\frac{1}{2+\cos x_{m}+\cos (\sqrt{2}x_{m})}\geq 2\frac{x_{m}^{2}}{\pi^{4}},\quad m\in {\mathbb N},
\end{align}
where we have put $x_{m}:=Q_{2m}\pi$ ($m\in {\mathbb N}$). Suppose that $\sqrt{2}(Q_{2m}+1)\pi=k_{m}\pi +a_{m},$ where $k_{m}\in {\mathbb N}$ and $|a_{m}|\leq \pi/2$ ($m\in {\mathbb N}$). Since
$$
\Biggl| \sqrt{2} -\frac{k_{m}}{Q_{2m}+1}\Biggr|=\frac{|a_{m}|}{(Q_{2m}+1)\pi} ,\quad m\in {\mathbb N},
$$
the Liouville theorem (see, e.g., \cite[Theorem 1]{nawrocki}) implies the existence of a finite real number $d>0$ such that
$|a_{m}|\geq d\pi/(Q_{2m}+1)$ for all $m\in {\mathbb N}.$ Then we can apply the Jensen inequality in order to see that
\begin{align}\label{mafin12}
\bigl| \sin(a_{m}) \bigr|\geq \frac{2}{\pi}\bigl| a_{m} \bigr|\geq \frac{2d}{1+Q_{2m}}=\frac{2d}{1+(x_{m}/\pi)},\quad m\in {\mathbb N}.
\end{align}
Keeping in mind \eqref{mafin1}-\eqref{mafin12}, we get 
$$
\limsup_{m\rightarrow +\infty}\frac{|\sin(\sqrt{2}(x_{m}+\pi))|}{2+\cos x_{m}+\cos (\sqrt{2}x_{m})}=+\infty,
$$
contradicting \eqref{mafina}.

Finally, we would like to ask whether the function $f(\cdot)$ is $S^{p}$-almost periodic in variation, i.e., whether the function $\hat{f}(\cdot)$ is almost periodic in variation.
\end{example}

It is well known that any uniformly continuous Stepanov-$p$-almost periodic function $F: {\mathbb R}^{n}\rightarrow Y$ is almost periodic (\cite{nova-selected}). The interested reader may try to prove a metrical analogue of this result.

Before proceeding to the study of Weyl metrical approximations, we will state and prove the following simple result:

\begin{prop}\label{ns-bod}
Suppose that $F: \Lambda \times X \rightarrow {\mathbb C}$, $c\in {\mathbb C} \setminus \{0\}$ and the following conditions hold:
\begin{itemize}
\item[(i)] The function $\phi(\cdot)$ is monotonically increasing and there exists a function $\varphi : [0,\infty) \rightarrow [0,\infty)$ such that $\phi(xy)\leq \phi(x)\varphi(y)$ for all $x,\ y\geq 0.$
\item[(ii)] For every set $B\in {\mathcal B},$ there exists a finite real constant $c_{B}>0$ such that $|F({\bf t};x)|\geq c_{B},$ ${\bf t}\in \Lambda,$ $x\in B.$
\item[(iii)] The assumptions $0\leq f\leq g$ and $g\in P_{\Omega}$ ($g\in P$) imply $f\in P_{\Omega}$ ($f\in P$).
\item[(iv)] Condition \emph{(C1)} holds for $P_{\Omega}$ and $P.$
\end{itemize}
Then we have the following:
\begin{itemize}
\item[(a)] If $F(\cdot;\cdot)$ is Stepanov
$(\phi,{\mathrm R}, {\mathcal B},\phi,{\mathbb F},{\mathcal P})$-normal, then the function $(1/F)(\cdot;\cdot)$ is Stepanov
$(\phi,{\mathrm R}, {\mathcal B},\phi,{\mathbb F},{\mathcal P})$-normal.
\item[(b)] If $F\in S^{(\phi,{\mathbb F},c{\rm I},{\mathcal P}_{\Omega},{\mathcal P})}_{\Omega,\Lambda',{\mathcal B}}(\Lambda\times X :Y),$ then $(1/F)\in S^{(\phi,{\mathbb F},c^{-1}{\rm I},{\mathcal P}_{\Omega},{\mathcal P})}_{\Omega,\Lambda',{\mathcal B}}(\Lambda\times X :Y).$
\end{itemize}
\end{prop}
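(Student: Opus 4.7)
The main idea is to exploit the elementary identity $1/a - 1/b = (b-a)/(ab)$ together with the uniform lower bound $|F(\cdot;x)| \geq c_{B}$ from (ii) to dominate the difference of reciprocals pointwise by a constant multiple of the difference of the original functions. The submultiplicativity of $\phi$ then pulls that constant outside $\phi$, and conditions (iii)--(iv) transport the resulting pointwise bound to the iterated $P_{\Omega}$- and $P$-norms. From this, both (a) and (b) reduce to the corresponding statements already known for $F$ itself.

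For (a), fix $B \in {\mathcal B}$ and a sequence $({\bf b}_{k})$ in ${\mathrm R}$, and let $({\bf b}_{k_{m}})$ be the subsequence provided by the Stepanov $(\phi,{\mathrm R},{\mathcal B},\phi,{\mathbb F},{\mathcal P})$-normality of $F$. Writing ${\bf u} = \cdot + \cdot\cdot + {\bf b}_{k_{m}}$ and ${\bf v} = \cdot + \cdot\cdot + {\bf b}_{k_{m'}}$, the elementary identity combined with (ii) gives
\begin{align*}
\left|\frac{1}{F({\bf u};x)} - \frac{1}{F({\bf v};x)}\right| = \frac{|F({\bf u};x) - F({\bf v};x)|}{|F({\bf u};x)|\,|F({\bf v};x)|} \leq \frac{1}{c_{B}^{2}}\,\bigl|F({\bf u};x) - F({\bf v};x)\bigr|,
\end{align*}
and then monotonicity of $\phi$ together with $\phi(xy) \leq \phi(x)\varphi(y)$ applied with $y = 1/c_{B}^{2}$ yields
\begin{align*}
\phi\bigl(|1/F({\bf u};x) - 1/F({\bf v};x)|\bigr) \leq \varphi\bigl(1/c_{B}^{2}\bigr)\,\phi\bigl(|F({\bf u};x) - F({\bf v};x)|\bigr).
\end{align*}
Multiplying by ${\mathbb F}(\cdot,\cdot\cdot)$ and invoking (iii) to stay inside $P_{\Omega}$ and $P$, together with (C1) applied successively in $P_{\Omega}$ and in $P$ to extract the scalar factor $\varphi(1/c_{B}^{2})$, produces a constant $C>0$ (depending only on $c_{B}$, $\varphi$, and the $d$-constants of (C1)) such that the iterated-norm quantity for $1/F$ is at most $C$ times the corresponding quantity for $F$. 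Choosing $m_{0}$ from the Stepanov-normality of $F$ so that the $F$-quantity is less than $\epsilon/C$ for $m, m' \geq m_{0}$ completes (a).

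For (b), the relation $c{\rm I}$ is single-valued with $(c{\rm I})(y) = \{cy\}$, so the hypothesis $F \in S^{(\phi,{\mathbb F},c{\rm I},{\mathcal P}_{\Omega},{\mathcal P})}_{\Omega,\Lambda',{\mathcal B}}(\Lambda\times X :Y)$ furnishes, for each $\epsilon > 0$, $B \in {\mathcal B}$, and ${\bf t}_{0} \in \Lambda'$, a length $L>0$ and a shift $\tau \in B({\bf t}_{0},L) \cap \Lambda'$ controlling the iterated norm of $\phi(|F(\tau + \cdot + \cdot\cdot;x) - cF(\cdot + \cdot\cdot;x)|)$. To witness $(1/F) \in S^{(\phi,{\mathbb F},c^{-1}{\rm I},{\mathcal P}_{\Omega},{\mathcal P})}_{\Omega,\Lambda',{\mathcal B}}(\Lambda\times X :Y)$ at the same $\tau$, set $H_{x}({\bf u}) := c^{-1}/F({\bf u};x)$, which lies in $(c^{-1}{\rm I})((1/F)({\bf u};x))$ and is well defined on $\Omega + \Lambda$ thanks to (ii). The identity
\begin{align*}
\left|\frac{1}{F(\tau + s + t;x)} - \frac{c^{-1}}{F(s+t;x)}\right| = \frac{|F(\tau + s + t;x) - cF(s+t;x)|}{|c|\,|F(\tau + s + t;x)|\,|F(s+t;x)|} \leq \frac{1}{|c|c_{B}^{2}}\,\bigl|F(\tau + s + t;x) - cF(s+t;x)\bigr|
\end{align*}
then feeds into the same $\phi$--$\varphi$--(iii)--(C1) chain as in (a), converting the given bound on the right-hand side into the desired bound on the left.

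The only slightly delicate point is the tracking of the constants through the two pseudometric spaces, since (iii) by itself only guarantees membership; this is smooth under the solid-space heuristic of Remark \ref{smjeh}, which is precisely the setting in which (iii) and (C1) combine to yield the expected norm-monotonicity. Identifying $H_{x}$ and verifying $H_{x} \in (c^{-1}{\rm I})((1/F))$ is immediate from (ii), and no further obstacle is anticipated; the argument is a metrical-pseudometric adaptation of the standard observation that $1/F$ inherits approximation properties from $F$ whenever $F$ is bounded away from zero.
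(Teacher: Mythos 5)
Your proof of (a) is correct and coincides with the paper's own argument: the same reciprocal-difference identity, the pointwise lower bound $c_{B}$, the submultiplicativity of $\phi$ to extract the factor $\varphi(c_{B}^{-2})$, and two successive applications of (C1) in $P_{\Omega}$ and in $P$. The paper proves only (a) and explicitly omits (b); your verification of (b) via the identity $|1/A-c^{-1}/B|=|A-cB|/(|c|\,|A|\,|B|)$ is the natural analogue and is correct, and your observation that hypothesis (iii) by itself yields only membership rather than norm monotonicity (so that one implicitly works in the solid-space setting of Remark \ref{smjeh}) applies equally to the paper's own computation.
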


\begin{proof}
We will prove only (a). Due to (ii), we have $|F({\bf t};x)|\neq 0,$ ${\bf t}\in \Lambda,$ $x\in X.$
Let a set $B\in {\mathcal B}$ and a sequence $({\bf b}_{k})_{k\in {\mathbb N}}$ in ${\mathrm R}$ be given. Then we know that there exists a subsequence $({\bf b}_{k_{m}})_{m\in {\mathbb N}}$ of $({\bf b}_{k})_{k\in {\mathbb N}}$ such that, for every $\epsilon>0,$ there exists an integer $m_{0}\in {\mathbb N}$ such that, for every integers $m,\ m'\geq m_{0},$ we have \eqref{agape-laf1}. Denote by $d_{\Omega}>0$ and $d>0$ the corresponding constants from condition (C1) for the spaces $P_{\Omega}$ and $P,$ respectively. Then we have:{\scriptsize
\begin{align*}
&\sup_{x\in B}\Biggl\| \Bigl\| {\mathbb F}(\cdot,\cdot \cdot ) \phi\Bigl( \bigl| (1/F)(\cdot+\cdot \cdot+{\bf b}_{k_{m}};x)-(1/F)(\cdot+\cdot \cdot+{\bf b}_{k_{m'}};x)\bigr|\Bigr) \Bigr\|_{P_{\Omega}}\Biggr\|_{P}
\\& =\sup_{x\in B}\Biggl\| \Biggl\| {\mathbb F}(\cdot,\cdot \cdot ) \phi\Biggl( \frac{\bigl| F(\cdot+\cdot \cdot+{\bf b}_{k_{m}};x)-F(\cdot+\cdot \cdot+{\bf b}_{k_{m'}};x)\bigr|}{\bigl| F(\cdot+\cdot \cdot+{\bf b}_{k_{m}};x) \cdot F(\cdot+\cdot \cdot+{\bf b}_{k_{m'}};x)\bigr| }\Biggr) \Biggr\|_{P_{\Omega}}\Biggr\|_{P}
\\& \leq \sup_{x\in B}\Biggl\| \Bigl\| {\mathbb F}(\cdot,\cdot \cdot ) \varphi\bigl(c_{B}^{-2}\bigr)\phi\Bigl( \bigl| F(\cdot+\cdot \cdot+{\bf b}_{k_{m}};x)-F(\cdot+\cdot \cdot+{\bf b}_{k_{m'}};x)\bigr|\Bigr) \Bigr\|_{P_{\Omega}}\Biggr\|_{P}
\\& \leq \sup_{x\in B}\Biggl\| d_{\Omega}\bigl( 1+\varphi\bigl(c_{B}^{-2}\bigr)\bigr) \Bigl\| {\mathbb F}(\cdot,\cdot \cdot ) \phi\Bigl( \bigl| F(\cdot+\cdot \cdot+{\bf b}_{k_{m}};x)-F(\cdot+\cdot \cdot+{\bf b}_{k_{m'}};x)\bigr|\Bigr) \Bigr\|_{P_{\Omega}}\Biggr\|_{P}
\\ & \leq d\Bigl(1+ d_{\Omega}\bigl( 1+\varphi\bigl(c_{B}^{-2}\bigr)\bigr)\Bigr)\sup_{x\in B}\Biggl\| \Bigl\| {\mathbb F}(\cdot,\cdot \cdot ) \phi\Bigl( \bigl| F(\cdot+\cdot \cdot+{\bf b}_{k_{m}};x)-F(\cdot+\cdot \cdot+{\bf b}_{k_{m'}};x)\bigr|\Bigr) \Bigr\|_{P_{\Omega}}\Biggr\|_{P},
\end{align*}}
which simply implies the required.
\end{proof}

\noindent {\bf 2. Weyl metrical approximations}.
In this part, we will assume that condition (L) holds, where:\index{condition!(L)}
\begin{itemize}
\item[(L)] Let $\Omega$ be any compact subset of ${\mathbb R}^{n}$ with positive Lebesgue measure such that $\Lambda +\Omega \subseteq \Lambda.$ 
We assume that for each real number $l>0$ we have that
$P_{l} \subseteq [0,\infty)^{l\Omega},$  the zero function belongs to $P_{l}$, and 
${\mathcal P}_{l}=(P_{l},d_{l})$ is a pseudometric space. Let $P \subseteq [0,\infty)^{\Lambda},$ let the zero function belong to $P$, and let 
${\mathcal P}=(P,d)$ be a pseudometric space.
\end{itemize}

Now we are ready to
introduce the following notion:

\begin{defn}\label{weyl-approximation}\index{space!$e-{\mathcal B}-W^{p}(\Lambda \times X : Y)$}
Suppose that (L) holds, $\phi : [0,\infty) \rightarrow [0,\infty)$ and ${\mathbb F} : (0,\infty) \times  (\cup_{l>0}l\Omega) \times \Lambda \rightarrow (0,\infty).$
Then we say that the function $F(\cdot;\cdot)$ belongs to the class
$e-({\mathbb F},\phi,{\mathcal B})-W^{{\mathcal P}_{\cdot}}(\Lambda \times X : Y)$ [$e-({\mathbb F},\phi,{\mathcal B})-W^{{\mathcal P}_{\cdot}}_{\rho}(\Lambda \times X : Y);$ $e-({\mathbb F},\phi,{\mathcal B})_{j\in {\mathbb N}_{n}}-W^{{\mathcal P}_{\cdot}}_{\rho_{j}}(\Lambda \times X : Y)$] if and only if
for every $B\in {\mathcal B}$ and for every $\epsilon>0$ there exist a real number $l_{0}>0$ and a trigonometric polynomial [$\rho$-periodic function; $(\rho_{j})_{j\in {\mathbb N}_{n}}$-periodic function] $P(\cdot;\cdot)$ such that
\begin{align*}
\sup_{x\in B}\Biggl\| \Bigl\| {\mathbb F}(l,\cdot,\cdot \cdot) \phi\Bigl(\bigl\| P(\cdot \cdot +\cdot ;x)-F(\cdot \cdot+\cdot ;x) \bigr\|_{Y}\Bigr)\Bigr\|_{P_{l}}\Biggr\|_{P}<\epsilon,\quad l\geq l_{0}.
\end{align*}
\end{defn}\index{space!$e-({\mathbb F},\phi,{\mathcal B})-W^{{\mathcal P}_{\cdot}}(\Lambda \times X : Y)$}\index{space!$e-({\mathbb F},\phi,{\mathcal B})-W^{{\mathcal P}_{\cdot}}_{\rho}(\Lambda \times X : Y)$}\index{space!$e-({\mathbb F},\phi,{\mathcal B})_{j\in {\mathbb N}_{n}}-W^{{\mathcal P}_{\cdot}}_{\rho_{j}}(\Lambda \times X : Y)$}

The usual notion of class $e-{\mathcal B}-W^{{\mathcal P}_{\cdot}}(\Lambda \times X : Y)$,
introduced and analyzed in \cite[Subsection 6.3.1]{nova-selected}, is obtained by plugging 
${\mathbb F}(l,\cdot, \cdot \cdot)\equiv l^{-n/p},$
$P=L^{\infty}(\Lambda)$ and $P_{l}=L^{p}(l\Omega)$ for all $l>0.$ The classes $e-({\mathbb F},\phi,{\mathcal B})-W^{{\mathcal P}_{\cdot}}_{\omega,\rho}(\Lambda \times X : Y)$ and $e-({\mathbb F},\phi,{\mathcal B})_{j\in {\mathbb N}_{n}}-W^{{\mathcal P}_{\cdot}}_{\omega_{j},\rho_{j}}(\Lambda \times X : Y)$
have not been considered elsewhere even in the one-dimensional setting, with this choice of metric spaces.

Besides the notion introduced in Definition \ref{weyl-approximation}, we will consider the following ones (cf. also Definition \ref{stepanov-approximation1}):

\begin{defn}\label{weyl-approximation1}
Suppose that (L) holds, $\phi : [0,\infty) \rightarrow [0,\infty),$ and  ${\mathbb F} : (0,\infty) \times  (\cup_{l>0}l\Omega) \times \Lambda \rightarrow (0,\infty).$ 
\begin{itemize}
\item[(i)]\index{function!Weyl
$(\phi,{\mathrm R}, {\mathcal B},\phi,{\mathbb F},{\mathcal P})$-normal}
Suppose that ${\mathrm R}$ is any collection of sequences in $\Lambda''.$ Then we say that the function $F(\cdot;\cdot)$ is Weyl
$(\phi,{\mathrm R}, {\mathcal B},\phi,{\mathbb F},{\mathcal P})$-normal
if and only if for every set $B\in {\mathcal B}$ and for every sequence $({\bf b}_{k})_{k\in {\mathbb N}}$ in ${\mathrm R}$ there exists a subsequence $({\bf b}_{k_{m}})_{m\in {\mathbb N}}$ of $({\bf b}_{k})_{k\in {\mathbb N}}$ such that, for every $\epsilon>0,$ there exists an integer $m_{0}\in {\mathbb N}$ such that, for every integers $m,\ m'\geq m_{0},$ we have 
\begin{align}\label{agape-laf1}
\limsup_{l\rightarrow +\infty}\sup_{x\in B}\Biggl\| \Bigl\| {\mathbb F}(l,\cdot,\cdot \cdot ) \phi\Bigl( \bigl\| F(\cdot+\cdot \cdot+{\bf b}_{k_{m}};x)-F(\cdot+\cdot \cdot+{\bf b}_{k_{m'}};x)\bigr\|_{Y}\Bigr) \Bigr\|_{P_{l}}\Biggr\|_{P}<\epsilon .
\end{align}
\item[(ii)] By $e-W^{(\phi,{\mathbb F},\rho,{\mathcal P}_{\Omega},{\mathcal P})}_{\Omega,\Lambda',{\mathcal B}}(\Lambda\times X :Y)$ we denote the set consisting of all functions $F : \Lambda \times X \rightarrow Y$ such that, for every $\epsilon>0$ and $B\in {\mathcal B},$ there exist two finite real numbers
$l>0$
and
$L>0$ such that for each ${\bf t}_{0}\in \Lambda'$ there exists $\tau \in B({\bf t}_{0},L)\cap \Lambda'$ such that, for every $x\in B,$ the mapping ${\bf u}\mapsto G_{x}({\bf u})\in \rho( F({\bf u};x)),$ ${\bf u}\in  (\cup_{l>0}l\Omega) +\Lambda$ is well defined, and
\begin{align*}
\sup_{x\in B}\Biggl\|{\Bigl\| \mathbb F}(l,\cdot,\cdot \cdot)\phi\Bigl( \bigl\| F({\bf \tau}+\cdot+\cdot \cdot;x)-G_{x}(\cdot+\cdot \cdot)\bigr\|_{Y}\Bigr) \Bigr\|_{P_{l}} \Biggr\|_{P}<\epsilon.
\end{align*}\index{space!$e-W^{(\phi,{\mathbb F},\rho,{\mathcal P}_{\Omega},{\mathcal P})}_{\Omega,\Lambda',{\mathcal B}}(\Lambda\times X :Y)$}
\item[(iii)] By $W^{(\phi,{\mathbb F},\rho,{\mathcal P}_{\Omega},{\mathcal P})}_{\Omega,\Lambda',{\mathcal B}}(\Lambda\times X :Y)$ we denote the set consisting of all functions $F : \Lambda \times X \rightarrow Y$ such that, for every $\epsilon>0$ and $B\in {\mathcal B},$ there exists a finite real number
$L>0$ such that for each ${\bf t}_{0}\in \Lambda'$ there exists $\tau \in B({\bf t}_{0},L)\cap \Lambda'$ such that, for every $x\in B,$ the mapping ${\bf u}\mapsto G_{x}({\bf u})\in \rho( F({\bf u};x)),$ ${\bf u}\in  (\cup_{l>0}l\Omega) +\Lambda$ is well defined, and
\begin{align*}
\limsup_{l\rightarrow +\infty}\sup_{x\in B}\Biggl\|{\Bigl\| \mathbb F}(l,\cdot,\cdot \cdot)\phi\Bigl( \bigl\| F({\bf \tau}+\cdot+\cdot \cdot;x)-G_{x}(\cdot+\cdot \cdot)\bigr\|_{Y}\Bigr) \Bigr\|_{P_{l}} \Biggr\|_{P}<\epsilon.
\end{align*}
\end{itemize}
\end{defn}\index{space!$W^{(\phi,{\mathbb F},\rho,{\mathcal P}_{\Omega},{\mathcal P})}_{\Omega,\Lambda',{\mathcal B}}(\Lambda\times X :Y)$}

The notion introduced in Definition \ref{weyl-approximation} is stronger than the notion introduced in Definition \ref{weyl-approximation1};
the interested reader may simply clarify some reasons justifying this (cf. also Proposition \ref{rep}). As already emphasized for the Stepanov metrical approximations, a uniformly recurrent analogue of the notion introduced in Definition \ref{weyl-approximation1}(ii)-(iii) can be also analyzed.

Concerning the linear structure of introduced function spaces, we will only note the following:  Suppose that the function $\phi(\cdot)$ is monotonically increasing and there exists a finite real constant $c>0$ such that $\phi(x+y)\leq c[\phi(x)+\phi(y)]$ for all $x,\ y\geq 0.$ Then we have the following:
\begin{itemize}
\item[(i)]
Equipped with the usual operations, the set of all strongly $(\phi,{\mathbb F},{\mathcal B},{\mathcal P})$-almost periodic functions forms a vector space provided that condition (C2) holds.
\item[(ii)]  Equipped with the usual operations, the set $e-({\mathbb F},{\mathcal B})-S^{{\mathcal P}_{\Omega}}(\Lambda \times X : Y)$ forms a vector space provided that condition (C2) holds and condition (C2) holds for the pseudometric space $P_{\Omega}.$
\item[(iii)] Equipped with the usual operations, the set $e-({\mathbb F},\phi,{\mathcal B})-W^{{\mathcal P}_{\cdot}}(\Lambda \times X : Y)$ forms a vector space provided that condition (C2) holds and condition (C2) holds for the pseudometric space $P_{l\Omega},$ for any $l>0.$
\end{itemize}

The interested reader may try to reformulate the statements of \cite[Proposition 3.5, Corollary 3.6]{metrical-weyl} in our new context. Concerning the embeddings of Stepanov classes introduced in this paper into the equi-Weyl classes of functions introduced in this paper, we will clarify the following result, only:

\begin{prop}\label{carlos}
Suppose that $\Lambda={\mathbb R}^{n}$ and the following conditions hold:
\begin{itemize}
\item[(i)] $P_{1}$ is a Banach space and condition \emph{(C0)} holds for $P_{1}.$
\item[(ii)] $P_{\Omega}=L^{p}_{\nu}(\Omega : [0,\infty))$ for some $p\in [1,\infty)$  and a Lebesgue measurable function $\nu : \Omega \rightarrow (0,\infty);$ $P=C_{0,w}(\Lambda : [0,\infty))$ for some function $w: \Lambda \rightarrow (0,\infty)$ such that the function $1/w(\cdot)$ is locally bounded.
\item[(iii)] There exists a real number $l_{0}>0$ such that, for every $l\geq l_{0},$ we have
\begin{align*}
\Biggl\| \sum_{k\in ({\mathbb N}_{0}^{\lceil l \rceil-1})^{n}}\frac{1}{w(\cdot \cdot +k)}\Biggr\|_{P_{1}}\leq \mbox{Const.}
\end{align*}
\item[(iv)] There exists a real number $l_{0}>0$ such that, for every $l\geq l_{0},$ we have
\begin{align*}
\Bigl\| {\mathbb F}_{1}(l,\cdot,\cdot \cdot) W(\cdot,\cdot \cdot;x) \Bigr\|_{P_{l}^{1}}\leq \sum_{k\in ({\mathbb N}_{0}^{\lceil l \rceil-1})^{n}} \Bigl\| {\mathbb F}(\cdot,\cdot \cdot +k)W(\cdot,\cdot \cdot +k;x)\Bigr\|_{P_{\Omega}},
\end{align*}
provided that the above terms are well-defined.
\end{itemize}
If $F\in e-({\mathbb F},{\mathcal B})-S^{{\mathcal P}_{\Omega}}(\Lambda \times X : Y)$, then $F\in e-({\mathbb F}_{1},\phi,{\mathcal B})-W^{{\mathcal P}_{\cdot}^{1}}(\Lambda \times X : Y).$
\end{prop}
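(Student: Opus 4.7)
The plan is to reduce the equi-Weyl approximation to the Stepanov one by a slicing argument, with conditions (iii) and (iv) tailored precisely to effect this reduction. Fix $B \in {\mathcal B}$ and $\epsilon > 0$. Since $F \in e-({\mathbb F},{\mathcal B})-S^{{\mathcal P}_{\Omega}}(\Lambda \times X : Y)$, for any $\epsilon' > 0$ (to be chosen at the end) one can pick a trigonometric polynomial $P(\cdot;\cdot)$ such that, writing
\[
G({\bf t};x) := \Bigl\|{\mathbb F}(\cdot,{\bf t})\phi\bigl(\|P({\bf t}+\cdot;x)-F({\bf t}+\cdot;x)\|_Y\bigr)\Bigr\|_{P_{\Omega}},
\]
one has $\sup_{x\in B}\|G(\cdot\cdot;x)\|_{P} < \epsilon'$. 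Since $P=C_{0,w}(\Lambda:[0,\infty))$ by hypothesis (ii), this unwraps as the pointwise estimate $G({\bf t};x)\le \epsilon'/w({\bf t})$ for all ${\bf t}\in\Lambda$ and all $x\in B$.

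Next, fix $l\ge l_0$ large enough that both (iii) and (iv) apply, and set $W(\cdot,{\bf t};x):=\phi(\|P({\bf t}+\cdot;x)-F({\bf t}+\cdot;x)\|_Y)$. Condition (iv) then gives the pointwise inequality in ${\bf t}\in\Lambda$
\[
\bigl\|{\mathbb F}_1(l,\cdot,{\bf t})W(\cdot,{\bf t};x)\bigr\|_{P^1_l}\le \sum_{k\in ({\mathbb N}_0^{\lceil l\rceil -1})^n} G({\bf t}+k;x)\le \epsilon'\sum_{k\in ({\mathbb N}_0^{\lceil l\rceil -1})^n}\frac{1}{w({\bf t}+k)},
\]
where the second inequality uses the pointwise bound on $G$. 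Taking the $P_1$-norm in ${\bf t}$, applying condition (C0) for $P_1$ to dominate the left-hand side by the right-hand side, and finally invoking (iii), I would obtain
\[
\sup_{x\in B}\Bigl\|\bigl\|{\mathbb F}_1(l,\cdot,{\bf t})W(\cdot,{\bf t};x)\bigr\|_{P^1_l}\Bigr\|_{P_1}\le e\cdot\epsilon'\cdot\mathrm{Const.},
\]
with the right-hand constant independent of $l\ge l_0$ and $x\in B$. Choosing $\epsilon':=\epsilon/(e\cdot\mathrm{Const.})$ then yields the required Weyl inequality for every $l\ge l_0$, whence $F\in e-({\mathbb F}_1,\phi,{\mathcal B})-W^{{\mathcal P}^1_\cdot}(\Lambda \times X : Y)$.

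The main obstacle is essentially bookkeeping: one must verify that every quantity above lives in the appropriate pseudometric space so that (C0) can be legally applied and the dominating sum can be pulled through $\|\cdot\|_{P_1}$. The structural heart is that (iv) decomposes the Weyl density on $l\Omega$ into a finite sum of Stepanov densities on $\Omega$ translated by integer vectors $k\in({\mathbb N}_0^{\lceil l\rceil -1})^n$, while (iii) delivers a uniform-in-$l$ bound for the reconstituting sum $\sum_k 1/w(\cdot\cdot +k)$; together with the weighted sup-norm structure from (ii), these are precisely the ingredients needed to mimic the classical embedding of Stepanov into equi-Weyl classes for $L^p$ spaces.
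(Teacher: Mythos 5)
Your argument is correct and coincides with the paper's own proof: both reduce the equi-Weyl estimate to the Stepanov one by applying (iv) to split the $P_{l}^{1}$-norm into a finite sum of translated $P_{\Omega}$-norms, use the $C_{0,w}$ structure of $P$ to convert the Stepanov hypothesis into the pointwise bound $G({\bf t};x)\leq \epsilon'/w({\bf t})$, and then invoke (C0) together with (iii) to control the resulting sum $\sum_{k}1/w(\cdot\cdot+k)$ uniformly in $l\geq l_{0}$. The only cosmetic difference is that you rescale $\epsilon'$ at the end, whereas the paper leaves the final bound as a constant multiple of $\epsilon$.
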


\begin{proof}
The proof simply follows from the prescribed assumptions and the next computation (the meaning of inequality in the fourth line of computation is clear from the context):{\scriptsize
\begin{align*}
&\sup_{x\in B}\Biggl\| \Bigl\| {\mathbb F}_{1}(l,\cdot,\cdot \cdot) \phi\Bigl(\bigl\| P(\cdot \cdot +\cdot ;x)-F(\cdot \cdot+\cdot ;x) \bigr\|_{Y}\Bigr)\Bigr\|_{P_{l^{1}}}\Biggr\|_{P^{1}}
\\& \leq j\sup_{x\in B}\Biggl\| \Bigl\| \sum_{k\in ({\mathbb N}_{0}^{\lceil l \rceil-1})^{n}} \Bigl\| {\mathbb F}(\cdot,\cdot \cdot +k) \phi\Bigl(\bigl\| P(\cdot \cdot +k+\cdot ;x)-F(\cdot \cdot+k+\cdot ;x) \bigr\|_{Y}\Bigr)\Bigr\|_{P_{\Omega}}\Biggr\|_{P^{1}}
\\& = j\sup_{x\in B}\Biggl\|  \sum_{k\in ({\mathbb N}_{0}^{\lceil l \rceil-1})^{n}}\Biggl( \int_{\Omega}\Bigl|{\mathbb F}({\bf u},\cdot \cdot +k) \phi\Bigl(\bigl\| P(\cdot \cdot +k+{\bf u} ;x)-F(\cdot \cdot+k+{\bf u} ;x) \bigr\|_{Y}\Bigr)\Bigr|^{p}\nu^{p}({\bf u})\, d{\bf u}\Biggr)^{1/p}\Biggr\|_{P^{1}}
\\& \leq j\sup_{x\in B}\Biggl\|   \sum_{k\in ({\mathbb N}_{0}^{\lceil l \rceil-1})^{n}}\frac{\epsilon}{w(\cdot \cdot +k)}  \Biggr\|_{P^{1}}\leq j\cdot \epsilon \cdot \mbox{Const.}\, .
\end{align*}}
\end{proof}

\subsection{Besicovitch and Doss metrical approximations}\label{jet}

In this subsection, we will assume that condition (B) holds, where: \index{condition!(B)}
\begin{itemize}
\item[(B)] For each real number $t>0$ we have 
$P_{t} \subseteq [0,\infty)^{\Lambda_{t}},$  the zero function belongs to $P_{t}$, and 
${\mathcal P}_{t}=(P_{t},d_{t})$ is a pseudometric space. 
\end{itemize}

We are ready to
introduce the following notion:

\begin{defn}\label{ebe}
Suppose that $F: \Lambda \times X \rightarrow Y,$ $\phi : [0,\infty) \rightarrow [0,\infty)$ and ${\mathbb F} : (0,\infty) \times \Lambda \rightarrow (0,\infty).$ Then we say that the function $F(\cdot;\cdot)$ belongs to the class $e-({\mathcal B},\phi,{\mathbb F})-B^{{\mathcal P}_{\cdot}}(\Lambda \times X : Y)$ [$e-({\mathcal B},\phi,{\mathbb F})-B^{{\mathcal P}_{\cdot}}_{\rho}(\Lambda \times X : Y);$ $e-({\mathcal B},\phi,{\mathbb F})_{j\in {\mathbb N}_{n}}-B^{{\mathcal P}_{\cdot}}_{\rho_{j}}(\Lambda \times X : Y)$] if and only if for each set $B\in {\mathcal B}$ there exists a sequence $(P_{k}(\cdot;\cdot))$ of trigonometric polynomials [$\rho$-periodic functions; $(\rho_{j})_{j\in {\mathbb N}_{n}}$-periodic functions] such that
\begin{align}\label{jew}
\lim_{k\rightarrow +\infty}\limsup_{t\rightarrow +\infty}\sup_{x\in B}\Bigl\| {\mathbb F}(t,\cdot) \phi\Bigl( \bigl\|F(\cdot;x)-P_{k}(\cdot;x)\bigr\|_{Y}\Bigr) \Bigr\|_{P_{t}}=0.
\end{align}
If $\phi(x)\equiv x,$ then we omit the term ``$\phi$'' from the notation; if $X=\{0\},$ then we omit the term ``${\mathcal B}$'' from the notation.
\end{defn}\index{space!$e-({\mathcal B},\phi,{\mathbb F})-B^{{\mathcal P}_{\cdot}}(\Lambda \times X : Y)$} \index{space!$e-({\mathcal B},\phi,{\mathbb F})-B^{{\mathcal P}_{\cdot}}_{\rho}(\Lambda \times X : Y)$} \index{space!$e-({\mathcal B},\phi,{\mathbb F})_{j\in {\mathbb N}_{n}}-B^{{\mathcal P}_{\cdot}}_{\rho_{j}}(\Lambda \times X : Y)$}

Immediately from definition, it follows that, for every $F\in e-({\mathcal B},\phi,{\mathbb F})-B^{{\mathcal P}_{\cdot}}(\Lambda \times X : Y)$ and $\lambda \in {\mathbb R}^{n},$ we have $e^{i \langle \lambda , \cdot \rangle}F\in e-({\mathcal B},\phi,{\mathbb F})-B^{{\mathcal P}_{\cdot}}(\Lambda \times X : Y);$ this also holds for the corresponding classes introduced in Definition \ref{strong-app}, Definition \ref{stepanov-approximation} and Definition \ref{weyl-approximation}. The class $e-({\mathcal B},\phi,{\mathbb F})-B^{p(\cdot)}(\Lambda \times X : Y)$ considered in \cite{multi-besik} is nothing else but the class $e-({\mathcal B},\phi,{\mathbb F})-B^{{\mathcal P}_{\cdot}}(\Lambda \times X : Y)$ with ${\mathbb F}(t,\cdot)\equiv {\mathbb F}(t)$ and $P_{t}=L^{p(\cdot)}(\Lambda_{t})$ for all $t>0.$ The classes $e-({\mathcal B},\phi,{\mathbb F})-B^{{\mathcal P}_{\cdot}}_{\omega,\rho}(\Lambda \times X : Y)$ and $e-({\mathcal B},\phi,{\mathbb F})_{j\in {\mathbb N}_{n}}-B^{{\mathcal P}_{\cdot}}_{\omega_{j},\rho_{j}}(\Lambda \times X : Y)$
have not been considered elsewhere even in the one-dimensional setting, with this choice of metric spaces.

We continue this part by extending the notion of Besicovitch$-({\mathrm R}, {\mathcal B},\phi,{\mathbb F})-B^{p(\cdot)}$-normality, introduced recently in \cite[Definition 2.11]{multi-besik}:\index{function!Besicovitch
$(\phi,{\mathrm R}, {\mathcal B},\phi,{\mathbb F},{\mathcal P})$-normal}

\begin{defn}\label{petmet}
Suppose that ${\mathrm R}$ is any collection of sequences in $\Lambda'',$ $F: \Lambda \times X \rightarrow Y,$ $\phi : [0,\infty) \rightarrow [0,\infty)$ and ${\mathbb F} : (0,\infty) \rightarrow (0,\infty).$ Then we say that the function $F(\cdot;\cdot)$ is Besicovitch
$(\phi,{\mathrm R}, {\mathcal B},\phi,{\mathbb F},{\mathcal P})$-normal
if and only if for every set $B\in {\mathcal B}$ and for every sequence $({\bf b}_{k})_{k\in {\mathbb N}}$ in ${\mathrm R}$ there exists a subsequence $({\bf b}_{k_{m}})_{m\in {\mathbb N}}$ of $({\bf b}_{k})_{k\in {\mathbb N}}$ such that, for every $\epsilon>0,$ there exists an integer $m_{0}\in {\mathbb N}$ such that, for every integers $m,\ m'\geq m_{0},$ we have 
\begin{align*}
\limsup_{t\rightarrow +\infty}\sup_{x\in B}\Biggl\|{\mathbb F}(t,\cdot)\phi\Bigl( \bigl\| F(\cdot+{\bf b}_{k_{m}};x)-F(\cdot+{\bf b}_{k_{m'}};x)\bigr\|_{Y} \Bigr)\Biggr\|_{P_{t}} <\epsilon.
\end{align*}
\end{defn}

The interested reader may try to clarify some assumptions under which any function $e-({\mathbb F},\phi,{\mathcal B})-W^{{\mathcal P}_{\cdot}}(\Lambda \times X : Y)$ [$e-({\mathbb F},\phi,{\mathcal B})_{j\in {\mathbb N}_{n}}-W^{{\mathcal P}_{\cdot}}_{{\rm I}}(\Lambda \times X : Y)$] is Besicovitch
$(\phi,{\mathrm R}, {\mathcal B},\phi,{\mathbb F},{\mathcal P})$-normal; as is well known, the converse statement does not hold (cf. \cite[Proposition 2.12, Example 2.15]{multi-besik} and Proposition \ref{rep}). A metrical analogue of \cite[Proposition 2.13]{multi-besik} can be formulated without any substantial difficulties; for simplicity, we will not consider here any Bohr analogue of the notion introduced in Definition \ref{ebe} and Definition \ref{petmet}.

The following notion generalizes the notion introduced recently in \cite[Definition 1]{doss-rn}: 

\begin{defn}\label{prespansko} 
Let $\phi : [0,\infty) \rightarrow [0,\infty)$ and ${\mathbb F} : (0,\infty) \times \Lambda \rightarrow (0,\infty).$
\begin{itemize}
\item[(i)] Suppose that the function $F : \Lambda \times X \rightarrow Y$ satisfies that ${\mathbb F}(t;\cdot)\phi(\| F(\cdot;x)\|_{Y})\in P_{t}$ for all $t>0$ and $x\in X.$ Then we say that the function $F(\cdot;\cdot)$ is Besicovitch-$({\mathcal P},\phi,{\mathbb F},{\mathcal B})$-bounded if and only if, for every $B\in {\mathcal B},$ there exists a finite real number $M_{B}>0$ such that
\begin{align*}
\limsup_{t\rightarrow +\infty}\sup_{x\in B}\Bigl\|{\mathbb F}(t;\cdot)\phi \bigl(\| F(\cdot ;x)\|_{Y}\bigr)\Bigr\|_{P_{t}} \leq M_{B}.
\end{align*}
\item[(ii)] Suppose that the function $F : \Lambda \times X \rightarrow Y$ satisfies that ${\mathbb F}(t;\cdot)\phi(\| F(\cdot +\tau ;x)-y_{\cdot;x}\|_{Y})\in P_{t}$ for all $t>0,$ $x\in X,$ $\tau \in \Lambda'$
and $y_{\cdot;x}\in \rho(F(\cdot;x)).$
\begin{itemize}
\item[(a)] We say that the function $F : \Lambda \times X \rightarrow Y$ is Besicovitch-$({\mathcal P},\phi,{\mathbb F},{\mathcal B},\Lambda',\rho)$-continuous if and only if, for every $B\in {\mathcal B}$ as well as for every $t>0,$ $x\in B$ and $\cdot \in \Lambda_{t},$ we have the existence of an element $y_{\cdot;x}\in \rho (F(\cdot;x))$ such that
\begin{align*}
\lim_{\tau \rightarrow 0,\tau \in \Lambda'}\limsup_{t\rightarrow +\infty}\sup_{x\in B}\Bigl\| {\mathbb F}(t;\cdot)\phi \bigl(\| F(\cdot +\tau ;x)-y_{\cdot;x}\|_{Y}\bigr)\Bigr\|_{P_{t}}=0.
\end{align*}
\item[(b)] We say that the function $F(\cdot;\cdot)$ is Doss-$({\mathcal P},\phi,{\mathbb F},{\mathcal B},\Lambda',\rho)$-almost periodic if and only if,
for every $B\in {\mathcal B}$ and $\epsilon>0,$ there exists $l>0$ such that for each ${\bf t}_{0}\in \Lambda'$ there exists a point $\tau \in B({\bf t}_{0},l) \cap \Lambda'$ such that, for every $t>0,$ $x\in B$ and $\cdot \in \Lambda_{t},$ we have the existence of an element $y_{\cdot;x}\in \rho (F(\cdot;x))$ such that
\begin{align*}
\limsup_{t\rightarrow +\infty}\sup_{x\in B}\Bigl\|{\mathbb F}(t;\cdot)\phi \bigl(\| F(\cdot +\tau ;x)-y_{\cdot;x}\|_{Y}\bigr)\Bigr\|_{P_{t}}<\epsilon.
\end{align*}
\item[(c)] We say that the function $F(\cdot;\cdot)$ is  Doss-$({\mathcal P},\phi,{\mathbb F},{\mathcal B},\Lambda',\rho)$-uniformly recurrent if and only if,
for every $B\in {\mathcal B},$ there exists a sequence $(\tau_{k})\in \Lambda'$ such that, for every $t>0,$ $x\in B$ and $\cdot \in \Lambda_{t},$ we have the existence of an element $y_{\cdot;x}\in \rho (F(\cdot;x))$ such that
\begin{align*}
\lim_{k\rightarrow +\infty}\limsup_{t\rightarrow +\infty}\sup_{x\in B}\Bigl\| {\mathbb F}(t;\cdot) \phi \bigl(\| F(\cdot +\tau_{k} ;x)-y_{\cdot;x}\|_{Y}\bigr)\Bigr\|_{P_{t}}=0.
\end{align*}
\end{itemize}
\end{itemize}
\end{defn}\index{function!Besicovitch-$({\mathcal P},\phi,{\mathbb F},{\mathcal B})$-bounded}\index{function!Besicovitch-$({\mathcal P},\phi,{\mathbb F},{\mathcal B},\Lambda',\rho)$-continuous}\index{function!Doss-$({\mathcal P},\phi,{\mathbb F},{\mathcal B},\Lambda',\rho)$-almost periodic}
\index{function!Doss-$({\mathcal P},\phi,{\mathbb F},{\mathcal B},\Lambda',\rho)$-uniformly recurrent}

Let us note that the statement of Proposition \ref{nbm} can be simply reformulated for Doss-$({\mathcal P},\phi,{\mathbb F},{\mathcal B},\Lambda',\rho)$-almost periodic type functions (cf. also \cite[Proposition 1]{doss-rn}).
Furthermore, the following result can be deduced following the lines of proof of \cite[Proposition 2.4]{multi-besik} (for simplicity, we assume here that the function ${\mathbb F}$ does not depend on the second argument):

\begin{prop}\label{anat-metrical}
Suppose that ${\mathcal B}$ consists of bounded subsets of $X$, $F : \Lambda \times X \rightarrow Y$ and, for every fixed element $x\in X,$ the function $F(\cdot;x)$ is Lebesgue measurable. Suppose, further, that the function $\phi(\cdot)$ is monotonically increasing and there exists a finite real constant $c>0$ such that $\phi(x+y)\leq c[\phi(x)+\phi(y)]$ for all $x,\ y\geq 0.$
\begin{itemize}
\item[(i)]
Let the following conditions hold:
\begin{itemize}
\item[(a)] For every $t>0,$ $P_{t}$ contains all positive constants, and for every real number $d>0,$ there exist two real numbers $d'>0$ and $t_{0}>0$ such that, for every $t\geq t_{0},$ we have $\|d\|_{P_{t}}\leq d'.$
\item[(b)] There exist two real numbers $t_{1}>0$ and $M>0$ such that ${\mathbb F}(t)\| 1\|_{P_{t}}\leq M,$ $t\geq t_{1}.$
\item[(c)]  There exist two real numbers $t_{2}>0$ such that condition \emph{(C2)} holds for any pseudometric space $P_{t},$ $t\geq t_{2}.$ 
\end{itemize}
Then, any function $F\in e-({\mathcal B},\phi,{\mathbb F})-B^{{\mathcal P}_{\cdot}}(\Lambda \times X : Y)$ is Besicovitch-$({\mathcal P},\phi,{\mathbb F},{\mathcal B})$-bounded. 
\item[(ii)] Suppose that 
$\phi(\cdot)$ is continuous at the point $t=0$.
Let the following conditions hold:
\begin{itemize}
\item[(a)] For every $t>0,$ the pseudometric space $P_{t}$ contains all positive constants and $\lim_{\epsilon \rightarrow 0+}\| \epsilon\|_{P_{t}}=0.$
\item[(b)] There exist two real numbers $t_{1}>0$ and $c>0$ such that condition \emph{(C0)} holds for $P_{t},$ $t\geq t_{1}.$
\item[(c)] There exist two real numbers $t_{2}>0$ such that condition \emph{(C3)} holds for any pseudometric space $P_{t},$ $t\geq t_{2}.$ 
\item[(d)] There exist two finite real constants $c>0$ and $t_{2}>0$ such that, for every $t\geq t_{2},$ the assumption $F(\cdot;\cdot)\in P_{t}$ implies $F(\cdot +\tau;\cdot) \in P_{t}$ for all $\tau \in \Lambda'$ with $|\tau|\leq 1,$ and $\| F(\cdot +\tau;x)\|_{P_{t}} \leq c\|F(\cdot;x)\|_{P_{t}}$ for all $t\geq t_{2},$ $x\in X$ and $\tau \in \Lambda'$ with $|\tau|\leq 1.$
\end{itemize}
Then, any function $F\in e-({\mathcal B},\phi,{\mathbb F})-B^{{\mathcal P}_{\cdot}}(\Lambda \times X : Y)$ is Besicovitch-$({\mathcal P},\phi,{\mathbb F},{\mathcal B},\Lambda',{\rm I})$-continuous for any set $\Lambda' \subseteq \Lambda''. $
\item[(iii)] Suppose that $\phi(\cdot)$ is continuous at the point $t=0$, conditions \emph{(a)-(c)} of \emph{(ii)} hold and condition \emph{(d)} of \emph{(ii)} holds for every $\tau \in \Lambda'=\Lambda.$
Let
$F\in e-({\mathcal B},\phi,{\mathbb F})-B^{{\mathcal P}_{\cdot}}(\Lambda \times X : Y)$.
Then the following holds:
\begin{itemize}
\item[(a)]
The function $F(\cdot;\cdot)$ is Doss-$({\mathcal P},\phi,{\mathbb F},{\mathcal B},\Lambda,{\rm I})$-almost periodic, provided that $\Lambda+\Lambda \subseteq \Lambda$ and, for every points $( t_{1},..., t_{n} )\in  \Lambda$ and $( \tau_{1},..., \tau_{n} )\in  \Lambda,$ 
the points $( t_{1},t_{2}+\tau_{2},..., t_{n}+\tau_{n} ),$ $( t_{1},t_{2},t_{3}+\tau_{3},..., t_{n}+\tau_{n} ),...,$ $( t_{1},t_{2},..., t_{n-1}, t_{n}+\tau_{n} ),$ also belong to $ \Lambda .$
\item[(b)] The function $F(\cdot;\cdot)$ is Doss-$({\mathcal P},\phi,{\mathbb F},{\mathcal B},\Lambda \cap \Delta_{n},{\rm I})$-almost periodic, provided that 
$\Lambda \cap \Delta_{n}\neq \emptyset,$ $\Lambda+(\Lambda\cap \Delta_{n})\subseteq \Lambda$ and that, for every points $( t_{1},..., t_{n} )\in \Lambda$ and $( \tau,..., \tau )\in \Lambda\cap \Delta_{n},$ 
the points $( t_{1},t_{2}+\tau,..., t_{n}+\tau ),$ $( t_{1},t_{2},t_{3}+\tau,..., t_{n}+\tau ),... ,$ $( t_{1},t_{2},..., t_{n-1}, t_{n}+\tau ),$ also belong to $\Lambda\cap \Delta_{n}.$
\end{itemize}
\end{itemize}
\end{prop}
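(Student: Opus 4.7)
The plan is to adapt the proof of \cite[Proposition 2.4]{multi-besik}, replacing the $L^{p(\cdot)}$-estimates used there by the structural axioms (C0), (C2), (C3) of the general pseudometric spaces ${\mathcal P}_{t}=(P_{t},d_{t})$. In each of the three parts the engine is the same: given $B\in{\mathcal B}$, pick a trigonometric polynomial $P_{k}$ from the defining sequence for the class $e-({\mathcal B},\phi,{\mathbb F})-B^{{\mathcal P}_{\cdot}}(\Lambda \times X : Y)$ that approximates $F$ to prescribed tolerance (in the relevant $\limsup$-sense), decompose $F=P_{k}+(F-P_{k})$, and propagate pointwise inequalities to $\|\cdot\|_{P_{t}}$ via the sub-additivity $\phi(x+y)\leq c[\phi(x)+\phi(y)]$ together with the appropriate axiom.

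For \textbf{(i)}, since $B$ is bounded and the coefficient maps of $P_{k}$ are continuous, the quantity $C_{B,k}:=\sup_{{\bf t}\in\Lambda,\,x\in B}\|P_{k}({\bf t};x)\|_{Y}$ is finite, and sub-additivity of $\phi$ gives
\[
\phi\bigl(\|F({\bf t};x)\|_{Y}\bigr)\leq c\bigl[\phi\bigl(\|F({\bf t};x)-P_{k}({\bf t};x)\|_{Y}\bigr)+\phi(C_{B,k})\bigr].
\]
Applying (C2) transfers this to a bound on $\|{\mathbb F}(t)\phi(\|F(\cdot;x)\|_{Y})\|_{P_{t}}$; the first summand is the approximation error (made small by the choice of $k$), while the second summand is a constant, controlled by conditions (a)--(b), yielding a finite $\limsup$-bound $M_{B}$.

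For \textbf{(ii)}, fix $\epsilon>0$ and $B\in{\mathcal B}$, and choose $k$ so that the approximation error is smaller than $\epsilon/(3e)$, where $e$ is the constant from (C3). Since $P_{k}(\cdot;x)$ is uniformly continuous in ${\bf t}$ uniformly in $x\in B$, and $\phi$ is continuous at zero, there exists $\delta\in(0,1]$ such that $\phi(\|P_{k}({\bf t}+\tau;x)-P_{k}({\bf t};x)\|_{Y})<\epsilon'$ for all $|\tau|\leq\delta$, ${\bf t}\in\Lambda$, $x\in B$, with $\epsilon'$ arbitrarily small. Applying the three-term decomposition
\[
F(\cdot+\tau;x)-F(\cdot;x)=\bigl[F(\cdot+\tau;x)-P_{k}(\cdot+\tau;x)\bigr]+\bigl[P_{k}(\cdot+\tau;x)-P_{k}(\cdot;x)\bigr]+\bigl[P_{k}(\cdot;x)-F(\cdot;x)\bigr],
\]
then sub-additivity of $\phi$ and (C3) split the $P_{t}$-norm into three summands; the first is controlled by (d) (translation invariance for $|\tau|\leq 1$) applied to the approximation error, the third is the approximation error itself, and the middle summand is dominated by $\|{\mathbb F}(t)\phi(\epsilon')\|_{P_{t}}$, which shrinks as $\epsilon'\to 0+$ by (a) and (C0) in (b).

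For \textbf{(iii)}, every trigonometric polynomial on ${\mathbb R}^{n}$ is Bohr almost periodic, and since $B$ is bounded with continuous coefficient maps, the relatively dense set of $\epsilon$-almost periods is uniform in $x\in B$. In case \textbf{(a)}, the hypothesis $\Lambda+\Lambda\subseteq\Lambda$ together with the listed intermediate-point conditions allows one to work with $\tau\in\Lambda$ directly, and the three-term argument of (ii) closes the proof. In case \textbf{(b)} the translation $\tau\mathbf{1}\in\Lambda\cap\Delta_{n}$ is handled by telescoping a single diagonal translation into $n$ coordinate-wise translations through the intermediate points listed in the hypothesis, applying (d) and (C3) at each step. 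The main obstacle of the whole proposition is precisely this telescoping in \textbf{(iii)(b)}: although each coordinate-wise step is controlled by the one-dimensional uniform continuity of $P_{k}$ and the translation invariance (d), one has to verify that every intermediate point lies in $\Lambda$ so that $F$ is actually defined there, and then aggregate the $n$ telescoping errors by iterated (C3). Once this bookkeeping is executed, the estimate collapses to the same form as in (ii) and the conclusion follows.
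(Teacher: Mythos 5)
Your proposal follows exactly the route the paper intends: the authors give no written proof of Proposition \ref{anat-metrical}, stating only that it ``can be deduced following the lines of proof of [Proposition 2.4]'' of the cited work on multi-dimensional Besicovitch almost periodic functions, and your reconstruction --- approximating $F$ by a trigonometric polynomial $P_{k}$ from the defining sequence, using the three-term decomposition together with the sub-additivity and monotonicity of $\phi$, transferring the pointwise estimates to $\|\cdot\|_{P_{t}}$ via (C2)/(C3), invoking the translation condition (d) for the error term, and telescoping the diagonal translation through the listed intermediate points in (iii)(b) --- is precisely that argument adapted to the general pseudometric axioms. There is no discrepancy with the paper's (omitted) proof to report.
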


Suppose that ${\mathcal B}$ consists of bounded subsets of $X$, $F,\ G : \Lambda \times X \rightarrow Y$, the function $\phi(\cdot)$ is monotonically increasing, $\phi(0)=0$ and $\phi(x+y)\leq \phi(x)+\phi(y)$ for all $x,\ y\geq 0.$ Let conditions (a)-(b) given in the formulation of
Proposition \ref{anat-metrical}(i), and
let condition (c) holds with $d=1,$ 
for the both functions
$F$ and $G.$ Equipped with the usual operations, the 
set $e-({\mathcal B},\phi,{\mathbb F})-B^{{\mathcal P}_{\cdot}}(\Lambda \times X : Y)$ forms a vector space on account of condition (c). Therefore, the
functions $F(\cdot;\cdot)$ and $G(\cdot;\cdot)$ are Besicovitch-$({\mathcal P},\phi,{\mathbb F},{\mathcal B})$-bounded. Let a set $B\in {\mathcal B}$ be fixed. Then 
$$
d_{B}(F,G):=\limsup_{t\rightarrow +\infty}\sup_{x\in B}\Bigl\| {\mathbb F}(t;\cdot)\phi\Bigl( \bigl\| F({\cdot};x)-G({\cdot};x) \bigr\|_{Y} \Bigr)\Bigr\|_{P_{t}}
$$
defines a pseudometric on the set $
e-({\mathcal B},\phi,{\mathbb F})-B^{{\mathcal P}_{\cdot}}(\Lambda \times X : Y).$ Using the idea from the original proof of J. Marcinkiewicz (see also \cite[pp. 249--252]{188}), we have recently proved that the pseudometric space $(e-({\mathcal B},\phi,{\mathbb F})-B^{p(\cdot)}(\Lambda \times X : Y),d_{B})$ is complete (see \cite[Theorem 2.5]{multi-besik}). We will not consider here some sufficient conditions ensuring that the pseudometric space  
$
e-({\mathcal B},\phi,{\mathbb F})-B^{{\mathcal P}_{\cdot}}(\Lambda \times X : Y)$ is complete.

In \cite[Subsection 2.1]{doss-rn}, we have investigated the relationship between the Doss almost periodiicty and Weyl almost periodicity and proved especially that every Weyl-$p$-almost periodic function $f : {\mathbb R}\rightarrow Y$ is Doss-$p$-almost periodic ($1\leq p<+\infty$). The multi-dimensional analogue of this statement has been clarified in \cite[Proposition 8]{doss-rn} and we will only note that it could be interesting to formulate this result in the metrical framework.

We continue our work by emphasizing the following facts:
\begin{itemize}
\item[(BE1)] The reader can simply clarify some natural conditions under which the introduced spaces of functions are translation invariant. Suppose, for example, that $\tau \in \Lambda'',$ $x_{0}\in X$ and $F\in e-({\mathcal B},\phi,{\mathbb F})-B^{{\mathcal P}_{\cdot}}(\Lambda \times X : Y)$ [$e-({\mathcal B},\phi,{\mathbb F})-B^{{\mathcal P}_{\cdot}}_{\rho}(\Lambda \times X : Y);$ $e-({\mathcal B},\phi,{\mathbb F})_{j\in {\mathbb N}_{n}}-B^{{\mathcal P}_{\cdot}}_{\rho_{j}}(\Lambda \times X : Y)$]. Then we have $F(\cdot +\tau;\cdot+x_{0})\in e-({\mathcal B}_{x_{0}},\phi,{\mathbb F})-B^{{\mathcal P}_{\cdot}}(\Lambda \times X : Y)$ [$e-({\mathcal B},\phi,{\mathbb F})-B^{{\mathcal P}_{\cdot}}_{\rho}(\Lambda \times X : Y);$ $e-({\mathcal B},\phi,{\mathbb F})_{j\in {\mathbb N}_{n}}-B^{{\mathcal P}_{\cdot}}_{\rho_{j}}(\Lambda \times X : Y)$] with ${\mathcal B}_{x_{0}}\equiv \{-x_{0}+B : B\in {\mathcal B}\},$ provided that there exist
a finite real constant $c_{\tau}>0$ such that, for every $x\in B$ and $t>0,$ the assumption ${\mathbb F}(t+|\tau|,\cdot)\phi(\| H(\cdot,x)\|_{Y}) \in P_{t+|\tau|}$ implies
${\mathbb F}(t,\cdot)\phi(\| H(\cdot+\tau,x)\|_{Y}) \in P_{t}$ and 
$$
\Bigl\| {\mathbb F}(t,\cdot)\phi \bigl(\| H(\cdot+\tau,x)\|_{Y}\bigr) \Bigr\|_{P_{t}}\leq c_{\tau}\Bigl\|{\mathbb F}(t+|\tau|,\cdot)\phi\bigl(\| H(\cdot,x)\|_{Y}\bigr) \Bigr\|_{P_{t+|\tau|}}.
$$
\item[(BE2)] Suppose that the function $\phi(\cdot)$ is monotonically increasing, continuous at the point zero, there exists a finite real constant $c>0$ such that $\phi(x+y)\leq c[\phi(x)+\phi(y)]$ for all $x,\ y\geq 0,$ the mapping $t\mapsto\| {\mathbb F}(t,\cdot)\|_{P_{t}},$ $t>0$ is bounded at plus infinity, and for each $t>0$ we have that $P_{t}$ has a linear vector structure and contains all positive constants. Then $F\in e-({\mathcal B},\phi,{\mathbb F})-B^{{\mathcal P}_{\cdot}}(\Lambda \times X : Y)$ if and only if for each set $B\in {\mathcal B}$ there exists a sequence $(F_{k}(\cdot;\cdot))$ of strongly ${\mathcal B}$-almost periodic functions such that \eqref{jew} holds with the polynomial $P_{k}(\cdot;\cdot)$ replaced therein with the function $F_{k}(\cdot;\cdot).$ We can similarly consider the corresponding question for the classes  introduced in Definition \ref{strong-app}, Definition \ref{stepanov-approximation} and Definition \ref{weyl-approximation}.
\end{itemize}

The pointwise multiplication of multi-dimensional Besicovitch almost periodic type functions have recently been analyzed in \cite[Proposition 2.7, Proposition 2.14]{multi-besik}. Here we will clarify only one result concerning pointwise multiplication of functions obtained as metrical approximations by trigonometric polynomials or $\rho$-periodic type functions (the proof is very similar to the proof of the above-mentioned statement and therefore omitted):

\begin{prop}\label{approxb}
Suppose that the following conditions hold:
\begin{itemize}
\item[(i)]
There exists a finite real constant $c>0$ such that $\phi(x+y)\leq c[\phi(x)+\phi(y)]$ for all $x,\ y\geq 0,$ and there exists a function $\varphi : [0,\infty) \rightarrow [0,\infty)$ such that $\phi(xy)\leq \varphi(x)\phi(y)$ for all $x,\ y\geq 0.$
\item[(ii)] Condition \emph{(B)} holds with the pseudometric spaces ${\mathcal P}_{t}^{1}=(P_{t}^{1},d_{t}^{1})$ and ${\mathcal P}_{t}^{2}=(P_{t}^{2},d_{t}^{2})$ as well as the assumptions $f\in P_{t}^{1}$ and $g\in P_{t}^{2}$ imply $f\cdot g \in P_{t}$  ($t>0$) and the existence of a finite real constant $k>0$ such that 
\begin{align}\label{helder-aps}
\bigl\| f\cdot g \bigr\|_{P_{t}}\leq k\bigl\| f\bigl\|_{P_{t}^{1}}\cdot \bigl\| g\bigl\|_{P_{t}^{2}},\quad t>0,\ f\in P_{t}^{1},\ g\in P_{t}^{2}.
\end{align}
\item[(iii)] Condition \emph{(C0)} holds for $P_{t}$, provided that $t$ is sufficiently large.
\item[(iv)] For every set $B\in {\mathcal B},$ there exists a real number $t_{0}>0$ such that, for every $x\in B,$ the function ${\mathbb F}_{1}(t,\cdot)\varphi(|P(\cdot;x)|)$ 
belongs to $P_{t}^{1}$ for any $t>0$ and any trigonometric polynomial [$c$-periodic function; $(c_{j})_{j\in {\mathbb N}_{n}}$-periodic function],
the function ${\mathbb F}_{2}(t,\cdot)\varphi(\|G(\cdot;x)\|_{Y})$
belongs to $P_{t}^{2}$ for any $t>0,$ and
\begin{align*}
\limsup_{t\rightarrow +\infty}\Biggl[\sup_{x\in B}\Bigl\|{\mathbb F}_{1}(t,\cdot)\varphi(|P(\cdot;x)|)\Bigr\|_{P_{t}^{1}} +\sup_{x\in B}\Bigl\|{\mathbb F}_{2}(t,\cdot)\varphi \bigl(\|G(\cdot;x)\|_{Y}\bigr)\Bigr\|_{P_{t}^{2}}\Biggr]=0,
\end{align*}
for any trigonometric polynomial [$c$-periodic function; $(c_{j})_{j\in {\mathbb N}_{n}}$-periodic function] $P(\cdot;\cdot).$
\end{itemize}
If $F\in e-({\mathcal B},\phi,{\mathbb F}_{1})-B^{{\mathcal P}_{\cdot}^{1}}(\Lambda \times X : {\mathbb C})$ [$e-({\mathcal B},\phi,{\mathbb F}_{1})-B^{{\mathcal P}_{\cdot}^{1}}_{c_{1}{\rm I}}(\Lambda \times X : {\mathbb C});$ $e-({\mathcal B},\phi,{\mathbb F}_{1})_{j\in {\mathbb N}_{n}}-B^{{\mathcal P}_{\cdot}^{1}}_{c_{1}^{j}{\rm I}}(\Lambda \times X : {\mathbb C})$]
and
$G\in e-({\mathcal B},\phi,{\mathbb F}_{2})-B^{{\mathcal P}_{\cdot}^{2}}(\Lambda \times X : Y)$ [$e-({\mathcal B},\phi,{\mathbb F}_{2})-B^{{\mathcal P}_{\cdot}^{2}}_{c_{2}{\rm I}}(\Lambda \times X : Y);$ $e-({\mathcal B},\phi,{\mathbb F}_{2})_{j\in {\mathbb N}_{n}}-B^{{\mathcal P}_{\cdot}^{2}}_{c_{2}^{j}{\rm I}}(\Lambda \times X : Y)$], then $F\cdot G\in e-({\mathcal B},\phi,{\mathbb F})-B^{{\mathcal P}_{\cdot}}(\Lambda \times X : Y)$ [$e-({\mathcal B},\phi,{\mathbb F})-B^{{\mathcal P}_{\cdot}}_{c_{1}{\rm I}}(\Lambda \times X : Y);$ $e-({\mathcal B},\phi,{\mathbb F})_{j\in {\mathbb N}_{n}}-B^{{\mathcal P}_{\cdot}}_{c_{j}{\rm I}}(\Lambda \times X : Y)$] with $F=F_{1}F_{2};$ here $c_{1},\ c_{2}\in  {\mathbb C}\setminus \{0\}$ and $c_{1}^{j},\ c_{2}^{j}\in  {\mathbb C}\setminus \{0\}$ for all $j\in {\mathbb N}_{n}.$
\end{prop}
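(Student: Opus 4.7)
The plan is to adapt the standard decomposition used for pointwise multiplication in Besicovitch-type spaces, replacing the classical H\"older inequality by the abstract version \eqref{helder-aps}. Fix $B\in \mathcal{B}$ and invoke the Besicovitch memberships of $F$ and $G$ to pick approximating sequences $(P_{k}(\cdot;\cdot))$ and $(Q_{k}(\cdot;\cdot))$ of trigonometric polynomials (respectively $c_{1}\mathrm{I}$- and $c_{2}\mathrm{I}$-periodic, or $(c_{1}^{j}\mathrm{I})_{j\in \mathbb{N}_{n}}$- and $(c_{2}^{j}\mathrm{I})_{j\in \mathbb{N}_{n}}$-periodic functions). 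Since the pointwise product of two trigonometric polynomials is a trigonometric polynomial and, when the periods align, the product of a $c_{1}\mathrm{I}$- and a $c_{2}\mathrm{I}$-periodic function is $(c_{1}c_{2})\mathrm{I}$-periodic, the sequence $(P_{k}Q_{k})$ is a legitimate candidate of approximants for $F\cdot G$ in the targeted class.

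The second step is the algebraic splitting $F G-P_{k}Q_{k}=(F-P_{k})G+P_{k}(G-Q_{k})$. Applying monotonicity and subadditivity of $\phi$ (with the constant $c$ from (i)) and then the submultiplicativity $\phi(xy)\leq \varphi(x)\phi(y)$, I obtain the pointwise bound
\[
\phi\bigl(\|FG(\cdot;x)-P_{k}Q_{k}(\cdot;x)\|_{Y}\bigr)\leq c\,\varphi(\|G(\cdot;x)\|_{Y})\phi(|F(\cdot;x)-P_{k}(\cdot;x)|)+c\,\varphi(|P_{k}(\cdot;x)|)\phi(\|G(\cdot;x)-Q_{k}(\cdot;x)\|_{Y}).
\]
Multiplying by $\mathbb{F}(t,\cdot)=\mathbb{F}_{1}(t,\cdot)\mathbb{F}_{2}(t,\cdot)$, then using (iii) (i.e.\ (C0)) to pass from a pointwise inequality to an inequality in $\|\cdot\|_{P_{t}}$, and finally invoking \eqref{helder-aps} to split each of the two products across the factor spaces $P_{t}^{1}$ and $P_{t}^{2}$, I arrive at an estimate of the form
\[
\|\mathbb{F}(t,\cdot)\phi(\|FG-P_{k}Q_{k}\|_{Y})\|_{P_{t}}\leq ck\bigl[A_{k}(t,x)\,C(t,x)+D_{k}(t,x)\,B_{k}(t,x)\bigr],
\]
where $A_{k}=\|\mathbb{F}_{1}\phi(|F-P_{k}|)\|_{P_{t}^{1}}$, $B_{k}=\|\mathbb{F}_{2}\phi(\|G-Q_{k}\|_{Y})\|_{P_{t}^{2}}$, $C=\|\mathbb{F}_{2}\varphi(\|G\|_{Y})\|_{P_{t}^{2}}$, $D_{k}=\|\mathbb{F}_{1}\varphi(|P_{k}|)\|_{P_{t}^{1}}$.

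To conclude, take $\sup_{x\in B}$ and $\limsup_{t\to+\infty}$ for each fixed $k$. Condition (iv) gives $\limsup_{t\to+\infty}\sup_{x\in B}C(t,x)=0$ and $\limsup_{t\to+\infty}\sup_{x\in B}D_{k}(t,x)=0$, while the Besicovitch memberships of $F$ and $G$ guarantee that $\sup_{x\in B}A_{k}(t,x)$ and $\sup_{x\in B}B_{k}(t,x)$ remain bounded in $t$ for each $k$ (and in fact vanish as $k\to+\infty$ after a further $\limsup_{t\to+\infty}$). Hence $\limsup_{t\to+\infty}\sup_{x\in B}\|\mathbb{F}(t,\cdot)\phi(\|FG-P_{k}Q_{k}\|_{Y})\|_{P_{t}}=0$ for every $k$, which is actually stronger than what is required; letting $k\to+\infty$ completes the proof.

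The part I expect to be the main obstacle is not the analytic estimates, which are essentially bookkeeping, but the algebraic step of ensuring that $P_{k}Q_{k}$ belongs to the target class of approximating functions. For trigonometric polynomials this is trivial, but in the $c\mathrm{I}$- or $(c_{j}\mathrm{I})_{j\in \mathbb{N}_{n}}$-periodic cases one has to reconcile the (possibly incommensurable) periods of $P_{k}$ and $Q_{k}$, which may force a refinement of the approximating sequences or an a priori alignment of periods. A secondary, purely notational, difficulty is the precise identification of the target multiplier $c$ (which should morally be $c_{1}c_{2}$) and of the companion weight $\mathbb{F}$ as a genuine product $\mathbb{F}_{1}\mathbb{F}_{2}$, so that \eqref{helder-aps} can be invoked directly.
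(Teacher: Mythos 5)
Your decomposition $FG-P_{k}Q_{k}=(F-P_{k})G+P_{k}(G-Q_{k})$ followed by the abstract H\"older inequality \eqref{helder-aps} is exactly the argument the paper intends: the proof is omitted there with a pointer to \cite[Proposition 2.7, Proposition 2.14]{multi-besik}, whose proofs proceed in precisely this way, and your use of condition (iv) to annihilate the coefficients $C(t,x)$ and $D_{k}(t,x)$ (with $A_{k},B_{k}$ eventually bounded in $t$ for large $k$ by the Besicovitch memberships) is correct. The two caveats you flag --- reconciling the possibly incommensurable periods of $P_{k}$ and $Q_{k}$ in the bracketed cases, and the fact that the target relation should morally be $c_{1}c_{2}{\rm I}$ rather than $c_{1}{\rm I}$, with ${\mathbb F}={\mathbb F}_{1}{\mathbb F}_{2}$ rather than $F=F_{1}F_{2}$ --- are genuine imprecisions in the statement itself rather than gaps in your argument.
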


In connection with Proposition \ref{approxb}, let us notice that the existence and uniqueness of Besicovitch-$p$-almost periodic solutions for certain classes of PDEs on some proper subdomains of ${\mathbb R}^{n}$ have recently been analyzed in \cite{multi-besik}. The interested reader may try to provide some applications of Proposition \ref{approxb} in the analysis of the existence and uniqueness of metrical Besicovitch almost periodic solutions for certain classes of PDEs; see, e.g., the seventh application from \cite[Section 4]{multi-besik}, where the Besicovitch-$p$-almost periodicity of solutions is clarified for the equations depending on two variables, on the sectors of form $\Lambda=(-\infty,0] \times {\mathbb R}$ or $\Lambda=[0,\infty) \times {\mathbb R}.$ Let us also emphasize that the equation \eqref{helder-aps} can be viewed as the abstract H\"older inequality in pseudometric spaces.

In \cite[Section 3]{multi-besik}, we have revisited the important research studies \cite{doss}-\cite{doss1} by R. Doss. We close this section with the observation that we will not analyze here the metrical analogues of conditions (A), (AS), (A)$_{\infty}$ and (B) considered therein.

\section{Further results and applications}\label{maref}

Concerning the convolution invariance of function spaces introduced in this paper, we will only 
present here a few comments and a concrete application in the study of the existence and uniqueness of metrically Besicovitch almost periodic solutions of the heat equation in ${\mathbb R}^{n};$ the theoretical analysis is very similar to the analysis of the invariance under the actions of infinite convolution products carried out in the next subsection (see \cite{metrical,metrical-stepanov,metrical-weyl} for some results obtained recently in this direction). 

It is well known that 
a unique solution of the heat equation $u_{t}(t,x)=u_{xx}(t,x),$ $t\geq 0,\ x\in {\mathbb R}^{n};$ $u(0,x)=F(x),$ $x\in {\mathbb R}^{n}$ is given by the action of 
Gaussian semigroup 
\begin{align}\label{gauss}
F\mapsto (G(t)F)(x)\equiv \bigl(4\pi t\bigr)^{-n/2}\int_{{\mathbb R}^{n}}e^{-|y|^{2}/4t}F(x-y)\, dy,\quad t>0,\ x\in {\mathbb R}^{n}.
\end{align}
Suppose now that there exist two finite real numbers $b\geq 0$ and $c>0$ such that $|F(x)|\leq c(1+|x|)^{b},$ $x\in {\mathbb R}^{n}$ as well as that $a>0,$ $\alpha>0,$
$1\leq p<+\infty, $
$\alpha p\geq 1,$ and $1/(\alpha p)+1/q=1$. Suppose, further, that $\nu \in L^{p}_{loc}({\mathbb R}^{n})$, there exist finite real numbers
$M_{0}>0$ and $t_{0}>0$ such that 
\begin{align}\label{szszn}
\int_{[-t,t]^{n}}\nu^{p}(s)\, ds\leq M_{0}t^{ap},\quad t\geq t_{0},
\end{align} 
as well as there exists a function $\varphi: {\mathbb R}^{n} \rightarrow [0,\infty)$ such that $\nu(x)\leq \nu(x-y)\varphi(y)$ for all $x,\ y\in {\mathbb R}^{n}$ and
\begin{align*}
\int_{{\mathbb R}^{n}}e^{-c|y|^{2}}\bigl( 1+|y|^{ap} \bigr)\varphi^{p}(y)\, dy<+\infty,
\end{align*}
for any $c>0.$ Let for each $t>0$ we have $P_{t}:=L^{p}_{\nu}([-t,t]^{n}),$ and let $d_{t}$ be the metric induced by the norm of this Banach space.
Let us fix a real number $t_{0}$ in \eqref{gauss}, and let $F\in e-(x^{\alpha},t^{-a})-B^{{\mathcal P}_{\cdot}}({\mathbb R}^{n} : {\mathbb C}).$ Then the mapping $x\mapsto (G(t_{0})F)(x),$ $x\in {\mathbb R}^{n}$ is well-defined and has the same growth as $f(\cdot);$ let us prove that this mapping belongs to the class $e-(x^{\alpha},t^{-a})-B^{{\mathcal P}_{\cdot}}({\mathbb R}^{n} : {\mathbb C})$  as well. Let $\epsilon>0$ be given in advance. Set, as in \cite{multi-besik},  
$$
c_{t_{0}}:=\bigl(4\pi t_{0}\bigr)^{-n/2}\Bigl\| e^{-|\cdot|^{2}/8t_{0}}\Bigr\|_{L^{q}({\mathbb R}^{n})}^{\alpha p}.
$$
By our assumption, there exist a trigonometric polynomial $P(\cdot)$ and a finite real number $t_{1}>0$ such that
$$
\int_{[-t,t]^{n}}\bigl| F(x)-P(x) \bigr|^{\alpha p}\nu^{p}(x)\, dx<\epsilon_{0}t^{ap},\quad t\geq t_{1}.
$$
The function $x\mapsto (G(t_{0})P)(x),$ $x\in {\mathbb R}^{n}$ is Bohr almost periodic and the required conclusion simply follows from the next computation:
\begin{align}
\notag \frac{1}{t^{ap}}&\int_{[-t,t]^{n}}\bigl| (G(t_{0})F)(x)-(G(t_{0})P)(x) \bigr|^{\alpha p}\nu^{p}(x)\, dx
\\\label{dojaja}& \leq \frac{c_{t_{0}}}{t^{ap}}\int_{[-t,t]^{n}}\int_{{\mathbb R}^{n}} e^{-|y|^{2}\alpha p/8t_{0}}\bigl| F(x-y)-P(x-y) \bigr|^{\alpha p}\, dy \, \cdot \nu^{p}(x)\, dx
\\\notag & =\frac{c_{t_{0}}}{t^{ap}}\int_{{\mathbb R}^{n}}e^{-|y|^{2}\alpha p/8t_{0}}\int_{[-t,t]^{n}} \bigl| F(x-y)-P(x-y) \bigr|^{\alpha p}\nu^{p}(x)\, dx\, dy
\\\notag & \leq  \frac{c_{t_{0}}}{t^{ap}}\int_{{\mathbb R}^{n}}e^{-|y|^{2}\alpha p/8t_{0}}\int_{[-t+|y|,t+|y|]^{n}} \bigl| F(x)-P(x) \bigr|^{\alpha p}\nu^{p}(x)\, dx\, dy
\\\notag & \leq  \frac{c_{t_{0}}}{t^{ap}}\int_{{\mathbb R}^{n}}e^{-|y|^{2}\alpha p/8t_{0}}\Biggl[\int_{[-t+|y|,t+|y|]^{n}} \bigl| F(x)-P(x) \bigr|^{\alpha p}\nu^{p}(x-y)\, dx\Biggr]\varphi^{p}(y)\, dy
\\\notag & \leq \frac{c_{t_{0}}}{t^{ap}}\int_{{\mathbb R}^{n}}e^{-|y|^{2}\alpha p/8t_{0}}\epsilon_{0}2^{ap}\bigl(t^{ap}+|y|^{ap} \bigr)\varphi^{p}(y)\,\, dy,\quad t\geq t_{1}.
\end{align}

The argumentation employed for proving the estimate \eqref{dojaja} can be used to prove some results about the invariance of certain types of metrical Besicovitch almost periodicity under the actions of the usual convolution product 
\begin{align*}
f\mapsto F(x)\equiv \int_{{\mathbb R}^{n}}h(x-y)f(y)\, dy,\quad x\in {\mathbb R}^{n},
\end{align*}
provided that the function $h\in L^{1}({\mathbb R}^{n})$ has a certain growth order. For instance, an extension of \cite[Theorem 4.6]{multi-besik} can be proved in this context (the use of general binary relations $\rho$ is important, which can be seen from our recent applications to the Gaussain semigroup given in \cite{metrical}; we will not reconsider such applications here). 
Furthermore, using a similar idea as above, we can consider the existence and uniqueness of metrical Besicovitch almost periodic solutions for some special classes of evolution equations of first order; for instance, we can analyze the evolution systems in the space $Y:=L^{r}({\mathbb R}^{n}),$ where $r\in [1,\infty),$ generated by the family of operators $ A(t):= \Delta +a(t){\rm I}$, $t \geq 0$, where $\Delta$ is the Dirichlet Laplacian on $L^{r}(\mathbb{R}^{n})$  and $ a \in L^{\infty}([0,\infty)) .$ See the sixth application from \cite[Section 4]{multi-besik} for more details.

\subsection{Invariance under the actions of infinite convolution products}\label{seka}

In this subsection, we will present a few results concerning the invariance of introduced classes of functions under the actions of infinite convolution product
\begin{align}\label{trigpol}
t\mapsto F(t):=\int^{t}_{-\infty}R(t-s)f(s)\, ds,\quad t\in {\mathbb R};
\end{align}
for simplicity, we will not consider the multi-dimensional case here (cf. \cite{nova-selected} for more details). 

Our first result considers $(c,\Lambda')$-almost periodic functions in variation; we will use the following notion: Suppose that $\emptyset \neq \Lambda' \subseteq {\mathbb R}$ and $c\in {\mathbb C},$ $|c|=1.$ A continuous function $f: {\mathbb R} \rightarrow Y$ is called $(c,\Lambda')$-almost periodic in variation if and only if for each $\epsilon>0$ there exists a finite real number $l>0$ such that, for each $t_{0}\in \Lambda'$, there exists a number $\tau \in [t_{0}-l,t_{0}+l]$ such that
\begin{align}\label{kasnim}
\sup_{t\in {\mathbb R}} \Bigl( |f(t+\tau)-cf(t)|+V_{1}(f(\cdot +\tau)-cf(\cdot);t) \Bigr)<\epsilon.
\end{align}

We are ready to state the following simple result:

\begin{prop}\label{stojke}
Suppose that $(R(t))_{t>0}\subseteq L(X,Y)$ is a strongly continuous operator family and $\int^{\infty}_{0}\|R(t)\|\, dt<+\infty. $ If the function $f: {\mathbb R} \rightarrow X$ is $(c,\Lambda')$-almost periodic in variation ($\emptyset \neq \Lambda' \subseteq {\mathbb R};$ $c\in {\mathbb C},$ $|c|=1$), then the function $F(\cdot),$ given by \eqref{trigpol}, is likewise $(c,\Lambda')$-almost periodic in variation.
\end{prop}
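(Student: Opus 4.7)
The plan is to verify the defining inequality \eqref{kasnim} directly for $F$. First I would make the substitution $u=t-s$ in \eqref{trigpol}, so that $F(t)=\int_{0}^{\infty}R(u)f(t-u)\,du$ and, for every $\tau\in\mathbb{R}$,
\[
F(t+\tau)-cF(t)=\int_{0}^{\infty}R(u)\bigl[f(t+\tau-u)-cf(t-u)\bigr]\,du.
\]
Writing $H(s):=f(s+\tau)-cf(s)$, the integrand becomes $R(u)H(t-u)$, so the task reduces to controlling $|H|$ and $V_{1}(H;\cdot)$, which is precisely what the hypothesis on $f$ delivers.

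The sup-norm part is immediate: $|F(t+\tau)-cF(t)|\le M\cdot\sup_{s\in\mathbb{R}}|H(s)|$, where $M:=\int_{0}^{\infty}\|R(u)\|\,du<\infty$. For the $1$-variation part, I fix $t\in\mathbb{R}$ and an arbitrary partition $\Phi=\{u_{0}<\cdots<u_{s}\}$ of $[t-1,t+1]$, apply the triangle inequality inside the integral, and swap the finite sum with the integral (valid by Tonelli, everything being nonnegative) to reach
\[
\sum_{i=0}^{s-1}\bigl|[F(u_{i+1}+\tau)-cF(u_{i+1})]-[F(u_{i}+\tau)-cF(u_{i})]\bigr|\le\int_{0}^{\infty}\|R(u)\|\sum_{i=0}^{s-1}\bigl|H(u_{i+1}-u)-H(u_{i}-u)\bigr|\,du.
\]
For each fixed $u>0$, the change of indices $v_{i}:=u_{i}-u$ is a bijection between partitions of $[t-1,t+1]$ and partitions of $[t-1-u,t+1-u]$, so the inner sum is bounded by $V_{1}(H;t-u)\le \sup_{s\in\mathbb{R}}V_{1}(H;s)$. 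Plugging this uniform bound into the integral and taking the supremum over $\Phi$ on the left gives $V_{1}(F(\cdot+\tau)-cF(\cdot);t)\le M\cdot \sup_{s\in\mathbb{R}}V_{1}(H;s)$, uniformly in $t\in\mathbb{R}$.

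Combining both estimates, for a given $\epsilon>0$ I invoke the $(c,\Lambda')$-almost periodicity in variation of $f$ with tolerance $\epsilon/(1+M)$ to produce $l>0$ such that for each $t_{0}\in\Lambda'$ some $\tau\in[t_{0}-l,t_{0}+l]$ realizes $\sup_{t}\bigl(|H(t)|+V_{1}(H;t)\bigr)<\epsilon/(1+M)$; the same $\tau$ then witnesses \eqref{kasnim} for $F$ at tolerance $\epsilon$. The main (quite mild) obstacle is the translation identity $V_{1}(H(\cdot-u);t)=V_{1}(H;t-u)$ used implicitly when moving variation through the convolution; this follows from the partition bijection above but deserves to be spelled out. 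A secondary point is measurability in $u$ of the integrand appearing after the Tonelli step, which is handled either by continuity of $H$ (hence of the finite sums in $u$) or by the cruder uniform bound $\sup_{s}V_{1}(H;s)<\infty$, which makes the integral trivially convergent.
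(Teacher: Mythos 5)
Your argument is correct and follows essentially the same route as the paper's proof, which simply asserts the key estimate bounding the $F$-quantity by $\int^{\infty}_{0}\|R(s)\|\,ds$ times the corresponding $f$-quantity as ``very elementary'' (note that the displayed inequality in the paper has its two sides transposed, evidently a typo); you supply the details the paper omits. One bookkeeping remark: bounding $\sup_{t}\|F(t+\tau)-cF(t)\|_{Y}$ and $\sup_{t}V_{1}(F(\cdot+\tau)-cF(\cdot);t)$ separately by $M\sup_{s}\|H(s)\|$ and $M\sup_{s}V_{1}(H;s)$ and then adding yields a factor $2M$, so either take tolerance $\epsilon/(2M+1)$ or, more cleanly, add the two pointwise bounds inside the single integral $\int_{0}^{\infty}\|R(u)\|\bigl(\|H(t-u)\|+V_{1}(H;t-u)\bigr)\,du$ before taking suprema, which recovers your $\epsilon/(1+M)$.
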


\begin{proof}
Let $\epsilon>0$ be given. Then there exists a finite real number $l>0$ such that, for each $t_{0}\in \Lambda'$, there exists a number $\tau \in [t_{0}-l,t_{0}+l]$ such that \eqref{kasnim} holds (cf. also \cite[Theorem 6.1.53]{nova-selected}). We need to show that \eqref{kasnim} holds with the function $f(\cdot)$ replaced by the function $F(\cdot)$ therein. But, keeping in mind the elementary definitions, it is very elementary to prove that
\begin{align*}
\sup_{t\in {\mathbb R}}& \Bigl( \|f(t+\tau)-cf(t)\|+V_{1}(f(\cdot +\tau)-cf(\cdot);t) \Bigr) 
\\& \leq \sup_{t\in {\mathbb R}} \Bigl( \|F(t+\tau)-cF(t)\|_{Y}+V_{1}(F(\cdot +\tau)-cF(\cdot);t) \Bigr) \cdot \int^{\infty}_{0}\|R(s)\|\, ds.
\end{align*}
This implies the required result.
\end{proof}

We continue by stating the following analogue of \cite[Theorem 3.13]{metrical}:

\begin{prop}\label{szszc}
Suppose that $(R(s))_{s> 0}\subseteq L(X,Y)$ is a strongly continuous operator family, $c\in {\mathbb C}\setminus \{0\}$ and $\emptyset \neq \Lambda'\subseteq {\mathbb R}.$
Let $P:=C_{b,\nu}({\mathbb R} : [0,\infty))$ and $d(f,g):=\| f-g\|_{C_{b,\nu}({\mathbb R} :[0,\infty))}$ for all $f,\ g\in P,$ and let there exist a function $\varphi : {\mathbb R} \rightarrow [0,\infty)$ continuous at the point zero and satisfying that $\phi(xy)\leq \phi(x)\varphi(y)$
for all $x,\ y\in {\mathbb R}.$ 
Suppose, further, that
$\int_{(0,\infty)}\|R(s )\|\, ds<\infty $ and $\int_{(0,\infty)}\frac{\|R(s )\|}{\nu(x-s)}\, ds<\infty$
for all $x\in {\mathbb R}.$
If $f : {\mathbb R} \rightarrow X$ is a bounded, continuous and Bohr $(\phi,{\mathbb F},\Lambda',c,{\mathcal P})$-almost periodic function, then the function $F: {\mathbb R} \rightarrow Y,$ given by \eqref{trigpol}, is
bounded, continuous and Bohr $(\phi,{\mathbb F}_{1},\Lambda',c,{\mathcal P}_{1})$-almost periodic, provided that $P_{1}$ is a Banach space,
\begin{align}\label{lozolozo}
{\mathbb F}_{1}(\cdot)\phi\Biggl(  \int^{\infty}_{0} \| R(s)\|  \frac{ds}{\nu(\cdot-s)}\Biggr)\in P_{1}
\end{align}
and condition \emph{(C0)} holds for $P_{1}.$
\end{prop}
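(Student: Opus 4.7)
The plan is to transfer the Bohr almost periodicity from $f$ to the infinite convolution product $F$ by direct pointwise estimation of the near-period difference $F(\cdot+\tau)-cF(\cdot)$. I would first verify that $F$ is well-defined, bounded, and continuous on ${\mathbb R}$: boundedness follows from $\|F(t)\|_Y \le \|f\|_{\infty}\int_{0}^{\infty}\|R(s)\|\,ds<\infty$, and continuity follows from strong continuity of $R(\cdot)$ together with dominated convergence, using $\|R(s)\|\,\|f\|_{\infty}$ as an integrable majorant.

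Fix $\epsilon>0$. Bohr $(\phi,{\mathbb F},\Lambda',c,{\mathcal P})$-almost periodicity of $f$, with ${\mathcal P}=(C_{b,\nu}({\mathbb R}:[0,\infty)),d)$ carrying the weighted sup-norm, produces $l>0$ so that for every $t_{0}\in \Lambda'$ there is $\tau\in B(t_{0},l)\cap \Lambda'$ with the \emph{pointwise} bound
\[
{\mathbb F}(u)\,\phi\bigl(\|f(u+\tau)-cf(u)\|_{X}\bigr)\,\nu(u)\le \epsilon,\qquad u\in {\mathbb R}.
\]
A change of variable $s\mapsto t+\tau-s$ in $F(t+\tau)$ rewrites the near-period difference as a convolution
\[
F(t+\tau)-cF(t)=\int_{0}^{\infty}R(s)\bigl[f(t+\tau-s)-cf(t-s)\bigr]\,ds,
\]
whence $\|F(t+\tau)-cF(t)\|_{Y}\le \int_{0}^{\infty}\|R(s)\|\cdot \|f(t+\tau-s)-cf(t-s)\|_{X}\,ds$.

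The technical core is to combine the two displayed estimates under the nonlinear $\phi$. Using monotonicity of $\phi$ to invert the pointwise Bohr bound, together with the submultiplicativity $\phi(xy)\le \phi(x)\varphi(y)$ applied with $x=\|f(u+\tau)-cf(u)\|_{X}$ and $y=\nu(u)$, one linearises to the form $\|f(u+\tau)-cf(u)\|_{X}\le \delta(\epsilon)/\nu(u)$ with $\delta(\epsilon)\to 0$ (continuity of $\varphi$ at $0$ is used here to absorb the factor $\varphi(\nu(u))/\nu(u)$ uniformly). Substituting into the convolution estimate and applying $\phi(xy)\le \phi(x)\varphi(y)$ once more, this time with $x=\int_{0}^{\infty}\|R(s)\|\,ds/\nu(t-s)$ and $y=\delta(\epsilon)$, yields
\[
\phi\bigl(\|F(t+\tau)-cF(t)\|_{Y}\bigr)\le \eta(\epsilon)\cdot \phi\!\left(\int_{0}^{\infty}\|R(s)\|\,\frac{ds}{\nu(t-s)}\right),
\]
with $\eta(\epsilon)=\varphi(\delta(\epsilon))\to 0$. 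Multiplying by ${\mathbb F}_{1}(t)$, passing to the $P_{1}$-norm, and invoking condition (C0) for $P_{1}$ gives
\[
\bigl\|{\mathbb F}_{1}(\cdot)\,\phi\bigl(\|F(\cdot+\tau)-cF(\cdot)\|_{Y}\bigr)\bigr\|_{P_{1}}\le e\,\eta(\epsilon)\,\Bigl\|{\mathbb F}_{1}(\cdot)\,\phi\Bigl(\textstyle\int_{0}^{\infty}\|R(s)\|\,ds/\nu(\cdot-s)\Bigr)\Bigr\|_{P_{1}},
\]
where the final factor is finite by hypothesis \eqref{lozolozo}; choosing $\epsilon$ small yields the Bohr $(\phi,{\mathbb F}_{1},\Lambda',c,{\mathcal P}_{1})$-almost periodicity of $F$.

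The main obstacle is the intermediate factorisation in the previous display. Because $\phi$ is neither convex nor concave, one cannot simply push $\phi$ across the integral by Jensen or H\"older, and extracting a vanishing prefactor $\eta(\epsilon)$ requires the two-stage use of $\phi(xy)\le \phi(x)\varphi(y)$: first to linearise the pointwise Bohr bound (absorbing ${\mathbb F}(u)$ and $\varphi(\nu(u))/\nu(u)$) and then to pull the small factor back out of $\phi$ after integration. The completeness of $P_{1}$ and condition (C0) enter only at the final bookkeeping step.
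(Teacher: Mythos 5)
Your argument is essentially the paper's own proof: boundedness and continuity of $F$ from $\int_{0}^{\infty}\|R(s)\|\,ds<\infty$, the convolution representation of $F(\cdot+\tau)-cF(\cdot)$, the pointwise bound $\|f(u+\tau)-cf(u)\|\le \epsilon/\nu(u)$ extracted from the Bohr property of $f$ in $C_{b,\nu}$, the factor $\varphi(\epsilon)$ pulled out of $\phi$ via $\phi(xy)\le\phi(x)\varphi(y)$, and the conclusion via \eqref{lozolozo} and condition (C0). The only difference is that you make explicit the inversion of the $\phi$-wrapped weighted sup bound into the raw pointwise estimate (your $\delta(\epsilon)$), a step the paper performs silently; your justification of it is at the same (heuristic) level of rigor as the paper's, so the two proofs coincide in substance.
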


\begin{proof}
Since $\int_{0}^{\infty}\|R(s )\|\, ds<\infty $, a very simple argumentation from our previous research studies shows that the function $F(\cdot)$ is bounded and continuous. Let us show that $F(\cdot)$ is Bohr $(\phi,{\mathbb F}_{1},\Lambda',c,{\mathcal P}_{1})$-almost periodic. Let $\epsilon>0$ be arbitrary and let $\tau\in \Lambda'$ be a corresponding $\epsilon$-period of the function $f(\cdot).$ Then the final conclusion simply follows from the next computation involving the conditions $\int_{0}^{\infty}\frac{\|R(s )\|}{\nu(x-s)}\, ds<\infty$
for all $x\in {\mathbb R}$, \eqref{lozolozo} and the continuity of function $\varphi(\cdot)$ at the point zero: 
\begin{align*}
\Biggl\|& {\mathbb F}_{1}(\cdot)\phi\Biggl( \Bigl\| \int^{\infty}_{0}R(s)[f(\cdot +\tau-s)-cf(\cdot-s)]\, ds\Bigr\|_{Y} \Biggr)\Biggr\|_{P_{1}}
\\\leq & \Biggl\| {\mathbb F}_{1}(\cdot)\phi\Biggl(  \int^{\infty}_{0} \| R(s)\| \cdot \|f(\cdot +\tau-s)-cf(\cdot-s)\|\, ds \Biggr)\Biggr\|_{P_{1}}
\\\leq &\Biggl\| {\mathbb F}_{1}(\cdot)\phi\Biggl(  \int^{\infty}_{0} \| R(s)\| \cdot \frac{\epsilon}{\nu(\cdot-s)}\, ds\Biggr)\Biggr\|_{P_{1}}
\\\leq & \Biggl\| {\mathbb F}_{1}(\cdot)\varphi(\epsilon)\phi\Biggl(  \int^{\infty}_{0} \| R(s)\| \cdot \frac{ds}{\nu(\cdot-s)} \Biggr)\Biggr\|_{P_{1}}.
\end{align*}
\end{proof}

Now we will assume that
the operator family $(R(t))_{t>0}\subseteq L(X,Y)$ satisfies that there exist finite real constants $M>0$, $\beta \in (0,1]$ and $\gamma >1$ such that
\begin{align}\label{rad}
\bigl\| R(t)\bigr\|_{L(X,Y)}\leq M\frac{t^{\beta-1}}{1+t^{\gamma}},\quad t>0.
\end{align}

The following result is a slight extension of \cite[Proposition 4.1]{multi-besik}:

\begin{prop}\label{stanb}
Suppose that the operator family $(R(t))_{t>0}\subseteq L(X,Y)$ satisfies \eqref{rad}, as well as that  $a>0,$ $\alpha>0,$ $1\leq p<+\infty,$ $\alpha p \geq 1,$ $ap\geq 1,$
$\alpha p(\beta-1)/(\alpha p-1)>-1$ if $\alpha p>1$, and $\beta=1$ if $\alpha p=1.$ Suppose, further, that the function $f : {\mathbb R} \rightarrow X$ is Stepanov-$(\alpha p)$-bounded, i.e.,
$$
\bigl\|f\bigr\|_{S^{p}}:=\sup_{t\in {\mathbb R}}\int^{t+1}_{t}\bigl\|f(s)\bigr\|^{\alpha p}\, ds<+\infty,
$$
as well as that the function $\nu : {\mathbb R} \rightarrow (0,\infty)$ is monotonically decreasing, Stepanov-$p$-bounded and satisfies that the function $f(\cdot)\nu^{1/\alpha}(\cdot)$ is Stepanov-$(\alpha p)$-bounded as well as that there exist finite real numbers
$M_{0}>0$ and $t_{0}>0$ such that 
\eqref{szszn} holds with $n=1.$
Let for each $t>0$ we have $P_{t}:=L^{p}_{\nu}([-t,t]),$ and let $d_{t}$ be the metric induced by the norm of this Banach space.
If $f\in e-(x^{\alpha},t^{-a})-B^{{\mathcal P}_{\cdot}}({\mathbb R} : X),$
then the function $F(\cdot)$, given by \eqref{trigpol}, is bounded, continuous and belongs to the class $ e-(x^{\alpha},t^{-a})-B^{{\mathcal P}_{\cdot}}({\mathbb R} : Y).$
\end{prop}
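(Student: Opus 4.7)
The argument I propose follows the strategy employed in \cite[Proposition 4.1]{multi-besik}. First, I would verify that $F(\cdot)$ is well defined, bounded and continuous. The hypothesis $\alpha p(\beta-1)/(\alpha p-1) > -1$ (respectively, $\beta=1$ if $\alpha p=1$), together with $\gamma>1$, guarantees that $\|R(\cdot)\|\in L^{q}(0,\infty)$ for the H\"older conjugate $q$ of $\alpha p$, so in particular $\int_{0}^{\infty}\|R(s)\|\, ds<\infty$. Splitting $F(t)=\sum_{k=0}^{\infty}\int_{k}^{k+1}R(s)f(t-s)\, ds$ and applying H\"older on each unit interval (with exponents $q$ and $\alpha p$), combined with the Stepanov-$(\alpha p)$-boundedness of $f$, yields a uniform bound on $\|F(t)\|_{Y}$; continuity then follows from a routine dominated convergence argument.

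Next, given $\epsilon>0$, the hypothesis $f\in e-(x^{\alpha},t^{-a})-B^{{\mathcal P}_{\cdot}}({\mathbb R}:X)$ produces a trigonometric polynomial $P(t)=\sum_{j=1}^{N}c_{j}e^{i\lambda_{j}t}$ satisfying
$$\limsup_{T\rightarrow +\infty}T^{-a}\left(\int_{-T}^{T}\|f(u)-P(u)\|^{\alpha p}\nu^{p}(u)\, du\right)^{1/p}<\epsilon.$$
Defining $G(t):=\int_{0}^{\infty}R(s)P(t-s)\, ds$ and using $R(\cdot)\in L^{1}(0,\infty;L(X,Y))$, one obtains the closed form
$$G(t)=\sum_{j=1}^{N}e^{i\lambda_{j}t}\left[\int_{0}^{\infty}e^{-i\lambda_{j}s}R(s)\, ds\right]c_{j},\quad t\in{\mathbb R},$$
which is a trigonometric polynomial ${\mathbb R}\rightarrow Y$. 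It therefore suffices to estimate $F-G$ in the Besicovitch seminorm induced by ${\mathcal P}_{t}=L^{p}_{\nu}([-t,t])$, $\phi(x)=x^{\alpha}$ and ${\mathbb F}(t)=t^{-a}$.

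For this estimate, I would apply H\"older's inequality to $F(x)-G(x)=\int_{0}^{\infty}R(s)[f(x-s)-P(x-s)]\, ds$ via the splitting $\|R(s)\|=\|R(s)\|^{(\alpha p-1)/(\alpha p)}\cdot\|R(s)\|^{1/(\alpha p)}$, obtaining
$$\|F(x)-G(x)\|_{Y}^{\alpha p}\leq\left(\int_{0}^{\infty}\|R(s)\|\, ds\right)^{\alpha p-1}\int_{0}^{\infty}\|R(s)\|\,\|f(x-s)-P(x-s)\|^{\alpha p}\, ds.$$
Integrating against $\nu^{p}(x)$ on $[-t,t]$, swapping the order of integration by Fubini, substituting $u=x-s$ and exploiting the monotonicity of $\nu$ (so that $\nu(u+s)\leq\nu(u)$ for $s\geq 0$) reduces the task to controlling
$$\int_{0}^{\infty}\|R(s)\|\int_{-(t+s)}^{t+s}\|f(u)-P(u)\|^{\alpha p}\nu^{p}(u)\, du\, ds.$$
The Besicovitch approximation then bounds the inner integral by $\epsilon^{p}(t+s)^{ap}$ once $t+s$ exceeds a fixed threshold, while the remaining bounded contribution is absorbed into a constant using \eqref{szszn}. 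The main obstacle, and the delicate technical step in the proof, is controlling the resulting expression
$$t^{-ap}\int_{0}^{\infty}\|R(s)\|(t+s)^{ap}\, ds$$
as $t\rightarrow +\infty$: this requires a careful splitting of the $s$-integral at a threshold depending on $t$, using $\int_{0}^{\infty}\|R(s)\|\, ds<\infty$ on the near part and the polynomial tail decay $\|R(s)\|\lesssim s^{\beta-1-\gamma}$ coming from $\gamma>1$ on the far part, where the Stepanov-$(\alpha p)$-boundedness of $f(\cdot)\nu^{1/\alpha}(\cdot)$ takes over from the Besicovitch approximation. Modulo this bookkeeping, which parallels \cite[Proposition 4.1]{multi-besik}, letting $\epsilon\rightarrow 0+$ completes the proof.
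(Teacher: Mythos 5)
Your overall strategy coincides with the paper's: approximate $f$ by trigonometric polynomials in the Besicovitch seminorm, convolve with $R(\cdot)$, and estimate the error via H\"older's inequality, Fubini's theorem and the monotonicity of $\nu$. Two points of genuine divergence are worth recording. First, your observation that $G=\int_{0}^{\infty}R(s)P(\cdot-s)\,ds$ is itself a trigonometric polynomial, with coefficients $[\int_{0}^{\infty}e^{-i\lambda_{j}s}R(s)\,ds]c_{j}\in Y$, is a clean simplification: the paper only records that $F_{k}=\int_{-\infty}^{\cdot}R(\cdot-s)P_{k}(s)\,ds$ is Bohr almost periodic and must then invoke the equivalence (BE2) (this is where \eqref{szszn} enters) to return to the defining class. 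Second, your H\"older splitting $\|R\|=\|R\|^{1-1/(\alpha p)}\cdot\|R\|^{1/(\alpha p)}$ differs from the paper's weighted splitting $\|R(-z)\|=\frac{|z|^{\beta-1}(1+|z|)^{\zeta}}{1+|z|^{\gamma}}\cdot(1+|z|)^{-\zeta}$ with $\zeta\in(1/(\alpha p),1/(\alpha p)+\gamma-\beta)$, which keeps the weight $\nu^{1/\alpha}$ attached to $f-P$ inside the $L^{\alpha p}$ factor and produces the integrable kernel $(1+|z|^{\alpha\zeta})^{-p}$; the condition $\alpha p(\beta-1)/(\alpha p-1)>-1$ is what makes the first factor lie in $L^{\alpha p/(\alpha p-1)}$, exactly as you use it for the boundedness of $F$.

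The gap is in the step you yourself flag as delicate. After Fubini and the substitution $u=x-s$, the inner integral runs over $[-t-s,t-s]$, an interval of length $2t$; by enlarging it to $[-(t+s),t+s]$ you arrive at $t^{-ap}\int_{0}^{\infty}\|R(s)\|(t+s)^{ap}\,ds$, and this quantity is not bounded as $t\rightarrow+\infty$ under the stated hypotheses: boundedness would require $\int_{0}^{\infty}\|R(s)\|s^{ap}\,ds<\infty$, i.e.\ $ap<\gamma-\beta$, which is not assumed (only $\gamma>1$ and $ap\geq 1$ are). Your proposed repair --- switching to the Stepanov bound on the far range $s>S$ --- still fails with the enlarged window, because there the bound becomes $C(1+t+s)$ and $\int_{S}^{\infty}\|R(s)\|\,s\,ds$ diverges whenever $\gamma\leq 1+\beta$. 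The correct repair is to refrain from enlarging the window on the far range: $\int_{-t-s}^{t-s}\|f(u)-P(u)\|^{\alpha p}\nu^{p}(u)\,du\leq C(1+2t)$ uniformly in $s$, by the Stepanov-$(\alpha p)$-boundedness of $f\nu^{1/\alpha}$ and the Stepanov-$p$-boundedness of $\nu$ (which controls $P\nu^{1/\alpha}$), so the far contribution is at most $C't^{1-ap}\int_{S}^{\infty}\|R(s)\|\,ds$, small for $S$ large since $ap\geq 1$ and $\|R(\cdot)\|\in L^{1}(0,\infty)$; the near range $s\leq S$ is then handled by the Besicovitch estimate, giving a contribution of order $\epsilon^{p}t^{-ap}(t+S)^{ap}\int_{0}^{\infty}\|R(s)\|\,ds\rightarrow\epsilon^{p}\int_{0}^{\infty}\|R(s)\|\,ds$. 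With this correction your argument closes and is, if anything, more self-contained than the paper's, which defers the analogous computation to \cite[Proposition 4.1]{multi-besik}.
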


\begin{proof}
The proof of \cite[Proposition 2.6.11]{nova-mono} indicates that the function $F(\cdot)$ is well-defined, bounded and continuous. Let $(P_{k})$ be a sequence of trigonometric polynomials such that
\begin{align*}
\lim_{k\rightarrow +\infty}\limsup_{t\rightarrow +\infty}\frac{1}{2t^{ap}}\int^{t}_{-t}\bigl\| f(s)-P_{k}(s)\bigr\|^{\alpha p}\nu^{p}(s)\, ds=0.
\end{align*}
The function $t\mapsto F_{k}(t)\equiv \int^{t}_{-\infty}R(t-s)P_{k}(s)\, ds,$ $t\in {\mathbb R}$ is almost periodic due to the above-mentioned proposition. Since there exist finite real numbers
$M>0$ and $t_{0}>0$ such that \eqref{szszn} is true with $n=1$, the conclusions established in (BE2) hold. Hence,
we need to prove that
\begin{align}\label{dejanb}
\lim_{k\rightarrow +\infty}\limsup_{t\rightarrow +\infty}\frac{1}{2t^{ap}}\int^{t}_{-t}\bigl\| F(s)-F_{k}(s)\bigr\|^{\alpha p}\nu^{p}(s)\, ds=0.
\end{align}
In the remainder of the proof, we will only consider case $\alpha p>1.$ Suppose that $\zeta \in (1/(\alpha p),(1/(\alpha p))+\gamma-\beta).$ Then the function
$s\mapsto |s|^{\beta-1}(1+|s|)^{\zeta}/(1+|s|^{\gamma}),$
$s\in {\mathbb R}$ belongs to the space $L^{\alpha p/(\alpha p-1)}((-\infty,0));$ further on, since we have assumed that 
the function $\nu(\cdot)$ is Stepanov-$p$-bounded and the functions $f(\cdot),$ $f(\cdot)\nu^{1/\alpha}(\cdot)$ are Stepanov-$(\alpha p)$-bounded, 
the argumentation contained in the proof of \cite[Theorem 2.11.4]{nova-mono} shows that the function $s\mapsto (1+|s|)^{-\zeta}\| P_{k}(s+z)-f(s+z)\|\nu^{1/\alpha}(s+z),$
$s\in {\mathbb R}$ belongs to the space $L^{\alpha p}((-\infty,0))$ for all $k\in {\mathbb N}$ and $z\in {\mathbb R}.$ We have ($M_{1}>0$ is a finite real constant, $t>0$):{\small
\begin{align*}
&\frac{1}{2t^{ap}}\int^{t}_{-t}\bigl\| F(s)-F_{k}(s)\bigr\|^{\alpha p}\nu^{p}(s)\, ds  
\\& \leq \frac{1}{2t^{ap}}\int^{t}_{-t}\Biggl|\int^{0}_{-\infty}\|R(-z)\| \cdot \bigl\| P_{k}(s+z)-f(s+z)\bigr\| \, dz \Biggr|^{\alpha p}\nu^{p}(s)\, ds
\\& \leq \frac{M}{2t^{ap}}\int^{t}_{-t}\Biggl|\int^{0}_{-\infty}
\frac{|z|^{\beta-1}(1+|z|)^{\zeta}}{(1+|z|^{\gamma})}\cdot (1+|z|)^{ -\zeta}\bigl\| P_{k}(s+z)-f(s+z)\bigr\| \nu^{1/\alpha}(s+z)\, dz \Biggr|^{\alpha p}\, ds
\\& \leq \frac{M_{1}}{2t^{ap}}\int^{t}_{-t}\int^{0}_{-\infty}\frac{1}{(1+|z|^{\alpha \zeta})^{p}}\bigl\| P_{k}(s+z)-f(s+z)\bigr\|^{\alpha p}\nu^{p}(s+z)\, dz \, ds.
\end{align*}}
The estimate \eqref{dejanb} then follows from the remainder of the long computation carried out in the proof of \cite[Proposition 4.1]{multi-besik}.
\end{proof}

In the following slight extension of \cite[Proposition 4.2]{multi-besik}, the inhomogeneity $f(\cdot)$ is not necessarily Stepanov-$(\alpha p)$-bounded and the weight $\nu(\cdot)$ is not necessarily Stepanov-$p$-bounded. The proof is almost the same as the proof of Proposition \ref{stanb} and the above-mentioned results from \cite{multi-besik}:

\begin{prop}\label{stan1b}
Suppose that the operator family $(R(t))_{t>0}\subseteq L(X,Y)$ satisfies \eqref{rad}, as well as that $a>0,$ $\alpha>0,$ $1\leq p<+\infty,$ $\alpha p \geq 1,$ $ap\geq 1,$
$\alpha p(\beta-1)/(\alpha p-1)>-1$ if $\alpha p>1$, and $\beta=1$ if $\alpha p=1.$ Suppose, further, that
there exists a finite real constant $M>0$ such that $\|f(t)\|\leq M(1+|t|)^{b},$ $t\in {\mathbb R}$ for some real constant $b \in [0,\gamma-\beta)$ and a Lebesgue measurable function $f(\cdot),$
the function $\nu : {\mathbb R} \rightarrow (0,\infty)$ is monotonically decreasing, 
there exists a finite real constant $M'>0$ such that $\|\nu(t)\|\leq M'(1+|t|)^{b/\alpha},$ $t\in {\mathbb R}$ and there exist finite real numbers
$M_{0}>0$ and $t_{0}>0$ such that \eqref{szszn} holds with $n=1.$ 
Let for each $t>0$ we have $P_{t}:=L^{p}_{\nu}([-t,t]),$ and let $d_{t}$ be the metric induced by the norm of this Banach space.
If $f\in e-(x^{\alpha},t^{-a})-B^{{\mathcal P}_{\cdot}}({\mathbb R} : X),$ then the function $F(\cdot)$, given by \eqref{trigpol}, is continuous, belongs to the class $e-(x^{\alpha},t^{-a})-B^{{\mathcal P}_{\cdot}}({\mathbb R} : Y),$ and there exists a finite real constant $M'>0$ such that $\|F(t)\|_{Y}\leq M'(1+|t|)^{b},$ $t\in {\mathbb R}.$
\end{prop}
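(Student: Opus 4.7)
The plan is to imitate the strategy of Proposition \ref{stanb} and \cite[Proposition 4.1]{multi-besik}, adjusting the preliminary estimates to the weaker polynomial growth bounds on $f$ and $\nu$. First, the estimate $\|f(t-u)\|\leq M(1+|t|)^{b}(1+u)^{b}$ combined with \eqref{rad} reduces well-definedness, continuity, and the growth bound $\|F(t)\|_{Y}\leq M'(1+|t|)^{b}$ to the convergence of $\int_{0}^{\infty}u^{\beta-1}(1+u)^{b}/(1+u^{\gamma})\,du$, which is ensured by $\beta>0$ at the origin and by $b<\gamma-\beta$ at infinity. Next, let $(P_{k})$ be a sequence of trigonometric polynomials witnessing $f\in e-(x^{\alpha},t^{-a})-B^{{\mathcal P}_{\cdot}}({\mathbb R}:X)$, and set $F_{k}(t):=\int_{-\infty}^{t}R(t-s)P_{k}(s)\,ds$. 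By \cite[Proposition 2.6.11]{nova-mono} each $F_{k}$ is Bohr almost periodic; since \eqref{szszn} gives $\|{\mathbb F}(t,\cdot)\|_{P_{t}}=t^{-a}\|\nu\|_{L^{p}([-t,t])}\leq M_{0}^{1/p}$ and the function $\phi(x)=x^{\alpha}$ satisfies the sub-additivity and continuity hypotheses of observation (BE2), the task reduces to showing
\begin{align*}
\lim_{k\to\infty}\limsup_{t\to\infty}\frac{1}{2t^{ap}}\int_{-t}^{t}\|F(s)-F_{k}(s)\|^{\alpha p}\nu^{p}(s)\,ds=0.
\end{align*}

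For this key estimate I would write $F(s)-F_{k}(s)=\int_{0}^{\infty}R(z)[f(s-z)-P_{k}(s-z)]\,dz$ and factor the kernel as $\|R(z)\|\leq M\cdot\tfrac{z^{\beta-1}(1+z)^{\zeta}}{1+z^{\gamma}}\cdot(1+z)^{-\zeta}$, choosing $\zeta$ in the non-empty interval $(1/(\alpha p),\,1/(\alpha p)+\gamma-\beta)$; the hypothesis $\alpha p(\beta-1)/(\alpha p-1)>-1$ (or $\beta=1$ if $\alpha p=1$) is exactly what guarantees that the first factor belongs to $L^{\alpha p/(\alpha p-1)}((0,\infty))$. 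Hölder's inequality with exponents $\alpha p/(\alpha p-1)$ and $\alpha p$ then yields the pointwise majorant
\begin{align*}
\|F(s)-F_{k}(s)\|^{\alpha p}\leq M_{1}\int_{0}^{\infty}(1+z)^{-\alpha p\zeta}\|f(s-z)-P_{k}(s-z)\|^{\alpha p}\,dz,
\end{align*}
and integrating against $\nu^{p}(s)\,ds$ over $[-t,t]$, interchanging the order of integration, substituting $w=s-z$, and using that $\nu$ is monotonically decreasing (so $\nu^{p}(w+z)\leq\nu^{p}(w)$ for $z>0$) bound the quantity in the displayed limit by a constant multiple of $t^{-ap}\int_{0}^{\infty}(1+z)^{-\alpha p\zeta}g_{k}(t+z)\,dz$, where $g_{k}(R):=\int_{-R}^{R}\|f(w)-P_{k}(w)\|^{\alpha p}\nu^{p}(w)\,dw$.

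The principal obstacle is the final bookkeeping: since $f$ and $\nu$ are no longer Stepanov-bounded, $g_{k}(R)$ grows at worst polynomially rather than linearly, and the uniform-in-$k$ linear bound available in the proof of Proposition \ref{stanb} is lost. The approximation hypothesis nevertheless supplies, for each $k$, a threshold $R_{k}$ and a constant $C_{k}\to 0$ with $g_{k}(R)\leq C_{k}R^{ap}$ for $R\geq R_{k}$; combining this with the elementary inequality $(t+z)^{ap}\leq t^{ap}(1+z)^{ap}$ (valid for $t\geq 1$) reduces the remaining estimate to the convergence of $\int_{0}^{\infty}(1+z)^{ap-\alpha p\zeta}\,dz$, which forces the sharper lower bound $\zeta>a/\alpha+1/(\alpha p)$. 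The admissibility of such a $\zeta$ inside the earlier window amounts to a tacit compatibility constraint between the exponents $(a,\alpha,p,\beta,\gamma)$, and once it is in place the residual estimate, including the treatment of the contribution from $R<R_{k}$ and the passage from the almost-periodic approximants $F_{k}$ back into the metrical class via observation (BE2), mirrors verbatim the long computation carried out in \cite[Proposition 4.1]{multi-besik}.
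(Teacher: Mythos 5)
Your overall strategy is exactly the one the paper intends: the paper gives no argument for Proposition \ref{stan1b} beyond the remark that ``the proof is almost the same as the proof of Proposition \ref{stanb} and the above-mentioned results from \cite{multi-besik}'', and your reconstruction --- well-definedness, continuity and the growth bound $\|F(t)\|_{Y}\leq M'(1+|t|)^{b}$ from \eqref{rad} together with $b<\gamma-\beta$; reduction via (BE2) and the almost periodicity of $F_{k}$ to the limit relation \eqref{dejanb}; the H\"older splitting of the kernel with $\zeta\in(1/(\alpha p),\,1/(\alpha p)+\gamma-\beta)$ --- is faithful to the proof of Proposition \ref{stanb} and is correct as far as it goes.

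The gap is the one you yourself flag, and it is genuine rather than cosmetic. After Fubini and the substitution $w=s-z$ one must control $t^{-ap}\int_{0}^{\infty}(1+z)^{-\alpha p\zeta}\,g_{k}(t+z)\,dz$, and the Besicovitch hypothesis only yields $g_{k}(R)\leq C_{k}R^{ap}$ for large $R$ with $C_{k}\to 0$; since $g_{k}(R)$ may genuinely be of exact order $R^{ap}$, mere convergence of the tail $\int_{t}^{\infty}(1+z)^{-\alpha p\zeta}z^{ap}\,dz$ already forces $\alpha p\zeta>ap+1$, i.e.\ $\zeta>a/\alpha+1/(\alpha p)$, and compatibility with the upper end of the window requires $a/\alpha<\gamma-\beta$ --- a relation that does not follow from the stated hypotheses ($a,\alpha$ are only constrained by $a>0$, $\alpha>0$, $\alpha p\geq 1$, $ap\geq 1$). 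Splitting the $z$-integral at $z=t$ improves matters only partially: the near part $z\in[0,t]$ needs just $\zeta>1/(\alpha p)$ and carries the factor $C_{k}\to 0$, but the far tail still requires either $\zeta>a/\alpha+1/(\alpha p)$ (via $g_{k}(t+z)\lesssim C_{k}(t+z)^{ap}$) or a $b$-dependent exponent condition (via the pointwise bounds $\|f(w)\|\lesssim(1+|w|)^{b}$, $\nu(w)\lesssim(1+|w|)^{b/\alpha}$), and neither alternative is guaranteed to fit below $1/(\alpha p)+\gamma-\beta$ for all admissible $(a,\alpha,p,b,\beta,\gamma)$ (e.g.\ $\alpha=p=1$, $b=1/2$, $\gamma-\beta=3/5$, $a=1$ defeats both). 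So your proof, as written, is conditional on an unstated compatibility constraint among the exponents; either that constraint must be added to the hypotheses, or a different treatment of the far tail must be supplied. Since the paper omits the proof entirely and the cited computation in \cite{multi-besik} is performed under Stepanov-boundedness (where $g_{k}$ grows linearly and the issue disappears), you cannot resolve this by ``mirroring verbatim'' that computation; this is the one point at which the argument does not close.
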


As mentioned in a great number of our recent research articles, Proposition \ref{stojke}, Proposition \ref{stanb} and Proposition \ref{stan1b} can be applied to a large class of the abstract (degenerate) Volterra integro-differential equations without initial conditions. Here we will only note that we can apply these results in the analysis of the existence and uniqueness of metrical Besicovitch-$p$-almost periodic type solutions of 
the initial value problems with constant coefficients
\[
\begin{array}{l}
D_{t,+}^{\gamma}u(t,x)=\sum_{|\alpha|\leq k}a_{\alpha}f^{(\alpha)}(t,x)+f(t,x),\ t\in {\mathbb R},\ x\in \mathbb{R}^n
\end{array}
\] 
in the space $L^{p}(\mathbb{R}^n),$ where 
$\gamma \in (0,1),$
$D_{t,+}^{\gamma}u(t)$ denotes the Weyl-Liouville fractional derivative of order $\gamma$ and
$1\leq p<\infty .$
See also \cite{nova-mono} for many other applications of this type. 

The statement of \cite[Theorem 4.5]{multi-besik}, which concerns the existence and uniqueness of Besicovitch-$p$-almost periodic solutions of the abstract nonautonomous
differential equations of first order, can be simply reformulated in our new context, with the use of the same pivot Banach spaces $P_{t}=L^{p}_{\nu}([-t,t])$ for all $t>0.$

For simplicity, we will not consider here the invariance of various classes of metrical Stepanov almost periodic type functions and metrical Weyl almost periodic type functions under the actions of infinite convolution products; see \cite[Subsection 2.1]{metrical-stepanov} and  \cite[Subsection 3.1]{metrical-weyl} for some results obtained in this direction. Because of a certain similarity with our previous research studies, we will not reconsider here the classical solutions of the inhomogeneous wave equation given by the famous d'Alembert formula (the Poisson formula; the  Kirchhoff formula), as well; see \cite{nova-selected} for more details about the subject.

\section{Conclusions and final remarks}\label{toi}

In this paper, we have investigated metrical approximations by trigonometric polynomials and $\rho$-periodic type functions, providing also certain applications to
the abstract Volterra integro-differential equations and the partial differential equations. In this section, we will provide several comments and final remarks about the introduced notion and the obtained results.

First of all, we would like to emphasize that, in our definitions given in Section \ref{docolord}, we have taken the norm in $Y$ of certain terms and assumed that  $P \subseteq [0,\infty)^{\Lambda}.$ Without going into full details, we will only note here that we can also consider the following notion: Assume that $\emptyset \neq \Lambda \subseteq {\mathbb R}^{n},$ 
$\phi_{Y} : Y \rightarrow Y,$ ${\mathbb F} : \Lambda \rightarrow (0,\infty)$,
$P_{Y} \subseteq Y^{\Lambda},$ the zero function belongs to $P_{Y}$, and 
${\mathcal P}_{Y}=(P_{Y},d_{Y})$ is a pseudometric space.
\begin{itemize}\index{function!strongly $(\phi_{Y},{\mathbb F},{\mathcal B},{\mathcal P}_{Y})$-almost periodic}\index{function!semi-$(\phi_{Y},\rho,{\mathbb F},{\mathcal B},{\mathcal P}_{Y})$-periodic}\index{function!semi-$(\phi_{Y},\rho_{j},{\mathbb F},{\mathcal B},{\mathcal P}_{Y})_{j\in {\mathbb N}_{n}}$-periodic}
\item[(i)] We say that the function $F(\cdot;\cdot)$ is strongly $(\phi_{Y},{\mathbb F},{\mathcal B},{\mathcal P}_{Y})$-almost periodic (semi-$(\phi_{Y},\rho,{\mathbb F},{\mathcal B},{\mathcal P}_{Y})$-periodic, semi-$(\phi_{Y},\rho_{j},{\mathbb F},{\mathcal B},{\mathcal P}_{Y})_{j\in {\mathbb N}_{n}}$-periodic) if and only if for each $B\in {\mathcal B}$ there exists a sequence $(P_{k}^{B}({\bf t};x))$ of trigonometric polynomials ($\rho$-periodic functions, $(\rho_{j})_{j\in {\mathbb N}_{n}}$-periodic functions)
such that 
$$
\lim_{k\rightarrow +\infty}\sup_{x\in B} \Bigl\|{\mathbb F}(\cdot) \phi\Bigl(P_{k}^{B}(\cdot;x)-F(\cdot;x)\Bigr)\Bigr\|_{P_{Y}}=0.
$$ \index{function!Bohr $(\phi_{Y},{\mathbb F},{\mathcal B},\Lambda',\rho,{\mathcal P}_{Y})$-almost periodic}
\item[(ii)] We say that the function $F(\cdot;\cdot)$ is Bohr $(\phi_{Y},{\mathbb F},{\mathcal B},\Lambda',\rho,{\mathcal P}_{Y})$-almost periodic if and only if for every $B\in {\mathcal B}$ and $\epsilon>0$
there exists $l>0$ such that for each ${\bf t}_{0} \in \Lambda'$ there exists ${\bf \tau} \in B({\bf t}_{0},l) \cap \Lambda'$ such that, for every ${\bf t}\in \Lambda$ and $x\in B,$ there exists an element $y_{{\bf t};x}\in \rho (F({\bf t};x))$ such that
\begin{align*}
\sup_{x\in B} \Biggl\| {\mathbb F}(\cdot) \phi\Bigl(F(\cdot+{\bf \tau};x)-y_{\cdot;x}\Bigr)\Biggr\|_{P_{Y}} \leq \epsilon .
\end{align*}\index{function!$(\phi_{Y},{\mathbb F},{\mathcal B},\Lambda',\rho,{\mathcal P}_{Y})$-uniformly recurrent}
\item[(iii)] We say that the function 
$F(\cdot;\cdot)$ is $(\phi_{Y},{\mathbb F},{\mathcal B},\Lambda',\rho,{\mathcal P}_{Y})$-uniformly recurrent if and only if for every $B\in {\mathcal B}$ 
there exists a sequence $({\bf \tau}_{k})$ in $\Lambda'$ such that $\lim_{k\rightarrow +\infty} |{\bf \tau}_{k}|=+\infty$ and that, for every ${\bf t}\in \Lambda$ and $x\in B,$ there exists an element $y_{{\bf t};x}\in \rho (F({\bf t};x))$ such that 
\begin{align*}
\lim_{k\rightarrow +\infty}\sup_{x\in B} \Biggl\| {\mathbb F}(\cdot)\phi\Bigl( F(\cdot+{\bf \tau}_{k};x)-y_{\cdot;x}\Bigr)\Biggr\|_{P_{Y}}=0.
\end{align*}
\end{itemize}

It seems very plausible that many structural results from Section \ref{docolord} can be reformulated for these classes of functions. Details can be left to the interested readers.

It is our strong belief that this paper is only a beginning of serious investigations of metrical approximations of functions and their applications. At the end of paper, we will only mention a few more topics not considered here:

1. In this paper, we have not considered extensions of functions obtained as metrical approximations by trigonometric polynomials and $\rho$-periodic type functions.

2. There are several different ways to introduce the notion of a function of bounded
variation in 
multiple dimensions; for more details about this important subject, we refer the reader to the master thesis \cite{brener} by S. Breneis and references cited therein.
We will analyze multi-dimensional (metrical) almost periodic type functions in variation and multi-dimensional (metrical) H\"older almost periodic type functions somewhere else.

3. In our analysis, we do not require that the set $\Lambda$ is unbounded. If the set $\Lambda$ is bounded, the situation is completely without control and a series of further investigations can be carried out; in both situations, we need to further explore several new classes of functions obtained by specifying the considered pseudometric spaces ${\mathcal P}$.

4. Let us recall that 
the composition principles for the metrical Stepanov $c$-almost periodic type functions and the metrical Weyl $c$-almost periodic type functions have been considered in \cite[Theorem 2.6]{metrical-stepanov} and \cite[Theorem 3.7]{metrical-weyl}, respectively; a composition principle for Besicovitch-$p$-almost
periodic type functions has been deduced in \cite[Theorem 2.10]{multi-besik} following the approach of M. Ayachi and J. Blot \cite[Lemma 4.1]{ayachi}. It is also worth noting that
the notion of a $(c,\Lambda')$-uniformly recurrent function in $p$-variation can be also introduced ($1\leq p<+\infty$); then an analogue of the composition principle stated in \cite[Theorem 2.28]{sds} can be proved under certain very restrictive assumptions (the interested reader may try to reconsider the results established in our recent research study \cite{sds} by M. T. Khalladi et al. for $c$-almost periodic type functions in $p$-variation and H\"older $c$-almost periodic type functions, which can defined in a similar fashion). We will not analyze the composition principles for the introduced classes of functions and related applications to the semilinear Cauchy problems here.

\end{document}